\theoremstyle{definition}
\newtheorem{definition}{Definition}[section]
\theoremstyle{plain}
\newtheorem{theorem}[definition]{Theorem}
\newtheorem{corollary}[definition]{Corollary}
\newtheorem{lemma}[definition]{Lemma}
\newtheorem{proposition}[definition]{Proposition}
\theoremstyle{remark}
\newtheorem{remark}[definition]{Remark}
\newcommand{\Do}{\partial\!\!\!/}
\begin{document}

\title{ \(\mathfrak{sl}(n,\mathbf{H})\)-Current Algebras on \(S^3\) }
\author{Tosiaki Kori 
\\Department of Mathematics\\
Graduate School 
of Science and Engineering\\
Waseda University,\\Tokyo 169-8555, Japan
\\email{ kori@waseda.jp}}
\date{}
\maketitle

\begin{abstract}
	A current algebra is a certain central extension of the Lie algebra of smooth mappings of a given manifold into a finite dimensional Lie algebra.     A loop algebra or an affine Kac-Moody algebra is the simplest example where the manifold is \(S^1\).    In this paper we investigate a central extension of the Lie algebra that is generated by the \(\mathfrak{sl}(n,\mathbf{C})\)-valued  Laurent polynomial type harmonic spinors over \(S^3\).       We introduce a triple of non-trivial 2-cocycles on the space of half spinors ( regarded as quaternion-valued functions ) over \(S^3\) with the aid of the basic vector fields  on \(S^3\), and extend them to 2-cocycles on the Lie algebra 
of smooth mappings from \(S^3\) to \(\mathfrak{gl}(n,\mathbf{H})\).   Then we take its central extension.      The Euler  vector field acts on this Lie algebra as an outer derivation, so 
 we obtain a second extension.   This is what we call the \(\mathfrak{gl}(n,\mathbf{H}))\)-{\it current algebra}.      As a submodule it contains the set of 
\(\mathfrak{sl}(n,\mathbf{C})\)-valued  Laurent polynomial type harmonic spinors on \(S^3\).   
  The Lie subalgebra  thus generated is the \(\mathfrak{sl}(n,\mathbf{H})\)-{\it current algebra}.
   The root space decomposition of this \(\mathfrak{sl}(n,\mathbf{H})\)-current algebra 
 is obtained, and the Chevalley generators are given.  
 \end{abstract}

2010 Mathematics Subject Classification.    81R10, 17B65,  17B67, 22E67.\\
{\bf Key Words }    Infinite dimensional Lie algebras,  Current algebra, 
	Affine Lie algebra, Quaternion analysis.

\medskip

\section{Introduction}

A current algebra is a certain central extension of the Lie algebra of smooth mappings of a given manifold into a finite dimensional Lie algebra \cite{K-W, M}.     A loop algebra is the simplest example where the manifold is \(S^1\).     Loop algebras appear in the simplified model of quantum field theory where the space is one-dimensional and many important facts in the representation theory of loop algebras  were first discovered by physicists.        A loop algebra is also called  an affine Lie algebra and the highest weight theory of finite dimensional Lie algebra was extended to affine Lie algebras, \cite{C, K, M, P-S, W}.      In this paper we shall investigate a  generalization of affine Lie algebras to the Lie algebra of mappings from  \(S^3\)  to a Lie algebra.     As an affine Lie algebra is a central extension of the smooth mappings from \(S^1\) to the complexification of a simple Lie algebra, so our objective is a central extension of the Lie algebra of smooth mappings from \(S^3\) to the quaternification of a simple Lie algebra.   

First we give a brief review of the central extension of the loop algebra after Kac's book \cite{K}.   Let \(L=\mathbf{C}[t,t^{-1}]\) be the Laurent polynomials in \(t\).   We define a \(\mathbf{C}\)-valued function \(c_0\) on \(L\times L\) by 
\[c_0(P,Q)=\frac{1}{2\pi}\int_{S^1}\,\frac{d P}{dt}(t)\cdot Q(t)\,dt.\]
\(c_0\) satisfies the 2-cocycle properties.   
Let \(\mathfrak{g}\) be a simple finite dimensional Lie algebra.   We consider the loop algebra \(L\mathfrak{g}=L\otimes_{\mathbf{C}}\mathfrak{g}\).    This is an infinite dimensional complex Lie algebra with the bracket
\[[P\otimes x,\,Q\otimes y]=PQ\otimes\,[x,y]\,,\quad P,Q\in L,\,x,y\in\mathfrak{g}.\]
Let \((\cdot\vert\cdot)\) be a non-degenerate symmetric bilinear  \(\mathbf{C}\)-valued form on \( \mathfrak{g}\).   We define a \(\mathbf{C}\)-valued 2-cocycle on the Lie algebra \(L\mathfrak{g} \) by 
\[c(P\otimes x,\,Q\otimes y)\,=\, (x\vert y)c_0(P,Q)\,.\]
Let \(\widehat{L\mathfrak{g}} =L\mathfrak{g} \oplus \mathbf{C}a\) (direct sum of vector spaces) be the extension associated to the cocycle \(c\).   The Euler derivation \(t\frac{d}{dt}\) acts on \(\widehat{L\mathfrak{g}} \) as an outer derivation and kills \(c\).    Then  \(\widehat{\mathfrak{g}}\) is  the Lie algebra that is obtained by adjoining a derivation \(d\) to \(\widehat {L\mathfrak{g}} \).    \(\widehat{\mathfrak{g}}\) is a complex vector space 
\[\widehat{\mathfrak{g}}=L\mathfrak{g} \oplus \mathbf{C}a\oplus \mathbf{C}d\]
with the bracket defined as follows \((x,y\in \mathfrak{g},\,\lambda,\nu,\lambda_1,\nu_1\in \mathbf{C})\):
\begin{eqnarray*}
[t^k\otimes x\oplus\lambda c\oplus \nu d, \,t^l\otimes y\oplus\lambda_1c\oplus \nu_1 d\,]
=
(t^{k+l}\otimes[x,y]+\nu lt^l\otimes y-\nu_1 kt^k\otimes x)\oplus k\delta_{k,-l}(x\vert y)c\,.
\end{eqnarray*}
In brief an affine Lie algebra is a central extension of a simple Lie algebra with the Laurent polynomial coefficients.

To develop an analogous theory for the current algebra on \(S^3\) we  consider  the algebra of  {\it Laurent polynomial type harmonic spinors} on \(S^3\).   Then we introduce a triple of  2-cocycles on this algebra.   For this purpose we prepare 
 a rather long introduction to our previous results on analysis of quaternion valued functions ( spinors ) on \(\mathbf{R}^4\) that were developed in \cite{ F,G-M, Ko1, Ko2} and \cite{K-I}, since these subjects seem not to be familiar.    
   
 The space of spinors \(\Delta=\mathbf{C}^2\otimes_{\mathbf{C}}\mathbf{C}^2\) gives an irreducible representation of the complexification of the Clifford algebra;  
\( {\rm Clif }^c_4={\rm Clif }_4\otimes_{\mathbf{C}}\mathbf{C}\).    The Dirac matrices 
\begin{equation*}
\gamma_k\,=\,\left(\begin{array}{cc}0&-i\sigma_k\\ i\sigma_k&0\end{array}\right)\,,\quad k=1,2,3,\quad
\gamma_4\,=\,\left(\begin{array}{cc}0&-I\\ -I&0\end{array}\right)\,,
\end{equation*}
where \(\sigma_k\)'s are Pauli matrices, give the generators of \(\,{\rm Clif }^c_4\,\simeq\, {\rm End} (\Delta)\).
Let 
\(S= \mathbf{C} ^2\times \Delta\)  be the spinor bundle and let \(S^{\pm}\) be the half spinor bundles .      The Dirac operator is given by  
\begin{equation*}
\mathcal{D}=\,-\,\frac{\partial}{\partial x_1}\gamma_4\,-\,\frac{\partial}{\partial x_2}\gamma_3\,-\,\frac{\partial}{\partial x_3}\gamma_2\,-\,\frac{\partial}{\partial x_4}\gamma_1\,:\,C^{\infty}(M, S)\longrightarrow\,
C^{\infty}(M, S)\,.
\end{equation*}  
The half spinor Dirac operator  $D=\mathcal D\vert S^+$  has the polar decomposition:
\begin{equation*}
D = \gamma_+ \left( \frac{\partial}{\partial n} - \Do \right) ,
\end{equation*}
with the tangential (nonchiral) Dirac operator $\Do\,$ on \(S^3\subset\mathbf{C}^2\):
\begin{equation*}
\Do = - \left[ \sum^{3} _{i = 1} \left( \frac{1}{|z|} \theta_i \right) \cdot \nabla_{\frac{1}{|z|} \theta_i} \right]
= \frac{1}{|z|} 
\begin{pmatrix}
-\frac{1}{2} \theta & \,e_+ \\[0.2cm]
-e_- &\, \frac{1}{2} \theta
\end{pmatrix}.
\end{equation*}
In the above  \( \gamma_+: S^+  \longrightarrow S^-  \) is the Clifford multiplication by the radial vector $\frac{\partial}{\partial n}\,\), and  \(\theta,\,e_+\,\) and \(e_-\,\) are the basic vector fields on \(\{|z|= 1\}\simeq S^3\,\).   

The tangential Dirac operator \(\Do \) on \(S^3\) is a self adjoint elliptic differential operator.     
The eigenvalues of \(\Do\) are  \(\{\frac{m}{2},\,\,-\frac{m+3}{2}\,;\,m=0,1,\cdots \} \) with multiplicity \((m+1)(m+2)\),     and we have an explicitly written polynomial formula for eigenspinors \(\left\{ \phi^{+(m,l,k)},\,\phi^{-(m,l,k)}\right\}_{0\leq l\leq m,\,0\leq k\leq m+1}\)  corresponding to the eigenvalue \(\frac{m}{2}\) and  \(-\frac{m+3}{2}\,\) respectively that give rise to a complete orthonormal system in \(L^2(S^3, S^+)\), \cite{Ko1, Ko2}.     A spinor \(\phi\) on a domain \(G\subset \mathbf{C}^2\) is called a {\it harmonic spinor} on \(G\) if \(D\phi=0\).
Each \(\phi^{+(m,l,k)}\) is extended to a harmonic spinor on \(\mathbf{C}^2\), while each \(\phi^{-(m,l,k)}\) is extended to a harmonic spinor on \(\mathbf{C}^2\setminus \{0\}\).   Every harmonic spinor \(\varphi\) on \(\mathbf{C}^2\setminus \{0\}\) has a Laurent series expansion ( or a Fourier series expansion when restricted to \(S^3\) ) by the basis \(\phi^{\pm(m,l,k)}\):
 \begin{equation*}
 \varphi(z)=\sum_{m,l,k}\,C_{+(m,l,k)} \phi^{+(m,l,k)}(z)+\sum_{m,l,k}\,C_{-(m,l,k)}\phi^{-(m,l,k)}(z).
 \end{equation*}
   The set of spinors of Laurent polynomial type is denoted by   \(\mathbf{C}[\phi ^{\pm}] \).   
In \cite{K-I} we proved that the restriction of 
 \( \mathbf{C}[\phi^{\pm}\,]\) to \(S^3\) becomes an associative subalgebra of \(S^3\mathbf{H}=Map(S^3,\mathbf{H})\).   We must note that  \( \mathbf{C}[\,\phi^{\pm}\,]\) itself is not an algebra.      

There is an identification of \(\mathbf{H}\) with \(\mathbf{C}^2\) as \(\mathbf{C}\)-vector spaces:
\(\mathbf{H}\ni x=z_1+jz_2\,\longleftrightarrow\,\left(\begin{array}{c} z_1\\z_2\end{array}\right)\in \mathbf{C}^2\,\), 
that yields an identification of the \(\mathbf{H}\)-valued functions \(\,S^3\mathbf{H}\,\) and the even half spinors  \(\,C^{\infty}(S^3, S^+)\); 
\( S^3\mathbf{H}\ni u+jv\longleftrightarrow\,\phi=\begin{pmatrix} u\\ v \end{pmatrix}\in C^{\infty}(S^3, S^+)\,\).   

We have the following \(\mathbf{R}\)-bilinear bracket on  \(S^3\mathbf{H}\):
\begin{equation}\label{1Liebrintr}
\bigl[\,u _1+jv_1, \,u _2+jv_2\,\bigr]\,=\, (v_1 \Bar{v} _2 - \Bar{v}_1 v_2)+\,j((u _ 2 - \Bar {u} _2 ) v _1 - (u _1 - \Bar {u} _1) v_2)\,.\end{equation}
 Correspondingly,  
 \begin{equation}\label{Liebrintr}
\bigl[\,\phi _1\, , \,\phi _2\,\bigr]=
 \begin{pmatrix} \,v_1 \Bar{v} _2 - \Bar{v}_1 v_2 \,\\[0.2cm]
\,(u _ 2 - \Bar {u} _2 ) v _1 - (u _1 - \Bar {u} _1) v_2\, \end{pmatrix} \,,
\end{equation}   
for even half spinors   $\phi _1 = \begin{pmatrix} u_1\\ v_1 \end{pmatrix}$ and $\phi _2 = \begin{pmatrix} u_2\\ v_2 \end{pmatrix}$.
These brackets are \(\mathbf{R}\)-bilinear antisymmetric form and satisfies the Jacobi identity.

Hitherto we prepared the space of basic fields  \(S^3\mathbf{H}\simeq C^{\infty}(S^3,\,S^+)\) and \(\mathbf{C}[\phi^{\pm}]\) that play the role of coefficients of current algebras.   They are \(\mathbf{H}\)-modules or \(\mathbf{C}\)-modules endowed with real Lie brackets.  
 To convince the readers that these objects prepare a good background to develop our subjects we shall introduce  
 {\it quaternion Lie algebras}.     
 
 A {\it quaternionic structure} on  a  \(\mathbf{C}\)-module \(V\) is a conjugate linear map \(J:\,V\mapsto V\)  that  satisfies  the relation \(J^2\,=\,-\,I\,\), \cite{A}.   This is equivalent to the fact that \(V\) is a \(\mathbf{H}\)-module \(V=\mathbf{H}\otimes_{\mathbf{C}} V_0=V_0+JV_0\) for a 
 \(\mathbf{C}\)-submodule \(V_0\).   There is an involution \(\sigma\) on \(V\) defined by 
\(\sigma(x+Jy)=x-Jy\), \(x,y\in V\).   Then  \(V_0\), respectively \(JV_0\), is the eigenspace of \(\sigma\) corresponding to the eigenvalue \(+1\), respectively \(-1\).     A typical example is  \(\mathfrak{gl}(n,\mathbf{H})=\mathbf{H}\otimes_{\mathbf{C}} \mathfrak{gl}(n,\mathbf{C})=\mathfrak{gl}(n,\mathbf{C})+J\mathfrak{gl}(n,\mathbf{C})\).    Let \(V=V_0+JV_0\) be a \(\mathbf{H}\)-module.   We call a \(\mathbf{C}\)-module \(W\) a {\it semi-submodule} of \(V\) if \(W\) is invariant under \(\sigma\), or equvalently, \(W\) respects the \(\mathbf{Z}_2\)-gradation; \(W=W\cap V_0+W\cap JV_0\).      \(W\) is not necessarily \(J\)-invariant.   For example 
\(\mathfrak{sl}(n,\mathbf{H})\) is a semi-submodule of \(\mathfrak{gl}(n,\mathbf{H})\) which is not a \(\mathbf{H}\)-submodule.    While \(\mathfrak{sl}(n,\mathbf{C})+J\mathfrak{sl}(n,\mathbf{C})\) is \(J\)-invariant and becomes a \(\mathbf{H}\)-submodule of \(\mathfrak{gl}(n,\mathbf{H})\).    
        
The definition of a {\it quaternion Lie algebra} is as follows:
 \begin{definition}\label{ql}
 \begin{enumerate}
 \item
Let  \(\mathfrak{g}\) be a semi-submodule of a quaternion module \(V=V_0+JV_0\).   The vector space \(\mathfrak{g}\) endowed with an operation \(\mathfrak{g}\times \mathfrak{g}\longrightarrow \mathfrak{g}\), denoted by the bracket: \((X,Y)\longrightarrow [X,Y]\) for any \(X,Y\in\mathfrak{g}\), is called a {\it quaternion Lie algebra } if the following properties are satisfied:
 \begin{enumerate}
 \item
 The bracket operation is \(\mathbf{R}\)-bilinear.
 \item
 \[
 [\,X\,,\,Y\,]\,+\, [\,Y\,,\,X\,] =0 \qquad\mbox{ for all \(X,Y\in \mathfrak{g}\)}.\]
 \item
 \[[\,X\,,\,[\,Y,\,Z]\,]\,+\,[\,Y\,,\,[\,Z,\,X]\,]\,+\,[\,Z\,,\,[\,X,\,Y]\,]\,=\,0\qquad \forall X,Y,Z\in \mathfrak{g}.\]
 \item
 \[\sigma [\,X\,,\,Y\,]\,=\,[\,\sigma X\,,\,\sigma Y\,\,] \qquad\mbox{ for all \(X,Y\in \mathfrak{g}\)}.\]
 \end{enumerate}
 \item
 Let  \(\mathfrak{g}_0\) be a complex Lie algebra.   Let  \(\mathfrak{g}\subset V_0+JV_0\) be a 
 quaternion Lie algebra such that \(\mathfrak{g}_0=\mathfrak{g}\cap V_0\).     If the Lie algebra structure on  \(\mathfrak{g}\) restricts  to    
\(\mathfrak{g}_0\)   
we call  \(\mathfrak{g}\)  {\it the quaternification of} \(\,\mathfrak{g}_0\).
\end{enumerate}
\end{definition}

\(\mathfrak{gl}(n,\mathbf{H})= \mathbf{H}\otimes_{\mathbf{C}}\mathfrak{gl}(n,\mathbf{C})\) is the quaternification of \(\mathfrak{gl}(n,\mathbf{C})\).           We shall often identify \(\mathfrak{gl}(n,\mathbf{H})\) with \(MJ(2n,\mathbf{C})=\{ Z\in \mathfrak{gl}(2n,\mathbf{C});\,JZ=\overline ZJ\} \), see (\ref{mj2nc}).    These are isomorphic as \(\mathbf{C}\)-modules.   It is easy to verify that \(MJ(2n,\mathbf{C})\)  is a quaternion Lie algebra. The involution \(\sigma\) on \(MJ(2n, \mathbf{C})\) is the corresponding matrix notation of \(A+JB\longrightarrow A-JB\) of  \(\mathfrak{gl}(n,\mathbf{H})\).

    In section 2 we shall investigate the quaternion Lie algebra \(S^3\mathfrak{gl}(n,\mathbf{H})=Map(S^3,\mathfrak{gl}(n,\mathbf{H}))\), and construct central extensions of \(S^3\mathfrak{gl}(n,\mathbf{H})\).
  
  For a  
  \(\mathbf{C}\)-vector space \(V\) with a real Lie algebra bracket \(\,[\,\cdot\,,\,\cdot\,]_V\,\),  
 a {\it central extension} of   \((V,\,[\,\cdot\,,\,\cdot\,]_V\,)\) consists of a \(\mathbf{C}\)-vector space 
\(W=V\oplus Z\) ( direct sum ) with a real linear bracket \(\,[\,\cdot\,,\,\cdot\,]_W\) over \(W\) such that 
\[\,Z\,\subset Z(W)=\{w\in W\,:\,[w,W]_W=0\,\}\,,\]
and such that  \(\,[\,\cdot\,,\,\cdot\,]_W\)  restricts to  \(\,[\,\cdot\,,\,\cdot\,]_V\).   
 
  Let \(\mathfrak{gl}(n,\mathbf{H})\) be the algebra of \(n\times n\)-matrices with entries in \(\mathbf{H}\).     
Then the \(\mathbf{C}\)-vector space \(S^3\mathfrak{gl}(n,\mathbf{H})=S^3\mathbf{H}\otimes_{\mathbf{C}}  \mathfrak{gl}(n,\mathbf{C})\)  is equiped with a real Lie algebra structure given by:
  \[
 [\,\phi \otimes E_{ij}\, , \,\psi  \otimes E_{kl}\,]_{ S^3\mathfrak{gl}(n,\mathbf{H})} =  ( \phi\cdot\psi ) \otimes \delta_{jk}E_{il} \,
- (\psi \cdot \phi ) \otimes \delta_{il}E_{kj}
 , 
\]
where \(\phi,\,\psi\,\in S^3\mathbf{H}\, \), and  \(E_{ij}\)  is the \(n\times n\)-matrix with entry \(1\) at \((i,j)\)-place and \(0\) otherwise.    
 \(S^3\mathfrak{gl}(n,\mathbf{H})\) becomes a quaternion Lie algebra and it is the quaternification of  \(S^3\mathfrak{gl}(n,\mathbf{C})\):
   \[S^3\mathfrak{gl}(n,\mathbf{H})=\mathbf{H}\otimes_\mathbf{C}S^3\mathfrak{gl}(n,\mathbf{C})=\,S^3\mathfrak{gl}(n,\mathbf{C})+J\,S^3\mathfrak{gl}(n,\mathbf{C})\,.\]
      The involution \(\sigma\) is given by
 \(\sigma (\phi\otimes X)=(\sigma\phi)\otimes X\,\).   

We proceed to the central extension of the quaternion Lie algebra \(S^3\mathfrak{gl}(n,\mathbf{H})\).   
Our 2-cocycles on \(C^{\infty}(S^3, S^+)\) are defined as follows.   We put 
\[\Theta_k\phi=\,\frac{1}{2}\,\left(\begin{array}{c}\,\theta_k\, u\\[0.3cm] \,\theta_k\, v\end{array}\right),\, k=0,1,2,\] 
for \(\phi = \begin{pmatrix} u\\ v \end{pmatrix}\).    Where  
\(\theta_0=\sqrt{-1}\theta,\quad \theta_1=e_++e_-,\quad \theta_2=\sqrt{-1}(e_+-e_-)\,\).    We introduce the following three non-trivial real valued 2-cocycles \(c_k,\,k=0,1,2\), on  \(\,S^3\mathbf{H}\simeq C^{\infty}(S^3, S^+)\) : 
\begin{equation}\label{2cocycleintr}
c_k(\phi_1,\phi_2)\,=\,
\,\frac{1}{2\pi^2}\int_{S^3}\,\,tr\,(\,\Theta_k \phi_1\cdot \phi_2\,)\, d\sigma,\qquad \forall \phi_1\,,\, \phi_2\in\,C^{\infty}(S^3, S^+) \,.   
  \end{equation} 

Each 2-cocycle \(c_k\), \(k=0,1,2\), of  (\ref{2cocycleintr}) on \(S^3\mathbf{H}\,\) is extended to \(S^3\mathfrak{gl}(n,\mathbf{H})\) by the formula 
\[\tilde{c}_k(\phi\otimes X,\,\psi\otimes Y)\,=\,(X \vert Y)\,c_k(\phi,\psi),\quad \phi,\,\psi\in S^3\mathbf{H},\,\,X,Y\in \mathfrak{gl}(n,\mathbf{C}),\]
where \((X\vert Y)=Trace(XY)\) is the Killing form of \(\mathfrak{gl}(n,\mathbf{H})\).   
Then we have the associated central extension:
 \[\,
 S^3\mathfrak{gl}(n,\mathbf{H})\oplus(\oplus_{k=0}^2\mathbf{C}a_k) ,\]
 which is a quaternion Lie algebra.

 As a Lie subalgebra of \(S^3\mathfrak{gl}(n,\mathbf{H})\) we have the Lie algebra \(\mathbf{C}[\phi^{\pm}]\otimes_{\mathbf{C}}   \mathfrak{gl}(n,\mathbf{C})\) of \(\mathfrak{gl}(n,\mathbf{C})\)-valued Laurent polynomial spinors on \(S^3\,\).   \(\mathbf{C}[\phi^{\pm}]\otimes_{\mathbf{C}}   \mathfrak{gl}(n,\mathbf{C})\) is also a quaternion Lie algebra.    
 We denote  it by \(\,\widehat{\mathfrak{gl}(n,\mathbf{H})}=\mathbf{C}[\phi^{\pm}]\otimes_{\mathbf{C}}   \mathfrak{gl}(n,\mathbf{C})\).  
  \(\,\widehat{\mathfrak{gl}(n,\mathbf{H})}\) has the central extension by the 2-cocycles \(\tilde{c}_k\), \(k=0,1,2\), as well:   
   \[\widehat{\mathfrak{gl}(n,\mathbf{H})}(a)=  \mathbf{C}[\phi^{\pm}]\otimes_{\mathbf{C}}   \mathfrak{gl}(n,\mathbf{C})\oplus(\oplus_{k=0}^2\,\mathbf{C}a_k).\]
      The radial vector field \(\mathbf{n}_0=\frac{\partial}{\partial n}\)  is extended to act on \(\,\widehat{\mathfrak{gl}(n,\mathbf{H})}\) as an outer derivation.     Then we have the second extension: 
\begin{equation*}
\widehat{\mathfrak{gl}}=\mathbf{C}[\phi^{\pm}]\otimes_{\mathbf{C}} \mathfrak{gl}(n,\mathbf{C})\oplus(\oplus_{k=0}^2\mathbf{C}a_k)\oplus\mathbf{C}\mathbf{n}.
\end{equation*}

The Lie bracket of the quaternion Lie algebra  $\widehat{\mathfrak{gl}}\) is given by 
 \begin{eqnarray*}
 [\,\phi \otimes X\, , \,\psi  \otimes Y\,]_{\widehat{\mathfrak{gl}}} 
  &=&  (\phi\cdot\psi)\otimes\,(XY) - (\psi \cdot \phi) \otimes (YX)+ \, (X\vert Y) \sum_{k=0}^2\tilde{c}_k(\phi , \psi )\,a_k \, , \nonumber
\\[0,3cm] 
 [\,a_k\,, \,\phi\otimes X\,] _{\widehat{\mathfrak{gl}}}&=&0\,, \qquad  [\,\mathbf{n}\,,\,a_k\,]_{\widehat{\mathfrak{gl}}}\,=0, \quad k=0,1,2,\\[0,3cm]
  [\,\mathbf{n}, \,\phi \otimes  X\,] _{\widehat{\mathfrak{gl}}}&=&\,\mathbf{n}_0\phi \otimes  X\, .
  \end{eqnarray*}
for \(\phi,\,\psi\,\in \,\mathbf{C}[\phi^{\pm}]\, \) and any base \(X,\,Y\) of \(\mathfrak{gl}(n,\mathbf{C})\).

In section 3 we construc a quaternification of \(Map(S^3,\,\mathfrak{sl}(n,\mathbf{C}))\) and its central extensions.        Let \(\mathfrak{sl}(n,\mathbf{H})\) denote the quaternion special linear algebra, i.e. the algebra of   \(n\times n\)-matrices of entries in \(\mathbf{H}\) with zero traces.   It is in fact a quaternion Lie algebra in the sense of  our Definition \ref{ql}.         
 Then  
 \(S^3\mathfrak{sl}(n,\mathbf{H})\) is a quaternion Lie subalgebra of   \(S^3\mathfrak{gl}(n,\mathbf{H})\).
We see that 
\(\mathbf{C}[\phi^{\pm}]\otimes_{\mathbf{C}} \mathfrak{sl}(n,\mathbf{C})\) is a semi-submodule of the quaternion Lie algebra 
\(\widehat{\mathfrak{gl}(n,\mathbf{H})}=\mathbf{C}[\phi^{\pm}]\otimes_{\mathbf{C}}\mathfrak{gl}(n,\mathbf{C})\).      Then we define {\it the \(\mathfrak{sl}(n,\mathbf{H})\)-current algebra} as the Lie subalgebra of \(
\widehat{\mathfrak{gl}(n,\mathbf{H})}\,\) generated by \(\mathbf{C}[\phi^{\pm}]\otimes_{\mathbf{C}} \mathfrak{sl}(n,\mathbf{C})\), and denote it by \(\widehat{\mathfrak{sl}(n,\mathbf{H})}\).

By the 2-cocycles \(\tilde{c}_k\), \(k=0,1,2\), we have the central extension
 \begin{equation*}
 \,\widehat{\mathfrak{sl}(n,\mathbf{H})}(a)=\widehat{\mathfrak{sl}(n,\mathbf{H})}\oplus (\oplus_{k=0}^2\mathbf{C}a_k)\,.\
 \end{equation*}
 Further  we have the extension of \( \widehat{\mathfrak{sl}(n,\mathbf{H})}(a)\) by the outer derivation \(\mathbf{n}_0\): 
\begin{equation*}
\widehat{\mathfrak{sl}}\,=\, \widehat{\mathfrak{sl}(n,\mathbf{H})}\oplus (\oplus_{k=0}^2\mathbf{C}a_k)\oplus (\mathbf{C}\mathbf{n})\,.
\end{equation*}
    These are Lie subalgebras of  \(\,\widehat{\mathfrak{gl}}=\widehat{\mathfrak{gl}(n,\mathbf{H})} \oplus  (\oplus_{k=0}^2\mathbf{C}a_k)\oplus  ( \mathbf{C}\,\mathbf{n})\,\). 

Finally we discuss the root space decomposition of \(\,\widehat{\mathfrak{sl}}\,\).     Let \(\mathfrak{h}\) be a  Cartan subalgebra of \(\mathfrak{sl}(n,\mathbf{C})\).    Let \(\Delta\) be the 
set of roots and let \(\Pi\) be the set of simple roots of \(\mathfrak{sl}(n,\mathbf{C})\).   For a root \(\alpha\in\Delta\),  \(\mathfrak{g}_{\alpha}\) denotes the root space of \(\alpha\).   
Let 
\begin{equation*}
\widehat{\mathfrak{h}}\,=\,
 (\,(\, \mathbf{C}\,\phi ^{+(0, 0,1)}\,) \otimes\mathfrak{h} )\,\oplus  (\oplus_{k=0}^2\mathbf{C}a_k)\oplus (\mathbf{C}\,\mathbf{n} )\,
 =
 \mathfrak{h}\oplus  (\oplus_{k=0}^2\mathbf{C}a_k)\oplus (\mathbf{C} \mathbf{n})\,.
\end{equation*}
  \(\widehat{\mathfrak{h}}\) is a commutative subalgebra of \(\widehat{\mathfrak{sl}}\,\) and 
 \(\,\widehat{\mathfrak{sl}}\) is decomposed into a direct sum of the simultaneous eigenspaces of \(ad\,(\hat h)\), \(\,\hat h\in \widehat{\mathfrak{h}}\,\).      
 $\Delta \subset \mathfrak{h}^*$ is regarded as a subset of $\,\widehat{\mathfrak{h}}^{\,*}$.    
We introduce  \(\nu\,\),\(\,\Lambda_k \in \widehat{\mathfrak{h}}^{\,*}$ for \(k=0,1,2\), as the dual elements of \(\mathbf{n}\) and \(a_k\), \(k=0,1,2\), respectively, so that 
  the set  \(\{\,\Pi\,, \,\Lambda_0 ,\,\Lambda_1 ,\,\Lambda_2,\,\nu \,\} \)  completes a basis of  $\widehat{\mathfrak{h}}^{\,*}$.       The set of roots of  the representation \(\left(\widehat{\mathfrak{sl}}\,,ad(\widehat{\mathfrak{h}})\right)\) is  
\begin{equation*}
\widehat{\Delta} = \left\{ \frac{m}{2} \nu + \alpha\,;\quad \alpha \in \Delta\,,\,m\in\mathbf{Z}\,\right\} \bigcup \left\{ \frac{m}{2} \nu ;\quad  m\in \mathbf{Z} \, \right\}  \,.
\end{equation*}
\(\widehat{ \mathfrak{sl}}\) has the weight space decomposition:
\begin{equation*}
\widehat{ \mathfrak{sl}}\,=\, \bigoplus_{m\in\mathbf{Z} }\, \widehat{ \mathfrak{g}}_{\frac{m}{2}\nu}\,\bigoplus\,\,\bigoplus_{\alpha\in \Delta,\, m\in\mathbf{Z} }\, 
\widehat{ \mathfrak{g}}_{\frac{m}{2}\nu+\alpha}\,.
\end{equation*}
Each weight space is given as follows.    
 \begin{eqnarray*}  
\widehat{ \mathfrak{g}}_{\frac{m}{2}\nu+ \alpha}\,&=&\mathbf{C}[\phi ^{\pm};\,m\,] \otimes_{\mathbf{R}} \mathfrak{g} _{ \alpha}\,,\quad\mbox{ for \(\alpha\neq 0\) and and \(m\in \mathbf{Z}\)}, \,,\\[0.2cm] 
\widehat{ \mathfrak{g}}_{0\nu}&=& (\,\mathbf{C}[\phi^{\pm};0\,]  \otimes_{\mathbf{R}} \mathfrak{h}\,)\oplus (\oplus_{k=0}^2\mathbf{C}a_k)\oplus(\mathbf{C}\mathbf{n})\,\supset\,\widehat{\mathfrak{h}}\,, 
\\[0.2cm]
 \widehat{ \mathfrak{g}}_{\frac{m}{2}\nu}&=&  \,\mathbf{C}[\phi^{\pm};\,m\,]  \otimes_{\mathbf{R}} \mathfrak{h}\,\,, \quad\mbox{for  \(0\neq  m\in\mathbf{Z} \) . }\,
 \end{eqnarray*}
 Where \(\mathbf{C}[\phi^{\pm};\,m]\) is the subspace of \(\mathbf{C}[\phi^{\pm}]\) constituted of those elements that are of homogeneous degree \(m\):  \(\phi(z)=\vert z\vert^m\phi(\frac{z}{\vert z\vert})\).

 Since a current algebra is the infinitesimal counterpart of a current group we expect to investigate a extensions of  the Lie group of smooth mappings of \(S^3\) to a Lie group.   J. Mickelsson gave an abelian extension of \(Map(S^3,SU(N))\) for \(N\geq 3\) by introducing a 2-cocycle now called after his name,  \cite{M2}.     The associated abelian extension was given by the affine dual of the space \(Map(S^3,su(N))\), \cite{Ko3, M2}.       We do not know the relation of the latter to the present central extension.

\section{Preliminaries on spinor analysis on $S^3\subset \mathbf{C}^2$}

Here we prepare a fairly long preliminary because  I think various subjects belonging to quaternion analysis or detailed properties of harmonic spinors of the Dirac operator on \(\mathbf{R}^4\) are not so familiar to the readers.    We refer to \cite{F,  Ko1, Ko2, K-I}.

\subsection{Quaternions $\mathbf{H}$}

Let  \(\mathbf{H}\) be the quaternion numbers.    A general quaternion is of the form \(\,x=x_1+x_2i+x_3j+x_4k\,\) with 
\(x_1,x_2,x_3,x_4\in \mathbf{R}\).   By taking \(x_3=x_4=0\) the complex numbers \(\mathbf{C}\) are contained in \(\mathbf{H}\) if we identify \(i\) as the usual complex number.    Every quaternion 
\(x\) has a unique expression   
\(x=z_1+jz_2\) with \(z_1,z_2\in\mathbf{C}\).    The quaternion multiplication will be from the right \(x\longrightarrow xy\) :
\begin{equation}\label{rmulti}
xy=( z_1+jz_2\,)(  w_1+jw_2\,)=(z_1w_1-\overline z_2 w_2)+j(\overline z_1w_2+z_2w_1),
\end{equation}
for \(x=z_1+jz_2\), \(y=w_1+jw_2\).    
Especially \(\mathbf{C}\) acts on \(\mathbf{H}\) from the right and 
 \(\mathbf{H}\) and \(\mathbf{C}^2\) are isomorphic as \(\mathbf{C}\)-vector spaces:
 \begin{equation}\label{hccoresp}
 \mathbf{H}\,\stackrel{\sim}{\longrightarrow}\,\mathbf{C}^2\,,\quad z_1+jz_2\longrightarrow
\left(\begin{array}{c}z_1\\z_2\end{array}\right)\,.
\end{equation}
The multiplication  of an element  \(g=a+jb\in \mathbf{H}\) to \(\mathbf{H}\) from the left yields an endomorphism in \(\mathbf{H}\): \(\{x\longrightarrow gx\}\in End_{\mathbf{H}}(\mathbf{H})\).
Under the above identification \(\mathbf{H}\simeq\mathbf{C}^2\)  the left quaternion multiplication  is expressed by a \(\mathbf{C}\)-linear map 
\begin{equation}
\mathbf{C}^2\ni z=\left(\begin{array}{c}z_1\\z_2\end{array}\right)\,\longrightarrow gz=
\,\left(\begin{array}{cc}
a&-\overline b\\[0.2cm] b&\overline a\end{array}\right)\left(\begin{array}{c}z_1\\z_2\end{array}\right)\,\in \mathbf{C}^2\,.\end{equation}
This establishes the \(\mathbf{C}\)-linear isomorphism
\begin{equation}\label{qtomj}
\mathbf{H}\,\ni\, a+jb\,\stackrel{\simeq}{\longrightarrow}\, \left(\begin{array}{cc}
a&-\overline b\\[0.2cm] b&\overline a\end{array}\right)\,\in MJ(2,\mathbf{C})=\left\{\left(\begin{array}{cc}
a&-\overline b\\[0.2cm] b&\overline a\end{array}\right)\,:\quad a,b\in\mathbf{C}\right\},
\end{equation}
that implies also a \(\mathbf{R}\)-algebra isomorphism:
 \begin{equation}
 \mathbf{H}\simeq\,End_{\mathbf{H}}(\mathbf{H})\simeq MJ(2,\mathbf{C})\,.
 \end{equation}

The algebra of quaternion numbers \(\mathbf{H}\) is also a simple example of Clifford algebra.      Here we put a short introduction of Clifford algebras.    The readers can find many excellent text books on this subject, \cite{F, G-M}.
Let \(\mathbf{K}\) be the field \(\mathbf{R}\) or \(\mathbf{C}\).   
Let \(V\) be a \(\mathbf{K}\)-vector space  equipped with a  quadratic form \(q\) over the field \(\mathbf{K}\).   The Clifford algebra 
\(C_{\mathbf{K}}(V,q)\) is a \(\mathbf{K}\)-algebra which contains \(V\) as a sub-vectorspace and is generated by 
the elements of \(V\) subject to the relations
\[v_1v_2+v_2v_1=2q(v_1,v_2)\,,\]
for \(v_1,\,v_2\in V\).    Thus if \(e_1,\cdots,e_n\) is an orthogonal basis of \(V\) then \(C_{\mathbf{K}}(V,q)\) is multiplicatively generated by the elements \(e_1,\cdots,e_n\) that satisfy the relations
\[e_i^2=q(e_i)\,,\quad e_ie_j+e_je_i=0\,,\quad i\neq j\,.\]
The \(2^n\) elements \(e_{i_1}e_{i_2}\cdots e_{i_k}\) for \(i_1<i_2<\cdots<i_k\) form a vector space basis of \(C_{\mathbf{K}}(V,q)\).  

The Clifford algebra \(C_{\mathbf{C}}(V\otimes_{\mathbf{R}}\mathbf{C},\,q_{\mathbf{C}})\) coincides with the complexification \(C_{\mathbf{R}}(V,q)\otimes_{\mathbf{R}}\mathbf{C}\) of the \(\mathbf{R}\)-algebra \(C_{\mathbf{R}}(V,q)\) 
where \(q_{\mathbf{C}}(v_1\otimes a_1,v_2\otimes a_2)=q(v_1,v_2)a_1a_2\).   

  In the sequel we denote 
 \(
{\rm Clif}_n={\rm Clif}_{\mathbf{R}}(\mathbf{R}^n,\,-x_1^2-\cdots-x_n^2)\) and 
\({\rm Clif}_n^c={\rm Clif}_n\otimes_{\mathbf{R}}\mathbf{C}\).   
 \({\rm Clif}_{n}^c\) coincidnes with the Clifford algebra \(C_{\mathbf{C}}(\mathbf{C}^n,z_1^2+\cdots+z_n^2)\).   
We have an important isomorphism:
\begin{equation} 
{\rm Clif}_{n+2}^c={\rm Clif}_n^c\otimes_{\mathbf{C}}\mathbf{C}(2)\,.  
\end{equation}

 Returning to \(\mathbf{H}\) and  \( MJ(2,\mathbf{C})\,\), the matrices of \(MJ(2,\mathbf{C})\) corresponding to \(i,\,j,\,k\in\mathbf{H}\) are 
\begin{equation}\label{3basis}
e_3=
\left(\begin{array}{cc}
i&0\\[0.2cm] 0&-i\end{array}\right)\,,\, e_2=\left(\begin{array}{cc}
0&-1\\[0.2cm] 1&0\end{array}\right)\,,\, e_1=\left(\begin{array}{cc}
0&-i\\[0.2cm] -i&0\end{array}\right)\,.
\end{equation}
We have the relation 
 \begin{equation}\label{relation}
e_3^2=e_1^2=-1\,,\quad e_1e_3+e_3e_1=0\,.
\end{equation}
The relation (\ref{relation}) shows that  \(\{e_1\,,\, e_3\}\)  generate the Clifford algebra 
 \({\rm Clif}_2\), so that \(\mathbf{H}\) and \( MJ(2,\mathbf{C})\) are isomorphic to \({\rm Clif}_2\).   
  
 {\bf examples}

 In the following \(\mathbf{K}(m)\) denote the algebra of \(m\times m\)-matrices with entries in the field \(\mathbf{K}\).
\[ {\rm Clif}_2=MJ(2,\mathbf{C})=\mathbf{H}\,,\quad {\rm Clif}_2^c =\mathbf{C}(2) \,=\mathbf{H}\otimes_{\mathbf{R}}\mathbf{C}\,,\quad  {\rm Clif}_4= \mathbf{H}(2), . \]
\[ {\rm Clif}_{4}^c={\rm Clif}_2^c\otimes_{\mathbf{C}}\,\mathbf{C}(2)=\mathbf{C}(2)\otimes_{\mathbf{C}}\mathbf{C}(2)=\mathbf{C}(4)=\mathbf{H}(2)\otimes_{\mathbf{R}}\mathbf{C}. \]

 For our later convenience we note also the Pauli matrices that represent the left multiplication by \(i,j\) and \(k\) on \( \mathbf{H}=\mathbf{C}^2\):
\begin{equation}\label{pauli}
\sigma_1=\left(\begin{array}{cc}
0&1\\[0.2cm] 1&0\end{array}\right)\,,
\, \sigma_2=\left(\begin{array}{cc}
0&-i\\[0.2cm] i&0\end{array}\right)\,,\, 
\sigma_3=
\left(\begin{array}{cc}
1&0\\[0.2cm] 0&-1\end{array}\right)\,.
\end{equation}
It holds \(e_1=-i\sigma_1\,,\, e_2=-i\sigma_2\,,\,e_3=i\sigma_3\,\).

 \subsection{ Spinors and Dirac operator on \(\mathbf{R}^4\)}
 
The vector space of complex 4-spinors is
\(\Delta=\mathbf{C}^2\otimes_{\mathbf{C}}\mathbf{C}^2\).   
Using this we obtain 
the so-called spinor representation of the Clifford algebra \({\rm Clif }^c_4\,\):
\begin{equation} {\rm Clif }^c_4= {\rm Clif } ( \mathbf{R} ^4)\otimes_{\mathbf{R}}  \mathbf{C}\,\simeq\, {\rm End} (\Delta).
\end{equation}  

We have seen \({\rm Clif}_4^c={\rm Clif}_2^c\otimes _{\mathbf{C}}\mathbf{C}(2)=\mathbf{C}(4)\) and that      \({\rm Clif }_2^c=\,MJ(2,\mathbf{C})\otimes_{\mathbf{R}}\mathbf{C}\) is generated by \(\{\,i\sigma_k\,;\,k=1,2,3\,\}\).   So  \(\,{\rm Clif }_4^c\) is 
generated by  the following Dirac matrices:
\begin{equation}
\gamma_k\,=\,\left(\begin{array}{cc}0&-i\sigma_k\\ i\sigma_k&0\end{array}\right)\,,\quad k=1,2,3,\quad
\gamma_4\,=\,\left(\begin{array}{cc}0&-I\\ -I&0\end{array}\right)\,.
\end{equation}
The set 
\begin{equation}
\left\{\gamma_p, \quad \gamma_p\gamma_q,\quad \gamma_p\gamma_q\gamma_r,\quad\gamma_p\gamma_q\gamma_r\gamma_s\,;\quad 1\leq p,q,r,s\leq 4\,\right\}
\end{equation}
gives a 16-dimensional basis of the representation \({\rm Clif}^c_4\,\simeq\, {\rm End} (\Delta)\,\) with the following relations:
\begin{equation}
\gamma_p\gamma_q+\gamma_q\gamma_p=2\delta_{pq}\,.
\end{equation}
The representation \(\Delta\) decomposes into irreducible representations \(\Delta^{\pm}=\mathbf{C}^2\) of \(\,{\rm Spin}(4)\). 

Let 
\(S= \mathbf{C} ^2\times \Delta\) be the trivial spinor bundle on \( \mathbf{C} ^2\).   The corresponding bundle 
\(S^+= \mathbf{C} ^2\times \Delta^+\) ( resp.  \(S^-= \mathbf{C} ^2\times \Delta^-\) )  is called the even ( resp.  odd ) half spinor bundle and the sections are called even ( resp. odd ) spinors.   
On the other hand, since \({\rm Clif}^c_4=\mathbf{H}(2)\otimes_{\mathbf{R}}\mathbf{C}\) and \(\Delta=\mathbf{H}^2=\mathbf{H}\oplus\mathbf{H}\),   we may look an even spinor on \(M\subset \mathbf{R}^4\) as a \( \mathbf{H}\) valued smooth function:
 \(
C^{\infty}(M, \mathbf{H})\,=\, C^{\infty}(M, S^+)\).   We feel free to use the alternative notation as in 
(\ref{hccoresp}) to write a spinor:
\begin{equation}
C^{\infty}(M, \mathbf{H})\,\ni\,u+jv\,\longleftrightarrow\,\left(\begin{array}{c}u\\v\end{array}\right)\,\in\,C^{\infty}(M, S^+)\,.
\end{equation}

 The Dirac operator is defined by
\begin{equation}
\mathcal{D} = c \circ d\,:\, C^{\infty}(M, S)\,\longrightarrow\,  C^{\infty}(M, S)\,.
\end{equation}
where $d : S \rightarrow  T^{*} \mathbf{C}^2\otimes S \simeq  T \mathbf{C}^2\otimes S $ is the covariant derivative which is the exterior differential in this case, 
and $c: T \mathbf{C}^2 \otimes S \rightarrow S$ is the bundle 
homomorphism coming from the Clifford multiplication.
By means of the decomposition $S = S^{+} \oplus S^{-}$ the Dirac operator has 
the chiral decomposition:
\begin{equation}
\mathcal{D} = 
\begin{pmatrix}
0 & D^{\dagger} \\
D & 0
\end{pmatrix}
: C^{\infty}(\mathbf{C}^2, S^{+} \oplus S^{-}) \rightarrow C^{\infty}(\mathbf{C}^2, S^{+} \oplus S^{-}).
\end{equation}

With respect to the Dirac matrices \(\gamma_{j}\), \(j=1,2,3,4\),  the Dirac operator has the expression:
\begin{equation}
\mathcal{D}=\,-\,\frac{\partial}{\partial x_1}\gamma_4\,-\,\frac{\partial}{\partial x_2}\gamma_3\,-\,\frac{\partial}{\partial x_3}\gamma_2\,-\,\frac{\partial}{\partial x_4}\gamma_1\,.
\end{equation}
If we adopt the notation
\[\frac{\partial}{\partial z_1}=\frac{\partial}{\partial x_1}-i\frac{\partial}{\partial x_2}\,,\quad
\frac{\partial}{\partial z_2}=\frac{\partial}{\partial x_3}-i\frac{\partial}{\partial x_4}\,,\]
 $D$ and \(D^{\dagger}\) have the following coordinate expressions;
\begin{equation}\label{Dirac}
D =  \begin{pmatrix} \frac{\partial}{\partial z_1} & - \frac{\partial}{\partial \Bar{z_2}} 
\\ \\ \frac{\partial}{\partial z_2} & \frac{\partial}{\partial \Bar{z_1}} \end{pmatrix} , 
\quad
D^{\dagger} = \begin{pmatrix} \frac{\partial}{\partial \Bar{z_1}} & \frac{\partial}{\partial \Bar{z_2}} 
\\ \\ - \frac{\partial}{\partial z_2} & \frac{\partial}{\partial z_1} \end{pmatrix}.
\end{equation}
We have also the following  quaternion expression:
\begin{equation}
D = \frac{\partial}{\partial z_1}+j\frac{\partial}{\partial z_2},\quad D^{\dagger} = \frac{\partial}{\partial z_1}-j\frac{\partial}{\partial z_2}.
\end{equation}

\subsection{Harmonic spinors}

\subsubsection{harmonic polynomials on \(S^3\subset\mathbf{C}^2\)}
 An even (resp. odd) spinor $\varphi$ is called a {\it harmonic spinor} if $D \varphi = 0$ 
 ( resp. \(D^{\dagger} \varphi = 0\) ).

We shall introduce a set of harmonic spinors which, restricted to \(S^3\), forms a 
complete orthonormal system of \(L^{2}(S^3, S^{+})\),   \cite{Ko0, Ko1}.
We introduce the following basis of the vector fields on 
 $\{|z|= 1\}\simeq S^3$. 
\begin{eqnarray*}
e_+ &=& -z_2 \frac{\partial}{\partial \Bar{z_1}} +z_1 \frac{\partial}{\partial \Bar{z_2}} ,
\quad
e_- = - \Bar{z_2} \frac{\partial}{\partial z_1} + \Bar{z_1} \frac{\partial}{\partial z_2}
\\[0.2cm]
\theta &=& \left(z_1 \frac{\partial}{\partial z_1} + z_2 \frac{\partial}{\partial z_2}
- \Bar {z_1} \frac{\partial}{\partial \Bar{z_1}} - \Bar{z_2} \frac{\partial}{\partial \Bar{z_2}}\right)
\end{eqnarray*}
We have the commutation relations;
\begin{equation*}
[\theta , e_+] = 2e_+,\quad [\theta , e_-] = -2e_-, \quad [e_+ ,e_-]=- \theta. 
\end{equation*}
We prefere rather the following basis 
\begin{equation}\label{vectbasis}
\theta_0=\sqrt{-1}\theta\,, \quad \theta_1=e_++e_-\,,\quad \theta_2=\sqrt{-1}(e_+-e_-)\,.
\end{equation}
These are real vector fields; \(\overline{\theta_k}=\theta_k\), \(k=0,1,2\), and  are nothing but the vector fields coming from the infinitesimal ( right ) action of \(su(2)\) on \(\mathbf{C}^2\):
\[dR(\sigma_2)=\sqrt{-1}\theta_1\,,\, dR(\sigma_1)=\sqrt{-1}\theta_2\,,\,dR(\sigma_3)=-\sqrt{-1}\theta_0\,.\]

\begin{remark}~~~
By the Euler angle coordinates \((\theta,\phi,\psi)\) on \(S^3\);  \(\,z_1=\cos\frac{\theta}{2}\exp(\frac{\sqrt{-1}}{2}(\psi+\phi))\), \(z_2=\sqrt{-1}\sin \frac{\theta}{2} \exp(\frac{\sqrt{-1}}{2}(\psi-\phi) )\), 
we have the following expression:
\begin{eqnarray*}
\theta_0&=&\frac{\partial}{\partial\psi},\\
 \theta_1&=&\,-\sin\psi\frac{\partial}{\partial\theta}+\frac{\cos\psi}{\sin\theta}\frac{\partial}{\partial\phi}
-\cot\theta\cos\psi\frac{\partial}{\partial\psi}\\
 \theta_2&=&\,\cos\psi\frac{\partial}{\partial\theta}+\frac{\sin\psi}{\sin\theta}\frac{\partial}{\partial\phi}
-\cot\theta\sin\psi\frac{\partial}{\partial\psi}
\end{eqnarray*}
\hfil\qed
\end{remark}

The dual basis are given by the following differential 1-forms:
\begin{eqnarray*}
\theta_0^{\ast}&=&\frac{1}{2\sqrt{-1}|z|^2}( \overline z_1 d z_1+\overline z_2 dz_2 -z_1 d\overline z_1- z_2 d \overline z_2  ),\\[0.2cm]
\theta_1^{\ast}&=&\frac{1}{2|z|^2}(e_+ ^{\ast}+e_- ^{\ast})\,,\qquad
\theta_2^{\ast}=\frac{1}{2\sqrt{-1}|z|^2}(e_+ ^{\ast}-e_- ^{\ast})\,,
\end{eqnarray*}
where
\begin{equation*}
e_+ ^{\ast}=( -\overline z_2 d\overline z_1+\overline z_1 d\overline z_2 )\,,
\quad
e_- ^{\ast}=( -z_2 d z_1+ z_1 d z_2 )\,.
\end{equation*}
It holds  
\(\theta_j^{\ast}(\theta_k)=\delta_{jk}\) for \(j,k=0,1,2\), and these are real 1-forms:
\begin{equation}\label{real}
\overline\theta_k^{\ast}=\theta_k^{\ast}\,,\quad k=0,1,2\,.
\end{equation}
The 
integrablity condition becomes
\begin{equation}\label{integrable}
\frac{\sqrt{-1}}{2}d\theta_0^{\ast}=\theta_1^{\ast}\wedge \theta_2^{\ast}\,,\quad
\frac{\sqrt{-1}}{2}d \theta_1^{\ast}=\theta_2^{\ast}\wedge \theta_0^{\ast}\,,\quad
\frac{\sqrt{-1}}{2}d \theta_2^{\ast}=\theta_0^{\ast}\wedge \theta_1^{\ast}\,,\end{equation}
and 
\(\theta_0^{\ast}\wedge \theta_1^{\ast}\wedge \theta_2^{\ast}=d\sigma_{S^3}\,
\) is the volume form on \(S^3\).

In the following we denote a function $f(z, \Bar{z})$ of variables $z, \Bar{z}$ simply by $f(z)$.
   For \(m = 0, 1, 2, \cdots\), and \( l, k = 0, 1, \cdots , m\),  we define the polynomials:
\begin{eqnarray}
v ^{k} _{(l,m-l)} &=& (e_-)^k z^{l}_{1} z^{m-l}_{2}.\\[0.2cm]  
 w^{k} _{(l,m-l)}&=& (-1)^k\frac{l!}{(m-k)!}\,v^{m-l}_{(k,m-k)}\,.
\end{eqnarray}
Then \(v ^{k} _{(l,m-l)}\) are harmonic polynomials on $\mathbf{C}^2$;   
\(\Delta v ^{k} _{(l,m-l)}=0\,\),  where \(\Delta
= \frac{\partial ^2}{\partial z _1 \partial \Bar{z}_1} + \frac{\partial ^2}{\partial z _2 \partial \Bar{z}_2}
\).      \(\left\{\,\frac{1}{\sqrt{2}\pi}v ^{k} _{(l,m-l)}\, ;  \,m = 0, 1, \cdots,\,  0\leq k,l\leq m\,\right\} \)  forms a \(L^2(S^3)\)-complete orthonormal system of the space of harmonic polynomials.    Similarly for   \(\left\{\,\frac{1}{\sqrt{2}\pi}w ^{k} _{(l,m-l)}\, ;  \,m = 0, 1, \cdots,\,  0\leq k,l\leq m\,\right\} \).

We see that, for each pair \((m,l)\), \(0\leq l\leq m\),  the subspace  \(H_{(m,l)}=\{v ^{k} _{(l,m-l)}\,; 0\leq k\leq m+1\}\) gives a \((m+1)\)-dimensional right representation of \(su(2)\) with the highest weight \(\frac m2\), \cite{Ko0}.
We note that \(\{w^{k} _{(l,m-l)}\,\}\) are the quantities parallel to \(\{v ^{k} _{(l,m-l)}\,\} \) but corresponding to the left action of \(su(2)\).  

The radial vector field is defined by
\begin{equation}
\frac{\partial}{\partial n} = \frac{1}{2 |z|}(\nu + \Bar{\nu}), \qquad \nu = z_1 \frac{\partial}{\partial z_1} + z_2 \frac{\partial}{\partial z_2} .\label{radial}
\end{equation}

\subsubsection{Harmonic spinors on \(S^3\subset\mathbf{C}^2\)}

We shall denote by \(\gamma\) the Clifford multiplication of the radial vector $\frac{\partial}{\partial n}\,$.   
The multiplication  $\gamma$ changes the chirality:
\(\gamma=
\gamma_+\oplus\gamma_- : S^+ \oplus S^- \longrightarrow S^- \oplus S^+ \), and \(\gamma^2 = 1
\).
\begin{proposition}\cite{Ko1}~~~
The Dirac operators $D$ and \(D^{\dagger}\) have the following polar decompositions:
\begin{eqnarray*}
D &=& \gamma_+ \left( \frac{\partial}{\partial n} - \Do \right) ,\\[0.2cm]
D^\dagger &=& \left( \frac{\partial}{\partial n} + \Do + \frac{3}{2|z|} \right)\gamma_- \,,
\end{eqnarray*}
where the tangential (nonchiral) Dirac operator $\Do$ is given by
\begin{equation*}
\Do = - \left[ \sum^{3} _{i = 1} \left( \frac{1}{|z|} \theta_i \right) \cdot \nabla_{\frac{1}{|z|} \theta_i} \right]
= \frac{1}{|z|} 
\begin{pmatrix}
-\frac{1}{2} \theta & \,e_+ \\[0.2cm]
-e_- &\, \frac{1}{2} \theta
\end{pmatrix}.
\end{equation*}
\end{proposition}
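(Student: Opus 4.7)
\bigskip
\noindent\textbf{Proof proposal.}
The plan is to compute both polar decompositions directly from the explicit Wirtinger-coordinate matrix expressions of \(D\) and \(D^{\dagger}\) in (\ref{Dirac}), by re-expressing every Wirtinger derivative in terms of the radial vector field \(\partial/\partial n\) and the tangential frame \(\{\theta,\,e_{+},\,e_{-}\}\), and then factoring off the Clifford multiplication by the unit radial vector. Concretely, from the definitions \(\nu=z_{1}\partial_{z_{1}}+z_{2}\partial_{z_{2}}\) and \(e_{-}=-\bar z_{2}\partial_{z_{1}}+\bar z_{1}\partial_{z_{2}}\) I solve a \(2\times 2\) linear system to obtain
\[
\partial_{z_{1}}=\tfrac{1}{|z|^{2}}(\bar z_{1}\nu-z_{2}e_{-}),\qquad
\partial_{z_{2}}=\tfrac{1}{|z|^{2}}(\bar z_{2}\nu+z_{1}e_{-}),
\]
and, by complex conjugation, the analogous formulas for \(\partial_{\bar z_{i}}\) in terms of \(\bar\nu\) and \(e_{+}\). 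Combining with \(\theta=\nu-\bar\nu\) and (\ref{radial}) I get \(\nu=|z|\,\partial/\partial n+\theta/2\) and \(\bar\nu=|z|\,\partial/\partial n-\theta/2\).

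For \(D\): substituting these expressions into the matrix (\ref{Dirac}) and collecting the coefficients of \(\partial/\partial n\) separately from those of \(\theta\) and \(e_{\pm}\), I expect
\[
D=\frac{1}{|z|}\begin{pmatrix}\bar z_{1}&-z_{2}\\ \bar z_{2}&z_{1}\end{pmatrix}\frac{\partial}{\partial n}+R.
\]
The prefactor matrix has determinant equal to \(1\) in absolute value and squares to the identity, which identifies it with the Clifford multiplication \(\gamma_{+}\) by the unit outward vector \(\partial/\partial n\); this identification can be checked against the Dirac matrices given earlier by writing \(\partial/\partial n\) in Cartesian form \(\frac{1}{|z|}\sum x_{k}\partial_{x_{k}}\) and reading off the lower-left blocks of the corresponding linear combination of the \(\gamma_{k}\). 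A short rearrangement of the remainder \(R\) then shows that \(R=-\gamma_{+}\,\Do\) with \(\Do\) equal to the stated matrix, proving the first identity.

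For \(D^{\dagger}\): the same substitution yields a decomposition in which \(\gamma_{-}\) must be factored on the \emph{right}. Since \(\partial/\partial n\) does not commute with the rational coefficients \(z_{i}/|z|,\,\bar z_{i}/|z|\) appearing in \(\gamma_{-}\), one must track the commutators \([\partial/\partial n,\,f(z,\bar z)]\) that appear upon moving \(\partial/\partial n\) past \(\gamma_{-}\). A direct calculation shows these commutators contribute a scalar zero-th order operator, and summing them produces precisely the term \(3/(2|z|)\): geometrically this is the mean-curvature contribution of \(S^{3}\subset\mathbf{R}^{4}\), the constant \(3\) being the dimension of \(S^{3}\). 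As a conceptual sanity check one can verify the two formulas against \(D^{\dagger}D=-\Delta\) on \(\mathbf{R}^{4}\), splitting the flat Laplacian as \((\partial/\partial n)^{2}+\tfrac{3}{|z|}\partial/\partial n+\tfrac{1}{|z|^{2}}\Delta_{S^{3}}\); the coefficient \(3\) of the radial first-order term is exactly the one that must reappear in \(D^{\dagger}\).

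The computations involved in both identities are long but entirely mechanical once the Wirtinger derivatives have been re-expressed in the radial/tangential frame; the genuine subtlety — and the only place where one might stumble — is the asymmetry between the two formulas, namely the absence of the scalar correction in \(D\) and its appearance as \(3/(2|z|)\) in \(D^{\dagger}\). Keeping the operator ordering straight when factoring \(\gamma_{\pm}\) on the correct side, and in particular correctly evaluating the commutators \([\partial/\partial n, z_{i}/|z|]\) and \([\partial/\partial n, \bar z_{i}/|z|]\), is the step that I expect to require the most care.
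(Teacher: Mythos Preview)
The paper does not supply a proof of this proposition; it is quoted with citation to \cite{Ko1}. Your overall strategy---express the Wirtinger derivatives in the radial/tangential frame \(\{\partial/\partial n,\,\theta,\,e_{\pm}\}\), substitute into the matrix forms (\ref{Dirac}), and factor off the Clifford multiplication \(\gamma_{\pm}\)---is the natural direct computation and will work. Your treatment of \(D\) is correct as outlined.

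Your account of \(D^{\dagger}\), however, misidentifies the source of the zeroth-order term. You claim that \(\partial/\partial n\) does not commute with the entries \(z_{i}/|z|,\ \bar z_{i}/|z|\) of \(\gamma_{-}\), and that the commutators \([\partial/\partial n,\,z_{i}/|z|]\) are what sum to \(3/(2|z|)\). This is false: each such entry is homogeneous of degree zero, so the radial derivative annihilates it and \([\partial/\partial n,\,\gamma_{-}]=0\) identically. The scalar term arises instead from the \emph{tangential} part. When you compute \(D^{\dagger}\gamma_{+}\) (equivalently, factor \(\gamma_{-}\) on the right), the Wirtinger derivatives---or, after your substitution, the operators \(\theta,\,e_{+},\,e_{-}\)---act on the degree-zero entries of \(\gamma_{+}\) and produce nonzero functions; for instance the \((1,1)\) entry picks up the multiplication operator by \(\partial_{\bar z_{1}}(\bar z_{1}/|z|)+\partial_{\bar z_{2}}(\bar z_{2}/|z|)\). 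Since you single out precisely this commutator step as ``the only place where one might stumble'' and explicitly plan to evaluate \([\partial/\partial n,\,z_{i}/|z|]\), following your stated plan literally will return zero and leave the mean-curvature contribution unaccounted for. The fix is simply to track the commutators of the correct operators---the tangential ones (or the raw \(\partial_{z_i},\,\partial_{\bar z_i}\))---with the entries of \(\gamma_{\pm}\); once that is done the rest is, as you say, mechanical.
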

The tangential Dirac operator on the sphere \(S^3 = \{|z| = 1 \}\);
 \begin{equation*}
 \Do | S^3 : C^{\infty} (S^3, S^+) \longrightarrow C^{\infty} (S^3, S^+)
 \end{equation*}
is a self adjoint elliptic differential operator.

Now we introduce a basis of the space of even harmonic spinors by the following formula.   
For $m = 0,1,2, \cdots ; l = 0,1, \cdots , m$ and $k=0,1, \cdots , m+1$, we put 
\begin{eqnarray}\label{basespinor}
\phi^{+(m,l,k)} (z) &=& \sqrt{\frac{(m+1-k)!}{k!l!(m-l)!}} \begin{pmatrix} k v^{k-1} _{(l, m-l)}\\ \\ -v^{k}_{(l, m-l)} \end{pmatrix},\\ \notag \\
\phi^{-(m,l,k)} (z) &=& \sqrt{ \frac{(m+1-k)!}{k!l!(m-l)!}} \left(\frac{1}{\vert z\vert^2}\right)^{m+2}\begin{pmatrix} w^{k} _{(m+1-l,l)}\\ \\ w^{k}_{(m-l,l+1)} \end{pmatrix}.
\end{eqnarray}

We have the following 
\begin{proposition}\cite{Ko1}
\begin{enumerate}
\item
\(\phi^{+(m,l,k)}\)  is a harmonic spinor on \(\mathbf{C}^2\) and  \(\phi^{-(m,l,k)}\) is a harmonic spinor on \(\mathbf{C}^2 \backslash \{0\}\) that is regular  at infinity.   
\item
On $S^3 = \{|z| = 1\}$ we have:
 \begin{equation}
 \Do \phi^{+(m,l,k)} =\frac{m}{2} \phi^{+(m,l,k)} \,,\qquad 
 \Do \phi^{-(m,l,k)} = -\frac{m+3}{2} \phi^{-(m,l,k)} \, .
 \end{equation}
 \item
The eigenvalues of $\,\Do$ are 
 \begin{equation}
 \frac{m}{2} \,,\quad - \frac{m+3}{2} \, ; \quad  m = 0, 1, \cdots,
 \end{equation}
and the multiplicity of each eigenvalue is equal to $(m+1)(m+2)$.
\item
The set of eigenspinors
 \begin{equation}
 \left\{ \frac{1}{\sqrt{2}\pi }\phi^{+(m,l,k)}, \quad \frac{1}{\sqrt{2}\pi }\phi^{-(m,l,k)} \,
 ; \quad m = 0, 1, \cdots , \,  0\leq l\leq  m,\, 0\leq k\leq m+1\right\}
 \end{equation}
forms a complete orthonormal system of $L^2 (S^3, S^+)$.
\end{enumerate}
\end{proposition}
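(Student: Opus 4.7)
The plan is to reduce everything to two facts: (i) the explicit spinors $\phi^{\pm(m,l,k)}$ are harmonic on their respective domains, and (ii) $\phi^{+(m,l,k)}$ is homogeneous of degree $m$ while $\phi^{-(m,l,k)}$ is homogeneous of degree $-(m+3)$. The polar decomposition $D=\gamma_+(\partial/\partial n-\Do)$ converts these into the eigenvalue identities of part (2); the multiplicity in part (3) drops out of the $(l,k)$-index ranges; and completeness in part (4) is imported from Peter--Weyl on $S^3\cong SU(2)$.

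For part (1), I would verify $D\phi^{+(m,l,k)}=0$ directly in the matrix form of $D$ from \eqref{Dirac}. Since $v^k_{(l,m-l)}=(e_-)^k(z_1^lz_2^{m-l})$, the commutators $[\partial_{\bar z_1},e_-]=\partial_{z_2}$, $[\partial_{\bar z_2},e_-]=-\partial_{z_1}$ and $[\partial_{z_j},e_-]=0$ reduce the two required identities to a single induction, $\partial_{\bar z_1}v^k_{(l,m-l)}=k(m-l)\,v^{k-1}_{(l,m-l-1)}=k\,\partial_{z_2}v^{k-1}_{(l,m-l)}$ and its partner. For $\phi^{-(m,l,k)}$ I would either run the same direct computation using the companion commutation rules for the $w^k$'s, or identify $\phi^-$ as the image of a $\phi^+$-type spinor under a Kelvin-type inversion adapted to the half-spinor Dirac operator, which intertwines harmonicity near $0$ with harmonicity on $\mathbf{C}^2\setminus\{0\}$ regular at $\infty$. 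Homogeneity is then immediate: each $e_-$ preserves total degree, so $v^k_{(l,m-l)}$ has degree $m$ and $w^k_{(m+1-l,l)}$ has degree $m+1$, whence $\phi^{+(m,l,k)}$ has degree $m$ and $\phi^{-(m,l,k)}$ has degree $m+1-2(m+2)=-(m+3)$. Part (2) now drops out: on a homogeneous function of degree $d$, $(\nu+\bar\nu)=d$, so $\partial/\partial n=d/(2|z|)$, and combining with $D\phi^\pm=0$ at $|z|=1$ gives $\Do\phi^{+(m,l,k)}=\tfrac{m}{2}\phi^{+(m,l,k)}$ and $\Do\phi^{-(m,l,k)}=-\tfrac{m+3}{2}\phi^{-(m,l,k)}$.

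For part (3) the dimension $(m+1)(m+2)$ is just the product of the $(l,k)$-ranges $0\le l\le m$, $0\le k\le m+1$, with no cross-counting between the positive and negative branches since those eigenvalues do not overlap. For part (4) the crucial input is the classical fact that, with the normalization $\sqrt{(m+1-k)!/(k!\,l!\,(m-l)!)}$, the $v^k_{(l,m-l)}$'s form a complete orthonormal system in the space of harmonic polynomials on $S^3\cong SU(2)$ — being matrix coefficients of the spin-$\tfrac{m}{2}$ representation — and the $w^k_{(l,m-l)}$'s play the dual role for the opposite chirality; packaging the two families into two-component spinors, and invoking mutual orthogonality of distinct eigenspaces of the self-adjoint elliptic operator $\Do$, yields the claimed complete orthonormal system in $L^2(S^3,S^+)$. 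The main obstacle is (4): one must check that the stated normalization constants really produce orthonormal two-component spinors, and rule out any missing eigenvalue of $\Do$ — although the latter is automatic once completeness is known, since any further eigenvector would be orthogonal to the entire system. The harmonicity in (1), the homogeneity reading, and the derivations of (2) and (3) are routine consequences of the commutation relations.
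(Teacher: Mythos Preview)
The paper does not supply its own proof of this proposition: it is stated with the citation \cite{Ko1} and no argument is given in the text. Your sketch is therefore not competing with anything in the present paper, and the relevant question is only whether your outline is internally sound.

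It is. The commutator identities $[\partial_{\bar z_1},e_-]=\partial_{z_2}$, $[\partial_{\bar z_2},e_-]=-\partial_{z_1}$, $[\partial_{z_j},e_-]=0$ do yield $\partial_{\bar z_1}(e_-)^k=k(e_-)^{k-1}\partial_{z_2}$ and $\partial_{\bar z_2}(e_-)^k=-k(e_-)^{k-1}\partial_{z_1}$ on holomorphic functions, which is exactly what is needed for $D\phi^{+(m,l,k)}=0$. The homogeneity count is correct (each application of $e_-$ preserves total degree in $(z,\bar z)$), and feeding $D\phi=0$ together with homogeneous degree $d$ into $D=\gamma_+(\partial_n-\Do)$ gives $\Do\phi=\tfrac{d}{2}\phi$ on $|z|=1$, which is precisely part~(2). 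Part~(3) is then just index counting once linear independence is secured, and you correctly observe that linear independence and the absence of further eigenvalues both follow from~(4). For~(4), the scalar completeness of $\{v^k_{(l,m-l)}\}$ in $L^2(S^3)$ is already recorded in the paper (just before the definition of the $\phi^{\pm}$), so the remaining honest work is the orthonormality computation for the two-component spinors with the stated normalizing constants and the passage from scalar completeness to spinor completeness; you flag this correctly as the only nontrivial step. Your proposed route via the polar decomposition and homogeneity is the standard one and is, to the best of my knowledge, the approach taken in the cited reference \cite{Ko1}.
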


\subsection{Algebra of  Laurent polynomial type spinors on \(S^3\)}

The multiplication of two even spinors is defined by
\begin{equation}
\phi_1\cdot \phi_2\,=\,
\,\left(\begin{array}{c}u_1u_2-\overline v_1v_2\\  v_1u_2+\overline u_1v_2\end{array}\right)\,,\end{equation}
for \(\phi=\left(\begin{array}{c}u_i\\ v_i\end{array}\right)\). \(i=1,2\).
This is the corresponding formula to the quaternion multiplication:
\[(u_1+jv_1)(u_2+jv_2)=(u_1u_2-\overline v_1v_2)+j( v_1u_2+\overline u_1v_2).\]
With this multiplication the \(\mathbf{C}\)-vector space \(S^3\mathbf{H}\simeq C^{\infty}(S^3, \,S^+)\) becomes an associative algebra ( not a \(\mathbf{C}\)-algebra).

If \(\varphi\) is a harmonic spinor on 
 \(\mathbf{C}^2\setminus\{0\}\) we have the expansion
 \begin{equation}
 \varphi(z)=\sum_{m,l,k}\,C_{+(m,l,k)}\phi^{+(m,l,k)}(z)+\sum_{m,l,k}\,C_{-(m,l,k)}\phi^{-(m,l,k)}(z),\label{Laurentspinor}
 \end{equation}
that is uniformly convergent on any compact subset of \(\mathbf{C}^2\setminus\{0\}\). 
The coefficients \(C_{\pm(m,l,k)}\) are given by the formula:
\begin{equation}\label{coef}
C_{\pm(m,l,k)}=\,\frac{1}{2\pi^2}\int _{S^3}\, \langle \varphi,\,\phi^{\pm(m,l,k)}\rangle\,d\sigma,
\end{equation}
where \(\langle\,,\,\rangle\) is the inner product of \(S^+\), \cite{Ko2}.    
For later use we mention
\begin{equation}\label{001coefficient}
 \int_{S^3}\,tr\,\varphi \,d\sigma=4\pi^2 Re.C_{+(0,0,1)},
  \end{equation}
\(Re.\) designating the real part.

\begin{definition}
We call the series (\ref{Laurentspinor})  {\it a spinor of  Laurent polynomial type} if only finitely many coefficients \(C_{\pm (m,l,k)}\)  are non-zero .   The space of spinors of Laurent polynomial type is denoted by   \(\mathbf{C}[\phi ^{\pm}] \).  
\end{definition} 

\begin{proposition}\cite{K-I}~~~
The restriction of 
 \( \mathbf{C}[\phi^{\pm}\,]\) to \(S^3\) is an associative subalgebra of \(S^3\mathbf{H}\) generated by the  spinors:
\begin{equation*}
\phi^{ +(0,0,1)}=\left(\begin{array}{c}1\\0\end{array}\right),\,
\phi^{ +(0,0,0)}=\left(\begin{array}{c}0\\-1\end{array}\right),\,
 \phi^{ +(1,0,1)}=\left(\begin{array}{c}z_2\\-\overline z_1\end{array}\right),\,
\phi^{-(0,0,0)}=\left(\begin{array}{c}z_2\\ \overline z_1.\end{array}\right).
\end{equation*}
\end{proposition}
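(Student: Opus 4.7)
The plan is to identify \(\mathbf{C}[\phi^{\pm}]|_{S^3}\) with the \(\mathbf{C}\)-vector space of polynomial \(\mathbf{H}\)-valued functions on \(S^3\) --- which is manifestly closed under the pointwise quaternion multiplication of \(S^3\mathbf{H}\) --- and then verify by explicit computation that the four listed spinors generate this subalgebra.

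Under the identification \(S^{+}\simeq\mathbf{H}\) the four generators become, on \(S^3\), the quaternion-valued functions \(1,\ -j,\ z_2-j\overline{z}_1,\ z_2+j\overline{z}_1\). I would first extract the coordinate functions \(z_1,z_2,\overline{z}_1,\overline{z}_2\) and the imaginary unit \(j\) from these. The combinations \(\tfrac12(\phi^{-(0,0,0)}\pm\phi^{+(1,0,1)})\) yield \(z_2\) and \(j\overline{z}_1\); multiplication of the latter on the left by \(\phi^{+(0,0,0)}=-j\) recovers \(\overline{z}_1\) via \((-j)(j\overline{z}_1)=\overline{z}_1\). Next, right-multiplication of \(z_2\pm j\overline{z}_1\) by \(-j\) produces \(\pm z_1-j\overline{z}_2\); suitable \(\mathbf{C}\)-linear combinations isolate \(z_1\) and \(j\overline{z}_2\), and one final multiplication by \(-j\) gives \(\overline{z}_2\). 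By closure under the quaternion multiplication and \(\mathbf{C}\)-linear combinations, the generated subalgebra then contains every polynomial \(\mathbf{H}\)-valued function on \(S^3\).

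Second, I would establish the identification \(\mathbf{C}[\phi^{\pm}]|_{S^3}=\{\,\text{polynomial }\mathbf{H}\text{-valued functions on }S^3\,\}\). The inclusion \(\subseteq\) is immediate: each \(\phi^{+(m,l,k)}\) is a homogeneous polynomial spinor on \(\mathbf{C}^2\), and \(\phi^{-(m,l,k)}|_{S^3}\) is polynomial because the prefactor \(|z|^{-(2m+4)}\) equals \(1\) on \(S^3\). For the reverse inclusion, I would observe that the restriction to \(S^3\) of the polynomial \(S^+\)-valued functions on \(\mathbf{C}^2\) of total degree \(\le d\) is a finite-dimensional subspace of \(L^2(S^3,S^+)\) stable under \(\Do\); indeed the matrix entries \(\theta,e_\pm\) of \(\Do\) preserve the total polynomial degree in \(z,\overline{z}\). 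By the completeness statement in the preceding proposition, any such finite-dimensional \(\Do\)-invariant subspace is a direct sum of eigenspaces of \(\Do\) and hence lies in the span of the \(\phi^{\pm(m,l,k)}|_{S^3}\), placing every polynomial spinor on \(S^3\) inside \(\mathbf{C}[\phi^{\pm}]|_{S^3}\).

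Combining the two steps, \(\mathbf{C}[\phi^{\pm}]|_{S^3}\) coincides with the polynomial \(\mathbf{H}\)-valued functions on \(S^3\), which is closed under \(S^3\mathbf{H}\)-multiplication and is generated by the four displayed spinors. The principal technical point is the finite-dimensional \(\Do\)-invariant decomposition in the second step --- guaranteeing that polynomial spinors on \(S^3\) have finite eigenspinor expansions --- but this reduces, as indicated, to the elementary fact that \(\Do\) preserves the polynomial degree filtration on \(S^3\), after which the spectral decomposition of \(\Do|_{L^2}\) together with the explicit basis of eigenspinors closes the argument.
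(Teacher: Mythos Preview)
Your argument is correct, and it takes a genuinely different route from the paper's. The paper proceeds by invoking an explicit product formula for the harmonic polynomials (Lemma~4.1 of \cite{Ko0}):
\[
v^{k_1}_{(a_1,b_1)}\,v^{k_2}_{(a_2,b_2)}=\sum_j C_j\,|z|^{2j}\,v^{k_1+k_2-j}_{(a_1+a_2-j,\,b_1+b_2-j)},
\]
which on \(S^3\) (where \(|z|=1\)) reduces a product of harmonic polynomials to a linear combination of harmonic polynomials. Combined with the fact that \(\begin{pmatrix}v^k_{(l,m-l)}\\0\end{pmatrix}\) and \(\begin{pmatrix}0\\v^{k+1}_{(l,m-l)}\end{pmatrix}\) are each linear combinations of \(\phi^{+(m,l,k+1)}\) and \(\phi^{-(m-1,k,l)}\), this gives closure under multiplication directly, and generation by the four spinors then follows by an inductive reduction on \(m\).

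Your approach instead \emph{characterizes} \(\mathbf{C}[\phi^{\pm}]|_{S^3}\) as the full space of polynomial \(\mathbf{H}\)-valued functions on \(S^3\): the inclusion \(\subseteq\) is immediate, and for \(\supseteq\) you use that \(\Do\) preserves the polynomial-degree filtration, so each finite-dimensional filtered piece, being invariant under the self-adjoint operator \(\Do\), decomposes into eigenspaces already spanned by the \(\phi^{\pm(m,l,k)}\). Closure under multiplication is then trivial (polynomials times polynomials are polynomials), and generation is checked by the elementary coordinate extraction you describe. This is conceptually cleaner and bypasses the combinatorics of the product formula entirely; what the paper's approach buys, by contrast, is an \emph{explicit} expansion of \(\phi^{\pm(m_1,l_1,k_1)}\cdot\phi^{\pm(m_2,l_2,k_2)}\) in the eigenspinor basis, and it makes the subsequent \(\sigma\)-invariance corollary drop out immediately from the same decomposition.
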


The proof is found in Section 2.4 of \cite{K-I}.   Here we shall give a brief explanation for the reader's convenience.    
In Lemma 4.1 of \cite{Ko0} we proved the product formula for the harmonic polynomials \(v^k_{(a.b)}\,\):   
\[
v ^{k_1} _{(a_1,b_1)} v ^{k_2} _{(a_2,b_2)}=\sum_{j=0}^{a_1+a_2+b_1+b_2}C_j\vert z\vert^{2j}\,v ^{k_1+k_2-j} _{(a_1+a_2-j,\,b_1+b_2-j)} \, ,
\]
for some rational numbers  \(C_j=C_j(a_1,a_2,b_1,b_2,k_1,k_2)\).     
Let \(k=k_1+k_2\), \(a=a_1+a_2\) and \(b=b_1+b_2\).    The above product formula yields that, restricted to \(S^3\), the harmonic  polynomial \(v^k_{(a,b)}\) is equal to  a constant multiple of 
\(\,v^{k_1}_{(a_1,b_1)} v^{k_2}_{(a_2,b_2)}\) modulo a linear combination of polynomials \(v^{k-j}_{(a-j,b-j)}\,\), \(1\leq j\leq min(k,a,b)\).
On the other hand spinors of the form 
\(\left(\begin{array}{c}v^k_{(l,m-l)}\\0\end{array}\right)\) and \(\left(\begin{array}{c}0\\v^{k+1}_{(l,m-l)}\end{array}\right)\) are written by linear combinations of \(\phi^{+(m,l,k+1)}\) and \(\phi^{-(m-1,k,l)}\).    Therefore we find that any product of two spinors  \(\phi^{\pm (m_1,l_1,k_1)}\cdot \phi^{\pm (m_2,l_2,k_2)}\) is written as a linear combination of \(\phi^{\pm(m_1+m_2-n,\cdot,\cdot)}\), \(1\leq n\leq m_1+m_2\).
Therefore  \( \mathbf{C}[\phi^{\pm}]\vert_{S^3}\)  becomes an associative algebra.   Moreover \(\phi^{\pm(m,l,k)}\) is written by a linear combination of the products  \(\,\phi^{\pm(m_1,l_1,k_1)}\cdot \phi^{\pm(m_2,l_2,k_2)}\) for \(0\leq m_1+m_2\leq m-1\,\),  \(0\leq l_1+l_2\leq l\) and \(0\leq k_1+k_2\leq k\) .   Hence 
we find that the algebra  \( \mathbf{C}[\phi^{\pm}]\vert_{S^3}\)  is generated by the following four spinors 
\[
\phi^{ +(0,0,1)}=\left(\begin{array}{c}1\\0\end{array}\right),\quad
\phi^{ +(0,0,0)}=\left(\begin{array}{c}0\\-1\end{array}\right),\quad
 \phi^{ +(1,0,1)}=\left(\begin{array}{c}z_2\\-\overline z_1\end{array}\right),\quad 
\phi^{-(0,0,0)}=\left(\begin{array}{c}z_2\\ \overline z_1\end{array}\right).
\]
 
We must note that  \( \mathbf{C}[\,\phi^{\pm}\,]\) over \(\mathbf{C}^2\setminus \{0\}\) is not an algebra.   

\begin{corollary}
Let \(\sigma\) be  the involution on \(C^{\infty}(S^3,S^+)\) defined  by
\begin{equation}
\sigma\phi=\left(\begin{array}{c}u\\-v\end{array}\right)\,,\quad\mbox{ for } \,\phi=\left(\begin{array}{c}u\\v\end{array}\right)\,.
\end{equation}
Then  \( \mathbf{C}[\phi^{\pm}]\vert_{S^3}\) is invariant by the involution.
\end{corollary}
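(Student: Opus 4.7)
The strategy is to reduce to the four generators of the algebra supplied by the preceding proposition, and exploit the fact that $\sigma$ respects the algebra structure.

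First, I would verify that $\sigma$ is a $\mathbf{C}$-linear involution of $C^{\infty}(S^3,S^+)$ that is a ring automorphism with respect to the pointwise product $\phi_1\cdot\phi_2$ introduced at the beginning of this subsection. $\mathbf{C}$-linearity is immediate from the formula $\sigma(c\phi)=c\,\sigma\phi$. The multiplicative property $\sigma(\phi_1\cdot\phi_2)=\sigma\phi_1\cdot\sigma\phi_2$ is a direct one-line calculation from the defining formula
\[
\phi_1\cdot\phi_2=\begin{pmatrix} u_1u_2-\overline{v}_1v_2\\ v_1u_2+\overline{u}_1v_2\end{pmatrix},
\]
since replacing $(v_1,v_2)$ by $(-v_1,-v_2)$ leaves the top component unchanged and flips the sign of the bottom; equivalently, in quaternion language $\sigma$ corresponds to the $\mathbf{R}$-algebra automorphism $u+jv\mapsto u-jv$ of $\mathbf{H}$, which is readily checked to be multiplicative. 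Hence the fixed-point set and, more generally, any $\sigma$-invariant subset of generators produces a $\sigma$-invariant subalgebra.

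Next I would evaluate $\sigma$ on each of the four generators listed in the preceding proposition, which is a direct inspection:
\[
\sigma\phi^{+(0,0,1)}=\phi^{+(0,0,1)},\qquad
\sigma\phi^{+(0,0,0)}=-\phi^{+(0,0,0)},
\]
\[
\sigma\phi^{+(1,0,1)}=\begin{pmatrix}z_2\\ \overline{z}_1\end{pmatrix}=\phi^{-(0,0,0)},\qquad
\sigma\phi^{-(0,0,0)}=\begin{pmatrix}z_2\\ -\overline{z}_1\end{pmatrix}=\phi^{+(1,0,1)}.
\]
Thus $\sigma$ permutes the four generators (up to sign), in particular sending each into $\mathbf{C}[\phi^{\pm}]|_{S^3}$.

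Combining the two observations, the image $\sigma(\mathbf{C}[\phi^{\pm}]|_{S^3})$ is contained in the subalgebra of $S^3\mathbf{H}$ generated by $\sigma\phi^{+(0,0,1)},\sigma\phi^{+(0,0,0)},\sigma\phi^{+(1,0,1)},\sigma\phi^{-(0,0,0)}$, which is contained in $\mathbf{C}[\phi^{\pm}]|_{S^3}$. Since $\sigma^2=\mathrm{id}$, applying $\sigma$ once more gives the reverse inclusion, so $\mathbf{C}[\phi^{\pm}]|_{S^3}$ is $\sigma$-invariant. There is essentially no obstacle here; the only point requiring verification beyond symbol manipulation is the multiplicativity of $\sigma$, which is a routine check from the product formula.
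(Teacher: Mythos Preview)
Your argument is correct, and it is a genuinely different (and arguably cleaner) route from the one in the paper.

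The paper proceeds element by element on the linear basis: from the discussion preceding the corollary, each spinor $\phi^{\pm(m,l,k)}$ can be written as a linear combination of ``pure top'' spinors $\begin{pmatrix}v^k_{(l,m-l)}\\0\end{pmatrix}$ and ``pure bottom'' spinors $\begin{pmatrix}0\\v^{k+1}_{(l,m-l)}\end{pmatrix}$, each of which lies in $\mathbf{C}[\phi^{\pm}]$. Since $\sigma$ fixes the former and negates the latter, $\sigma\phi^{\pm(m,l,k)}$ remains in $\mathbf{C}[\phi^{\pm}]$. No use is made of the algebra structure or of the four generators.

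Your approach instead exploits the preceding proposition at the level of \emph{algebra generators}: you check that $\sigma$ is a $\mathbf{C}$-algebra automorphism of $S^3\mathbf{H}$ (the multiplicativity verification you sketched is correct), and then observe that $\sigma$ permutes the four generators of $\mathbf{C}[\phi^{\pm}]\vert_{S^3}$ up to sign, so the generated subalgebra is preserved. This reduces the check to four elements plus one structural property, whereas the paper's argument treats every basis spinor but needs only linearity. Both are short; yours makes more direct use of the proposition that immediately precedes the corollary.
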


In fact, since \(\left(\begin{array}{c}v^k_{(l,m-l)}\\0\end{array}\right)\) and \(\left(\begin{array}{c}0\\v^{k+1}_{(l,m-l)}\end{array}\right)\) are written by linear combinations of \(\phi^{+(m,l,k+1)}\) and \(\phi^{-(m-1,k,l)}\), we see that  \(\sigma \phi^{\pm (m-1,k,l)}\in \mathbf{C}[\phi^{\pm}]\).

\subsection{2-cocycles on \(S^3\mathbf{H} \)} 

Let \(S^3\mathbf{H}=Map(S^{3}, \mathbf{H})\,=\,C^{\infty}(S^3, S^+)\) be the set of smooth even spinors on \(S^3\).    We continue to adopt the \(\mathbf{C}\)-linear correspondence (\ref{hccoresp}) : 
\begin{equation}
C^{\infty}(S^3, S^+)\ni \phi=\begin{pmatrix} u\\ v \end{pmatrix}\,\longleftrightarrow\,u+jv\in\,S^3\mathbf{H}\,.\end{equation}

\(S^3\mathbf{H}\) being an associative algebra, we define a \(\mathbf{R}\)-bilinear bracket
 on  \(S^3\mathbf{H}\):
 \begin{equation}
\bigl[\,\phi _1\, , \,\phi _2\,\bigr]=\,\phi_1\cdot\phi_2-\phi_2\cdot\phi_1=\,
 \begin{pmatrix} \,v_1 \Bar{v} _2 - \Bar{v}_1 v_2 \,\\[0.2cm]
\,(u _ 2 - \Bar {u} _2 ) v _1 - (u _1 - \Bar {u} _1) v_2\, \end{pmatrix} \,,
\end{equation} for even spinors   $\phi _1 = \begin{pmatrix} u_1\\ v_1 \end{pmatrix}$ and $\phi _2 = \begin{pmatrix} u_2\\ v_2 \end{pmatrix}$.       This is equivalent to the following \(\mathbf{R}\)-bilinear  bracket: 
 \begin{equation}
\bigl[\,u _1+jv_1, \,u _2+jv_2\,\bigr]\,=\, (v_1 \Bar{v} _2 - \Bar{v}_1 v_2)+\,j((u _ 2 - \Bar {u} _2 ) v _1 - (u _1 - \Bar {u} _1) v_2)\,.
\end{equation}
We are dealing with a \(\mathbf{C}\)-vector space endowed with a \(\mathbf{R}\)-bilinear bracket.   If we define the involution on \(S^3\mathbf{H}\) by 
\begin{equation}
\sigma:\,\phi=\begin{pmatrix} u\\ v \end{pmatrix}\,\longleftrightarrow\,\sigma\phi=\begin{pmatrix} u\\ -v \end{pmatrix}\,,
\end{equation}
\(S^3\mathbf{H}\) becomes a quaternion Lie algebra.

We define the trace of a spinor \(\phi= \begin{pmatrix} u\\ v \end{pmatrix}\) by the formula: 
\begin{equation} 
tr\,\phi\,=\,u+\overline u.
\end{equation}
Evidently we have \(tr\,[\phi,\psi]=0\).

In the following we introduce three 2-cocycles on \(S^3\mathbf{H}\) that come from the base vector fields \(\theta_k\,;\,k=0,1,2\), on \(S^3\), (\ref{vectbasis}).   

For a \(\varphi=\left(\begin{array}{c}u\\ v\end{array}\right)\in S^3\mathbf {H}\), we put 
\begin{equation}
\Theta_{k}\,\varphi\,=\,\frac{1}{2}\,\left(\begin{array}{c}\,\theta_{k}\, u\\[0.3cm] \,\theta_{k}\, v\end{array}\right),\qquad k=0,1,2.
\end{equation}
Note that \(\theta_k\) is a real vector field: \(\,\theta_k=\overline\theta_k\).

\begin{lemma}~~ For any  \(k=0,1,2\), and \(\phi,\,\psi \in S^3\mathbf{H}\), we have 
\begin{eqnarray}
\Theta_{k}\,(\phi\cdot\psi\,)\,&=&\,(\Theta_{k}\,\phi)\cdot \,\psi\,+\,\phi\cdot\,(\Theta_{k}\,\psi)\,.
\label{leibnitz}
\\[0.2cm]
\int_{S^3}\,\Theta_{k}\,\varphi\,d\sigma\,&=&\,0.\label{tracetheta}
\end{eqnarray}
\end{lemma}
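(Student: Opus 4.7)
The plan is to prove the two identities separately, each by a short direct argument using structure already established in the excerpt.

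For the Leibniz-type identity (\ref{leibnitz}), I would unpack both sides componentwise and reduce everything to the usual Leibniz rule for the scalar vector field $\theta_k$. Since $\theta_k$ is a first-order differential operator, $\theta_k(fg) = (\theta_k f)g + f(\theta_k g)$ holds for any smooth scalars $f,g$. The additional input needed is that $\theta_k$ is a \emph{real} vector field (as noted immediately after (\ref{vectbasis})), so $\theta_k \bar{f} = \overline{\theta_k f}$; this is exactly what reconciles the complex conjugations appearing in the spinor product
\[\phi_1\cdot \phi_2 = \begin{pmatrix} u_1 u_2 - \bar{v}_1 v_2 \\ v_1 u_2 + \bar{u}_1 v_2 \end{pmatrix}.\]
Applying $\tfrac{1}{2}\theta_k$ entry by entry, expanding via the scalar Leibniz rule, and then using reality of $\theta_k$ on the conjugated factors reassembles the result into $(\Theta_k\phi_1)\cdot\phi_2 + \phi_1\cdot(\Theta_k\phi_2)$. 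The step is purely bookkeeping; no deeper idea is needed.

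For the integral identity (\ref{tracetheta}), I would reduce componentwise to the statement $\int_{S^3}(\theta_k u)\,d\sigma = 0$ for any scalar $u\in C^\infty(S^3)$, and prove this via Stokes' theorem. The standard identity $du\wedge \iota_{\theta_k} d\sigma = (\theta_k u)\,d\sigma$, combined with $d\sigma = \theta_0^*\wedge\theta_1^*\wedge\theta_2^*$ and the duality $\theta_j^*(\theta_k) = \delta_{jk}$, yields $\iota_{\theta_0} d\sigma = \theta_1^*\wedge\theta_2^*$ and cyclic analogues. Closedness of $\iota_{\theta_k} d\sigma$ then follows at once from the integrability relations (\ref{integrable}): for instance, using $d\theta_1^* = -2\sqrt{-1}\,\theta_2^*\wedge\theta_0^*$ and $d\theta_2^* = -2\sqrt{-1}\,\theta_0^*\wedge\theta_1^*$,
\[d(\theta_1^*\wedge \theta_2^*) = d\theta_1^*\wedge \theta_2^* - \theta_1^*\wedge d\theta_2^* = -2\sqrt{-1}\bigl(\theta_2^*\wedge\theta_0^*\wedge\theta_2^* - \theta_1^*\wedge\theta_0^*\wedge\theta_1^*\bigr) = 0,\]
each term vanishing by repetition of a 1-form. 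Hence $d\bigl(u\,\iota_{\theta_k} d\sigma\bigr) = (\theta_k u)\,d\sigma$, and integrating over the closed manifold $S^3$ gives (\ref{tracetheta}). Equivalently, one may observe that $\theta_k$ generates the isometric right $SU(2)$-action and hence is a Killing (in particular divergence-free) vector field, which phrases the same conclusion metrically.

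I do not anticipate any genuine obstacle. The only mild care needed is the conjugation bookkeeping in (\ref{leibnitz}) and the closedness check for $\iota_{\theta_k} d\sigma$ in (\ref{tracetheta}); both are short and mechanical.
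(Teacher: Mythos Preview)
Your proposal is correct and matches the paper's proof essentially line for line. The paper handles (\ref{leibnitz}) by invoking the reality of $\theta_k$, and for (\ref{tracetheta}) it sets $\beta = f\,\theta_1^\ast \wedge \theta_2^\ast$ (which is precisely your $f\,\iota_{\theta_0} d\sigma$), uses the integrability relations (\ref{integrable}) to get $d\beta = (\theta_0 f)\,d\sigma$, and applies Stokes on the closed manifold $S^3$; your alternative Killing-field remark also appears verbatim as the Remark following the paper's proof.
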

\begin{proof}
For the first equation we use the fact \(\overline \theta_k=\theta_k\), (\ref{real}).  
The second assertion follows from the fact 
\begin{equation}\label{vanishing}
\int_{S^3}\,\theta_{k} f\,d\sigma\,=\,0\,,
\end{equation}
for any function \(f\) on \(S^3\).    This is proved as follows.   We consider the 2-form \(\beta=f\theta_1^{\ast}\wedge\theta_2^{\ast}\).   By virtue of the integrable condition (\ref{integrable}) we have 
\[d\beta=(\theta_0f)\,\theta_0^{\ast}\wedge\theta_1^{\ast}\wedge\theta_2^{\ast}=\theta_0f\,d\sigma\,.\]
Hence 
\[0=\int_{S^3}\,d\beta\,=\,\int_{S^3}\theta_0f\,d\sigma.\]
Similarly for the integral of \(\theta_kf\), \(k=1,2\).
\end{proof}

\begin{remark}~~~  
The formula (\ref{vanishing}) is an evident fact if we recognize the invariance under the action of \(SO(4)\) of each \(\theta_k\) and the volume form \(d\sigma\) .   This is noticed to me by Professor T. Iwai of Kyoto University.
\end{remark}

\begin{definition}~~~
For \(\phi_1\) and \(\phi_2\in S^3\mathbf{H}\,\), we put 
\begin{equation}
c_k(\phi_1,\phi_2)\,=\,
\,\frac{1}{2\pi^2}\int_{S^3}\,\,tr\,(\,\Theta_k \phi_1\cdot \phi_2\,)\, d\sigma\,,\quad k=0,1,2\,.
  \end{equation}
 \end{definition}

\begin{proposition}\label{2cocycle}~~ 
For each \(k=0,1,2\), 
\(c_k\) defines a non-trivial ( real valued ) 2-cocycle on the \(\mathbf{C}\)-algebra \(S^3\mathbf{H}\,\).   That is, \(c_k\)  satisfies the equations:
\begin{eqnarray}
&&c_k(\phi_1\,,\,\phi_2)\,=\,-\, c_k(\phi_2\,,\,\phi_1)\,,\label{asym}\\[0.3cm]
&&c_k(\phi_1\cdot\phi_2\,,\,\phi_3)+c_k(\phi_2\cdot\phi_3\,,\,\phi_1\,)+c_k(\phi_3\cdot\phi_1\,,\,\phi_2\,)=0 ,\label{cycle}
\end{eqnarray}
for any \(\phi_1,\,\phi_2,\,\phi_3\in S^3\mathbf{H}\), 
and there is no 1-cochain \(b\) such that \(c_k(\phi_1\,,\phi_2)=b(\,[\phi_1,\,\phi_2]\,)\).
\end{proposition}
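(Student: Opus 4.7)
The plan is to verify the three claims in order: the antisymmetry (\ref{asym}), the cyclic cocycle identity (\ref{cycle}), and non-triviality. Both algebraic identities will reduce to the Leibniz rule (\ref{leibnitz}) for $\Theta_k$, the cyclicity of the trace, and the vanishing $\int_{S^3}\Theta_k\varphi\,d\sigma=0$ recorded in (\ref{tracetheta}); non-triviality will be obtained by exhibiting an explicit pair whose bracket is zero while $c_k$ evaluates to a nonzero number.

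For the antisymmetry, I first note the pointwise identity $tr(\phi\cdot\psi)=tr(\psi\cdot\phi)$, which is immediate from $tr\begin{pmatrix}u\\v\end{pmatrix}=u+\bar u$ together with the commutativity of complex multiplication. Applying the Leibniz rule to the product $\phi_1\cdot\phi_2$ and integrating gives $\int_{S^3}tr\bigl(\Theta_k(\phi_1\cdot\phi_2)\bigr)d\sigma=\int_{S^3}tr(\Theta_k\phi_1\cdot\phi_2)\,d\sigma+\int_{S^3}tr(\phi_1\cdot\Theta_k\phi_2)\,d\sigma$, and the left-hand side vanishes because $tr(\Theta_k\varphi)=\tfrac12\theta_k(tr\,\varphi)$ and $\theta_k$ integrates to zero by (\ref{vanishing}). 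Applying the cyclicity of the trace to the second summand on the right then yields $c_k(\phi_1,\phi_2)+c_k(\phi_2,\phi_1)=0$.

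For the cocycle identity I expand each of the three summands via the Leibniz rule, producing six integrands of shape $tr\bigl(\phi_a\cdot(\Theta_k\phi_b)\cdot\phi_c\bigr)$, where associativity in $S^3\mathbf{H}$ is used to unambiguously interpret the triple products. Trace cyclicity pairs these six integrands into three duplicate pairs, so the cyclic sum collapses to $\frac{1}{\pi^2}\int_{S^3}tr\bigl((\Theta_k\phi_1)\cdot\phi_2\cdot\phi_3+\phi_1\cdot(\Theta_k\phi_2)\cdot\phi_3+\phi_1\cdot\phi_2\cdot(\Theta_k\phi_3)\bigr)d\sigma$; a second application of the Leibniz rule identifies the integrand with $tr\bigl(\Theta_k(\phi_1\cdot\phi_2\cdot\phi_3)\bigr)$, which integrates to zero by (\ref{tracetheta}) applied to $\varphi=\phi_1\cdot\phi_2\cdot\phi_3$.

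For non-triviality, the bracket formula (\ref{Liebrintr}) shows that $[\phi_1,\phi_2]=0$ as soon as both spinors have zero second component. Choosing $\phi_i=\begin{pmatrix}u_i\\ 0\end{pmatrix}$ with $u_i$ real-valued, one finds $c_k(\phi_1,\phi_2)=\frac{1}{2\pi^2}\int_{S^3}(\theta_k u_1)\,u_2\,d\sigma$, and for $k=0$ the choice $u_1=z_1+\bar z_1$, $u_2=\sqrt{-1}(z_1-\bar z_1)$ gives $\theta_0 u_1=u_2$ and hence $\theta_0 u_1\cdot u_2=-(z_1-\bar z_1)^2=2|z_1|^2-z_1^2-\bar z_1^2$, which integrates over $S^3$ to $2\pi^2$ by the standard orthogonality relations on the sphere; thus $c_0(\phi_1,\phi_2)=1\neq 0$, so no 1-cochain $b$ can satisfy $c_0(\phi_1,\phi_2)=b([\phi_1,\phi_2])=0$. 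Completely analogous real-valued choices of $u_1,u_2$, adapted to the expressions of $\theta_1,\theta_2$ in terms of $e_{\pm}$, witness the non-triviality of $c_1$ and $c_2$. The only non-trivial bookkeeping is the combinatorial pairing of the six integrands in the cocycle identity, but since trace-cyclicity reduces everything to a single Leibniz computation I expect no serious obstacle.
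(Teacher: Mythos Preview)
Your proof is correct and follows essentially the same approach as the paper: antisymmetry via Leibniz plus (\ref{tracetheta}) plus trace cyclicity, the cocycle identity via the same ingredients, and non-triviality via explicit spinors with vanishing second component so that the bracket is zero. The only cosmetic differences are that the paper proves (\ref{cycle}) more economically by expanding just $c_k(\phi_1\cdot\phi_2,\phi_3)$ and then invoking the already established antisymmetry rather than symmetrising over all three terms, and it uses $z_2,\bar z_2,\bar z_1$ in place of your $z_1,\bar z_1$ for the non-triviality witnesses.
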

\begin{proof}~~~ Evidently each  \(c_k\) is \(\mathbf{R}\)-bilinear ( It is not \(\mathbf{C}\)-bilinear ).
By (\ref{tracetheta}) and the Leibnitz rule (\ref{leibnitz}) we have
\begin{eqnarray*}
0&=&\, \int_{S^3}\,\,tr\,(\,\Theta_k\,(\phi_1\cdot\phi_2)\,)\,d\sigma\,=\,\int_{S^3}\,tr\,\left(\,\Theta_k\,\phi_1\,\cdot\phi_2\,\right)d\sigma\,+\,\int_{S^3}\,tr\,\left(\,\phi_1\cdot\,\Theta_k\,\phi_2\,\right) d\sigma
 \end{eqnarray*}
Hence
 \(\,c_k(\phi_1\,,\,\phi_2\,)\,+\,c_k(\,\phi_2\,,\,\phi_1\,)\,=0\,\).
  The following calculation proves (\ref{cycle}).   
\begin{eqnarray*}
c_k(\phi_1\cdot\phi_2\,,\,\phi_3)&=&\, \int_{S^3}\,\,tr\,(\,\Theta_k(\,\phi_1\cdot\phi_2\,)\cdot\,\phi_3\,)\,d\sigma
 \\[0.2cm]
&=&\, \int_{S^3}\,\,tr\,(\,\Theta_k\phi_1\cdot \phi_2\cdot \phi_3\,)d\sigma \,+\,\, \int_{S^3}\,\,tr\,(\,\Theta_k\phi_2\cdot\,\phi_3\,\cdot\phi_1\,)d\sigma \\[0.2cm]
&=&\,c_k(\phi_1\,,\,\phi_2\cdot\phi_3\,)+c_k(\phi_2\,,\,\phi_3\cdot\phi_1\,)
=\,- c_k(\phi_2\cdot\phi_3\,,\phi_1\,)\,-\,c_k(\phi_3\cdot\phi_1\,,\,\phi_2)
.
\end{eqnarray*}  
Suppose now that \(c_0\) is the coboundary of a 1-cochain 
\(b:\, S^3\mathbf{H}\longrightarrow \mathbf{C}\).    Then 
\begin{equation}
c_0(\phi_1,\,\phi_2)=(\delta\,b)(\phi_1,\,\phi_2)\,=\,b(\,[\phi_1,\,\phi_2]\,)\nonumber
\end{equation}
for any \(\phi_1,\phi_2\in S^3\mathbf{H}\).   
Take 
\(\phi_1
=\,\frac{1}{\sqrt{2}}\phi^{+(1,1,2)}\,
=\left(\begin{array}{c} -\overline z_2 \\  0\end{array}\right)\)
and \(\,\phi_2=\frac{1}{2}( \phi^{+(1,0,1)}+\phi^{-(0,0,0)})=\left(\begin{array}{c}z_2\\ 0\end{array}\right)\) .   Then \([\,\phi_1,\,\phi_2\,]=0\),   
so 
\((\delta b)(\phi_1,\phi_2)=0\).
But \(c_0(\phi_1,\phi_2)=\frac{1}{2}\,\).   Therefore \(c_0\) can not be a coboundary.    For \(\phi_1\) and \(\phi_3=\phi^{+(1,0,2)}=\sqrt{2}\left(\begin{array}{c}\overline z_1\\ 0\end{array}\right)\), we have \([\phi_1,\phi_3]=0\) and \(c_1(\phi_1,\phi_3)=-\frac{1}{\sqrt{2}}\).   So \(c_1\) can not be a coboundary by the same reason as above.    Similarly for \(c_2\).
  \end{proof}

 {\bf Examples}
 \begin{enumerate}
 \item
 \[  c_0(\phi^{\pm(m,l,k)},\,\phi^{\pm(p,q,r)})=0 ,\quad 
c_0(\,\phi^{+(1,1,2)}\,,\,\sqrt{-1}(\phi^{+(1,0,1)}+\phi^{-(0,0,0)})\,)
\,=\sqrt{2}.
\]
  In Lemma 2.13 of \cite{K-I} we listed the values of 2-cocycles \(c_0(\phi^{\pm(m,l,k)},\,\sqrt{-1}\phi^{\pm(p,q,r)})\).   
  \item~~~Let
   \[\kappa=\phi^{+(1,0,1)}=\left(\begin{array}{c} z_2\\-\overline{z}_1\end{array}\right),\quad
\kappa_{\ast}=\,\frac{-\sqrt{-1}}{\sqrt{2}}(\phi^{-(0,0,0)}-\phi^{+(1,1,2)}-\phi^{+(1,0,1)})
=\sqrt{-1}\left(\begin{array}{c}  \overline z_2 \\ \overline z_1\end{array}\right)
.\]
Then 
\[(\Theta_0\,\kappa)\cdot \kappa_{\ast}\,=\,-\frac{1}{2}\left(\begin{array}{c}1\\0\end{array}\right),\]
and
\[c_0(\kappa,\,\kappa_{\ast})=\frac{1}{2\pi^2}\int_{S^3}\,tr\,[(\Theta_0\kappa)\cdot\kappa_{\ast}\,]\,d\sigma_3=-1.\]
Similarly
\[c_1(\kappa,\,\kappa_{\ast})=c_2(\kappa,\,\kappa_{\ast})=0\]
\end{enumerate}

 \subsection{Radial derivative on \(S^3\mathbf{H}\)}

 We define the following operator \(\mathbf{n}_0\) on \(C^{\infty}(S^3)\):
\begin{equation}
\mathbf{n}_0\,f (z)= |z|\frac{\partial}{\partial n}f(z)\,=\,\frac{1}{2}(\nu+\Bar{\nu})f(z)\,.\label{d_0}
\end{equation}
Here we consider the radial derivative of a function on \(\mathbf{C}^2\) and then restrict it to \(S^3=\{|z|=1\}\).

For an even spinor \(\varphi=\left(\begin{array}{c}u\\ v\end{array}\right)\) 
we put 
\begin{equation}\label{radialder}
\mathbf{n}_0\,\varphi\,=\,\left(\begin{array}{c}\mathbf{n}_0\,u\\[0.2cm] \mathbf{n}_0\,v\end{array}\right).
\end{equation}
The radial derivative \(\mathbf{n}_0\) preserves \(\mathbf{C}[\phi^{\pm}]\), that is, \(\,\mathbf{n}_0\varphi\in \mathbf{C}[\phi^{\pm}]\) for \(\varphi\in \mathbf{C}[\phi^{\pm}]\).

\begin{proposition}\label{typepreserve}~~
\begin{enumerate}
\item
\begin{equation}
\mathbf{n}_0( \phi_1 \cdot \phi_2 ) =(\mathbf{n}_0 \phi _1) \cdot \phi_2 + \phi _1 \cdot (\mathbf{n}_0 \phi _2)\,.
\label{leibnitz1}
\end{equation}
\item
\begin{equation}
\mathbf{n}_0 \phi^{+(m,l,k)} = \frac{m}{2}\, \phi^{+(m,l,k)},\quad 
\mathbf{n}_0 \phi^{-(m,l,k)}=\,- \frac{m+3}{2}\, \phi^{-(m,l,k)}.\label{normalder}
\end{equation}
\item
Let \(\varphi\) be a spinor of Laurent polynomial type:
\begin{equation*}
 \varphi(z)=\sum_{m,l,k}\,C_{+(m,l,k)}\phi^{+(m,l,k)}(z)+\sum_{m,l,k}\,C_{-(m,l,k)}\phi^{-(m,l,k)}(z).
 \end{equation*}
 Then \(\mathbf{n}_0\varphi\) is a  spinor of Laurent polynomial type and 
 \begin{equation}
 \int_{S^3}\,tr\,(\,\mathbf{n}_0\,\varphi\,)\,d\sigma\,=\,0\,.
 \end{equation}
\end{enumerate}
\end{proposition}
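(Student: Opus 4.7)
The plan is to treat the three parts in order, with the unifying observation that $\mathbf{n}_0=\tfrac{1}{2}(\nu+\bar\nu)$ is the real Euler operator on $\mathbf{C}^2\setminus\{0\}$: on a function that is homogeneous of real degree $d$ under the positive scaling $z\mapsto tz$, it acts by multiplication by $d/2$, since differentiating $f(tz,t\bar z)=t^d f(z,\bar z)$ at $t=1$ yields $(\nu+\bar\nu)f=df$.

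For part (1), I would first record that $\nu+\bar\nu$ is a \emph{real} first-order differential operator on $C^{\infty}(\mathbf{C}^2\setminus\{0\})$, so $\mathbf{n}_0\bar f=\overline{\mathbf{n}_0 f}$ and $\mathbf{n}_0$ is a derivation on scalar functions. The spinor product $\phi_1\cdot\phi_2$ is, component by component, an $\mathbf{R}$-bilinear combination of products and conjugate-products of the entries of $\phi_1$ and $\phi_2$, so the scalar Leibniz rule for $\mathbf{n}_0$ propagates componentwise to (\ref{leibnitz1}).

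For part (2), it suffices to verify that $\phi^{+(m,l,k)}$ and $\phi^{-(m,l,k)}$ are homogeneous of real degree $m$ and $-(m+3)$ respectively. The monomial $z_1^l z_2^{m-l}$ is homogeneous of degree $m$, and the vector field $e_-=-\bar z_2\partial_{z_1}+\bar z_1\partial_{z_2}$ trades a $z$-factor for a $\bar z$-factor and hence preserves real degree. Consequently $v^k_{(l,m-l)}=(e_-)^k z_1^l z_2^{m-l}$ is homogeneous of degree $m$, and so are both components $k\,v^{k-1}_{(l,m-l)}$ and $-v^k_{(l,m-l)}$ of $\phi^{+(m,l,k)}$. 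For $\phi^{-(m,l,k)}$, the prefactor $(1/|z|^2)^{m+2}$ contributes $-2(m+2)$; by $w^k_{(l,m-l)}=(-1)^k\tfrac{l!}{(m-k)!}v^{m-l}_{(k,m-k)}$ the polynomial entries $w^k_{(m+1-l,l)}$ and $w^k_{(m-l,l+1)}$ are each homogeneous of degree $m+1$; the sum $-2(m+2)+(m+1)=-(m+3)$ yields the stated eigenvalue.

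For part (3), part (2) shows that the basis $\{\phi^{\pm(m,l,k)}\}$ diagonalizes $\mathbf{n}_0$, so a finite linear combination is sent to a finite linear combination of the \emph{same} basis vectors, whence $\mathbf{n}_0\varphi\in\mathbf{C}[\phi^{\pm}]$. The vanishing of the trace integral then drops out of (\ref{001coefficient}): the $\phi^{+(0,0,1)}$-coefficient of $\mathbf{n}_0\varphi$ is $\tfrac{0}{2}\,C_{+(0,0,1)}=0$, so $\int_{S^3}tr(\mathbf{n}_0\varphi)\,d\sigma=4\pi^2\,\mathrm{Re}\,0=0$. No step presents a genuine obstacle; the only place requiring care is the negative-degree family, where the exponent $-(m+3)/2$ must be assembled by combining the homogeneity $m+1$ of the $w^k$-polynomials with the factor $|z|^{-2(m+2)}$.
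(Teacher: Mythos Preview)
Your argument is correct and follows essentially the same route as the paper: the paper's proof is extremely terse, omitting (1) entirely, dispatching (2) with ``follows from the definition (\ref{basespinor})'', and reducing (3) to (\ref{001coefficient}) together with the vanishing of the $\phi^{+(0,0,1)}$-coefficient of $\mathbf{n}_0\varphi$. Your version simply spells out the homogeneity bookkeeping that the paper leaves implicit.
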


\begin{proof}~~~
The formula (\ref{normalder}) follows from the definition (\ref{basespinor}).     The last assertion follows from (\ref{001coefficient}) and the fact that 
 the coefficient of \(\phi^{+(0,0,1)} \) in  the Laurent expansion of \(\mathbf{n}_0\varphi\)   vanishes.
 \end{proof}
 
\begin{proposition}\label{deriv}
Let \(c\) denote one of the 2-cocycles \(c_k\), \(k=0,1,2\).   Then we have 
\begin{equation}
c(\,\mathbf{n}_0\phi_1\,,\phi_2\,)\,+\,c(\,\phi_1\,, \mathbf{n}_0\phi_2\,)\,=\,0\,.
\end{equation}
\end{proposition}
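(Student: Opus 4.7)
The plan is to reduce the derivation property of $\mathbf{n}_0$ with respect to each $c_k$ to three facts: $\mathbf{n}_0$ commutes with the angular operator $\Theta_k$, $\mathbf{n}_0$ is a derivation of the spinor product (Proposition \ref{typepreserve}(1)), and the $S^3$-integral of $tr(\mathbf{n}_0 \varphi)$ vanishes for Laurent polynomial type $\varphi$ (Proposition \ref{typepreserve}(3)).

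First I would establish $[\mathbf{n}_0, \Theta_k] = 0$ for $k=0,1,2$. Since $\Theta_k$ acts componentwise as $\tfrac{1}{2}\theta_k$ and $\mathbf{n}_0$ does the same with $\tfrac{1}{2}(\nu+\bar\nu)$, it suffices to verify $[\mathbf{n}_0, \theta_k]=0$ as vector fields on $\mathbf{C}^2$. Conceptually this is immediate: the $\theta_k$ are infinitesimal generators of the right $SU(2)$-action on $\mathbf{C}^2$, which preserves $|z|^2$, so they commute with the Euler scaling field $\mathbf{n}_0$. Alternatively, evaluating on a monomial $z_1^a z_2^b \bar z_1^c \bar z_2^d$, one checks at once that $\mathbf{n}_0$ acts by the scalar $\tfrac{1}{2}(a+b+c+d)$ while each $\theta_k$ preserves the total degree, so the two commute.

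Combining this commutation with the Leibniz rule (\ref{leibnitz1}) gives
\begin{equation*}
\mathbf{n}_0\bigl(\Theta_k \phi_1 \cdot \phi_2\bigr)
= \mathbf{n}_0(\Theta_k \phi_1)\cdot\phi_2 + \Theta_k \phi_1\cdot \mathbf{n}_0\phi_2
= \Theta_k(\mathbf{n}_0 \phi_1)\cdot\phi_2 + \Theta_k \phi_1\cdot \mathbf{n}_0\phi_2.
\end{equation*}
Since $\mathbf{n}_0$ is a real operator acting componentwise, it commutes with $tr$. Applying $tr$ and integrating over $S^3$ therefore yields
\begin{equation*}
\int_{S^3} tr\bigl(\mathbf{n}_0(\Theta_k \phi_1\cdot\phi_2)\bigr)\, d\sigma
= 2\pi^2\bigl(\,c_k(\mathbf{n}_0 \phi_1,\phi_2) + c_k(\phi_1,\mathbf{n}_0 \phi_2)\,\bigr).
\end{equation*}
The spinor $\Theta_k \phi_1\cdot\phi_2$ is itself of Laurent polynomial type (the $\theta_k$ preserve bidegree in $(z,\bar z)$ and hence preserve $\mathbf{C}[\phi^{\pm}]$, and the product then lies in the algebra $\mathbf{C}[\phi^{\pm}]\vert_{S^3}$), so the left-hand side vanishes by Proposition \ref{typepreserve}(3). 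This gives the claimed identity.

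There is no serious obstacle: the only genuine input is the commutation $[\mathbf{n}_0,\theta_k]=0$, which is the infinitesimal statement that the scaling action and the $SU(2)$-action on $\mathbf{C}^2$ commute. Everything else is the derivation property of $\mathbf{n}_0$ together with the vanishing integral already proved for Laurent polynomial type spinors.
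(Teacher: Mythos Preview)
Your argument is correct and follows the same route as the paper: establish $[\mathbf{n}_0,\Theta_k]=0$, apply the Leibniz rule (\ref{leibnitz1}) to $\mathbf{n}_0(\Theta_k\phi_1\cdot\phi_2)$, and use the vanishing of $\int_{S^3}tr(\mathbf{n}_0\varphi)\,d\sigma$ from Proposition~\ref{typepreserve}(3). The paper checks the commutation only for $\theta$ via the identity $(\nu-\bar\nu)(\nu+\bar\nu)=\nu^2-\bar\nu^2$, whereas your $SU(2)$/degree argument handles all three $\theta_k$ at once, but otherwise the proofs are essentially identical.
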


\begin{proof}~~~
Since \(\,\theta\,\mathbf{n}_0\,=\,(\nu-\Bar{\nu})(\nu+\Bar{\nu})=\nu^2-\Bar{\nu}^2=\,\mathbf{n}_0\,\theta\,\), we have
\begin{eqnarray*}
0&=&\int_{S^3}\,tr\,(\,\mathbf{n}_0(\Theta\phi_1\cdot\phi_2)\,)\,d\sigma=\int_{S^3}\,tr\,(\,(\mathbf{n}_0\Theta\phi_1)\cdot \phi_2+\Theta\phi_1\cdot \mathbf{n}_0\phi_2\,)\,d\sigma\\[0.2cm]
&=&\int_{S^3}\,tr\,((\Theta\, \mathbf{n}_0\,\phi_1)\cdot\phi_2\,)\,d\sigma+\int_{S^3}\,tr\,(\Theta\phi_1\cdot \mathbf{n}_0\phi_2\,)\,d\sigma\\[0.2cm]
&=& c(\mathbf{n}_0\phi_1,\phi_2)+c(\phi_1,\mathbf{n}_0\phi_2)\,.
\end{eqnarray*}
   \end{proof}

 \subsection{Homogeneous decomposition of \(\mathbf{C}[\phi^{\pm}]\)}

Let  \(\mathbf{C}[\phi^{\pm};\,N]\) be the subspace of \(\mathbf{C}[\phi^{\pm}]\) consisting of those elements that are of homogeneous degree \(N\): \(\varphi(z)=|z|^N\varphi(\frac{z}{|z|})\).   
\(\mathbf{C}[\phi^{\pm};\,N]\) is spanned by the spinors  
\(\varphi=\phi_1\cdots\phi_n \) such that each \(\phi_i\) is equal to  \(\phi_i=\phi^{+(m_i,l_i,k_i)}\) or  \(\,\phi_i=\phi^{-(m_i,l_i,k_i)}\) , where \(m_i\geq 0\) and \(0\leq l_i\leq m_i+1, \,0\leq k_i\leq m_i+2\), and such that 
\[N=\sum_{i:\,\phi_i=\phi^{+(m_i,l_i,k_i)}}\,m_i\,\,-\,\sum_{i:\,\phi_i=\phi^{-(m_i,l_i,k_i)}}\,(m_i+3).\]
It holds that \(\mathbf{n}_0\varphi=\frac{N}{2}\varphi\), so the eigenvalues of \(\mathbf{n}_0\) on \(\mathbf{C}[\phi^{\pm}]\) are \(\left\{\frac{N}{2};\,N\in\mathbf{Z}\,\right\}\) and 
\(\mathbf{C}[\phi^{\pm};\,N]\) is the space of eigenspinors for the eigenvalue \(\frac{N}{2}\).    

{\bf Example }
\[\phi=\phi^{+(2,00)}\cdot\phi^{-(0,00)}\in \mathbf{C}[\phi^{\pm};-1]\,,\quad\mbox{ and }\, \mathbf{n}_0\phi=-\frac12\phi\,.\]
We note that \(-\frac12\) is not an eigenvalue of \(\Do\).

We have the eigenspace decomposition by the radial operator \(\mathbf{n}_0\):
\begin{equation}
\mathbf{C}[\phi^{\pm}]\,=\, \bigoplus_{N\in \mathbf{Z}}\,\mathbf{C}[\phi^{\pm};\,N]\,.
\end{equation}

\section{Extensions of the Lie algebra   \(\mathbf{C}[\phi ^{\pm}] \otimes_{\mathbf{C}} \mathfrak{gl}(n,\mathbf{C})\)}

\subsection{Extension of  the Lie algebra \(Map( S^3, \mathfrak{gl}(n,\mathbf{H}))\) }

Let \(\mathfrak{gl}(n,\mathbf{H})\) be the algebra of \(n\times n\)-matrices with entries in \(\mathbf{H}\).    
  We put 
\begin{eqnarray}\label{mj2nc}
MJ(2n,\mathbf{C})&=&\left\{Z\in \mathfrak{gl}(2n,\mathbf{C})\,,\quad JZ=\overline ZJ\right\}\\[0.2cm]
&=& \left\{\left(\begin{array}{cc}A& -\overline B\\[0.2cm]
B&\overline A\end{array}\right)\,;\quad A,\,B\in \mathfrak{gl}(n,\mathbf{C})\,\right\}.\nonumber
\end{eqnarray}
where 
\[J=\left(\begin{array}{cc}0&I_n\\[0.2cm]-I_n&0\end{array}\right)\,\in\,GL(2n,\mathbf{R}).\]
Then \(\mathfrak{gl}(n,\mathbf{H})\) and \(MJ(2n,\mathbf{C})\) are \(\mathbf{C}\)-isomorphic matrix algebra:
\begin{equation}\label{Cliso}
\mathfrak{gl}(n,\mathbf{H})\,\simeq \,MJ(2n,\mathbf{C})\,,\quad A+JB\longrightarrow 
\left(\begin{array}{cc}A& -\overline B\\[0.2cm]
B&\overline A\end{array}\right)\,.
\end{equation}
Now the \(\mathbf{C}\)-vector spaces \(MJ(2n,\mathbf{C})\) and \(MJ(2,\mathbf{C})\otimes_{\mathbf{C}}\mathfrak{gl}(n,\mathbf{C})\) are isomorphic.   The \(\mathbf{C}\)-linear isomorphism is given by the following transformation of \(2n\times 2n \)-matrices: 
\begin{eqnarray}
&MJ(2,\mathbf{C})\otimes_{\mathbf{C}}\mathfrak{gl}(n,\mathbf{C})\,\ni\,
\sum_{i,j=1}^n\left\{\,a_{ij}E^{\prime}_{2i-1\,2j-1}\,+\,b_{ij}E^{\prime}_{2i\,2j-1}\,-\,\overline b_{ij}E^{\prime}_{2i-1\,2j}\,+\,
\overline a_{ij}E^{\prime}_{2i\,2j}\,\right\}\nonumber \\[0.2cm]
&\longrightarrow \sum_{i,j=1}^n\,\left\{\,a_{ij}E_{ij}+\,b_{ij}E_{n+i,j}\,-\,\overline b_{ij}E_{i,n+j}+\overline a_{ij}E_{n+i,n+j}\right\}\in\,MJ(2n,\mathbf{C})\,.
\end{eqnarray}
Where the \(2n\times 2n \)-matrix base of \(MJ(2,\mathbf{C})\otimes_{\mathbf{C}}\mathfrak{gl}(n,\mathbf{C})\)  is written by
\begin{eqnarray*}
E^{\prime}_{2i-1\,2j-1}&=\left(\begin{array}{cc}1&0\\0&0\end{array}\right)\otimes E_{ij},\quad E_{2i-1\,2j}^{\prime}&=
 \left(\begin{array}{cc}0&1\\0&0\end{array}\right)\otimes E_{ij}, \\[0.2cm]
  E_{2i\,2j-1}^{\prime}&=
 \left(\begin{array}{cc}0&0\\1&0\end{array}\right)\otimes E_{ij}
, \quad E_{2i\,2j}^{\prime}&=\left(\begin{array}{cc}0&0\\0&1\end{array}\right)\otimes E_{ij}\,.
\end{eqnarray*}
From (\ref{qtomj}) and (\ref{Cliso}) we have  the following \(\mathbf{C}\)-vector space isomorphisms:
\begin{equation}\label{c}
\mathfrak{gl}(n,\mathbf{H})\,\simeq\,MJ(2n,\mathbf{C})\simeq MJ(2,\mathbf{C})\otimes_{\mathbf{C}}\mathfrak{gl}(n,\mathbf{C})\simeq \mathbf{H}\otimes_{\mathbf{C}}\mathfrak{gl}(n,\mathbf{C})\,.\end{equation}
By the matrix multiplication each space becomes an associative algebra.     

We define the following bracket on \( \mathfrak{gl}(n,\mathbf{H})\):
 \begin{eqnarray}
 \left[ X_1+JY_1,\,X_2+JY_2\,\right]\,&=&\,(X_1X_2-X_2X_1- \overline Y_1Y_2 + \overline Y_2Y_1)
 \nonumber \\[0.2cm]
\quad  &&+J(Y_1X_2-Y_2X_1+\overline X_1Y_2-\overline X_2Y_1)\,,\label{bracket1}
 \end{eqnarray}
 for \(X_1+JY_1,\,X_2+JY_2\in\mathfrak{gl}(n,\mathbf{H})\).    By (\ref{c}) it has the description    
 \begin{equation}\label{rLbracket}
 \left[\,z_1\otimes X\,,\,z_2\otimes Y\,\right]\,=\, (z_1z_2)\otimes XY\,-\,(z_2z_1)\otimes YX\,,
 \end{equation}
 for any bases \(X,\,Y\) of \(\mathfrak{gl}(n,\mathbf{C})\) and \(z_1,\,z_2\in\mathbf{H}\).   
 More conveniently,   if we write  \(E_{ij}\)  the \(n\times n\)-matrix with entry \(1\) at \((i,j)\)-place and \(0\) otherwise, 
\(\{E_{ij}\}_{i,j}\) is the basis  of \(\mathfrak{gl}(n,\mathbf{C})\) and we have
  \begin{equation}
 \left[\,z_1\otimes E_{ij}\,,\,z_2\otimes E_{kl}\,\right]\,=\, (z_1z_2)\otimes \delta_{jk}E_{il}\,-\,(z_2z_1)\otimes \delta_{il}E_{kj}\,,
 \end{equation}
 for \(z_1, z_2\in\mathbf{H}\).     
It is easy to see that thus defined \(\mathbf{R}\)-linear bracket satisfies the antisymmetry equation and the Jacobi identity.    From (\ref{bracket1}) we see that the involution \(\sigma\) acts on \( \mathfrak{gl}(n,\mathbf{H})\).   Therefore \( \mathfrak{gl}(n,\mathbf{H})\) is a quaternion Lie algebra.      

 We proceed to the algebra of \(\mathfrak{gl}(n,\mathbf{H})\)-valued mappings over \(S^3\): 
   \begin{equation}
 S^3\mathfrak{gl}(n,\mathbf{H})\,=\,   Map( S^3,\,\mathfrak{gl}(n,\mathbf{H})\,)\,=\,S^3\mathbf{H}\otimes_{\mathbf{C}} \mathfrak{gl}(n,\mathbf{C})\,.
 \end{equation}
    Any element is denoted by  
\(F+JG\in S^3\mathfrak{gl}(n,\mathbf{H})\) with \(F,\,G\in S^3\mathfrak{gl}(n,\mathbf{C})\): 
\begin{equation}
 S^3\mathfrak{gl}(n,\mathbf{H})\,=\,S^3\mathfrak{gl}(n,\mathbf{C})\,+\,J(S^3\mathfrak{gl}(n,\mathbf{C}))\,.
 \end{equation}
Or we denote rather  \(\,\phi\otimes X\in S^3\mathfrak{gl}(n,\mathbf{H})\) with \(\phi\in S^3\mathbf{H}\) and \(X\in \mathfrak{gl}(n,\mathbf{C})\).
Thus we see that 
\( S^3\mathfrak{gl}(n,\mathbf{H})\)  endowed with the following bracket  \(\,[\,\,,\,\,]_{S^3\mathfrak{gl}(n,\mathbf{H})}\) becomes a quaternion Lie algebra.
 \begin{equation}\label{glbracet}
 [\,\phi \otimes E_{ij}\, , \,\psi  \otimes E_{kl}\,]_{ S^3\mathfrak{gl}(n,\mathbf{H})} =  ( \phi\cdot\psi ) \otimes \delta_{jk}E_{il} \,
- (\psi \cdot \phi ) \otimes \delta_{il}E_{kj}
 , 
\end{equation}
for \(\phi,\,\psi\,\in S^3\mathbf{H}\simeq C^{\infty}(S^3, S^+) \) .   

We will write the invariant bilinear form ( Killing form ) on  \( \mathfrak{gl}(n,\mathbf{C})\) by 
 \[(X \vert Y)=\,Trace\,(XY) .\]  
 We have  \((XY\vert Z)=(YZ\vert X)\).   
 It holds that  \((E_{ij}\vert E_{kl})=\delta_{ij}\delta_{jk}\).
Then we define \(\mathbf{C}\)-valued 2-cocycles on the Lie algebra   \( S^3\mathfrak{gl}(n,\mathbf{H})\,\) by 
\begin{equation}\label{trcocycle}
\tilde{c}_k(\,\phi_1\otimes X\,,\,\phi_2\otimes Y\,)=\,(X \vert Y)\,c_k(\phi_1,\phi_2)\,,\quad k=0,1,2.
\end{equation}
The 2-cocycle property follows from  Proposition  
\ref{2cocycle}.
  
 Let \(a_k\), \(k=0,1,2\), be three indefinite  numbers.    For each \(k=0,1,2\) there is a central extension of the Lie algebra   \( S^3\mathfrak{gl}(n,\mathbf{H})\,\) by the 1-dimensional center \(\mathbf{C}a_k\) associated to the 2-cocycle \(\tilde{c}_k\).  Here we define \(\sigma a_k=a_k\).   Summing-up the above we arrive at the following theorem.
 
 \begin{theorem}~~ 
 \begin{equation}
S^3\mathfrak{gl}(n,\mathbf{H})(a)\,=\, (\, S^3\mathbf{H} \otimes_{\mathbf{C}}  \mathfrak{gl}(n,\mathbf{C})\,)\oplus (\oplus_{k=0,1,2}\mathbf{C}a_k) 
\end{equation}
endowed with the following bracket is a quaternion Lie algebra.    
 \begin{eqnarray}
 [\,\phi \otimes X\, , \,\psi  \otimes Y\,]^{\,\widehat{\,}}
  &=&   ( \phi\cdot\psi ) \otimes XY \,
- (\psi \cdot \phi ) \otimes YX
+(X\vert Y)\, \sum_{k=0}^2\,\tilde{c}_k(\phi , \psi )\,a_k \,,\nonumber
\\[0,3cm] 
 [\,a_k\,, \,\phi\otimes X\,] ^{\,\widehat{\,}}&=&0\, ,\,\,k=0,1,2
  \end{eqnarray}
for  \(\phi,\,\psi\,\in S^3\mathbf{H} \, \) and any bases \(X,\,Y\in\mathfrak{gl}(n,\mathbf{C})\). 
\end{theorem}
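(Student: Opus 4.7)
The plan is to verify each axiom of Definition \ref{ql} for the extended bracket by reducing to properties already established for the base quaternion Lie algebra $S^3\mathfrak{gl}(n,\mathbf{H})$ (with bracket (\ref{glbracet})) and to Proposition \ref{2cocycle} on the cocycles $c_k$. Bilinearity is immediate and centrality of the $a_k$ holds by construction. Antisymmetry of the extended bracket follows from antisymmetry of the base bracket together with antisymmetry of $\tilde{c}_k$, which in turn uses (\ref{asym}) and the symmetry $(X|Y)=(Y|X)$ of the Killing form.

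The main content is the Jacobi identity on three elements $\xi_i=\phi_i\otimes X_i$. Since $[a_k,\,\cdot\,]^{\widehat{}}=0$, the central contribution appearing in $[\xi_1,\xi_2]^{\widehat{}}$ drops out of $[[\xi_1,\xi_2]^{\widehat{}},\xi_3]^{\widehat{}}$, so the cyclic sum splits as (i) the cyclic sum of triple brackets evaluated in $S^3\mathfrak{gl}(n,\mathbf{H})$, which vanishes because that is already a quaternion Lie algebra, and (ii) a purely central remainder. The remainder is a sum of terms of the form $(X_1X_2|X_3)\,c_k(\phi_1\phi_2,\phi_3)\,a_k$ and $(X_2X_1|X_3)\,c_k(\phi_2\phi_1,\phi_3)\,a_k$ together with their two cyclic permutations. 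Invoking cyclicity of trace, $(X_1X_2|X_3)=\mathrm{Tr}(X_1X_2X_3)=(X_2X_3|X_1)=(X_3X_1|X_2)$, and likewise for the reversed products, this remainder collapses to
\begin{equation*}
\mathrm{Tr}(X_1X_2X_3)\sum_k\bigl[c_k(\phi_1\phi_2,\phi_3)+c_k(\phi_2\phi_3,\phi_1)+c_k(\phi_3\phi_1,\phi_2)\bigr]a_k
\end{equation*}
minus the analogous expression with $\mathrm{Tr}(X_2X_1X_3)$ and the cyclic sum built from $\phi_2\phi_1,\phi_3\phi_2,\phi_1\phi_3$. Each bracketed sum vanishes by the 2-cocycle identity (\ref{cycle}) applied to $c_k$.

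Compatibility with the involution is verified by extending $\sigma$ to the center via $\sigma a_k=a_k$ and checking $\sigma[\xi_1,\xi_2]^{\widehat{}}=[\sigma\xi_1,\sigma\xi_2]^{\widehat{}}$. The non-central part is handled by the $\sigma$-compatibility already established for the base bracket; for the central part one needs $c_k(\sigma\phi_1,\sigma\phi_2)=c_k(\phi_1,\phi_2)$. This follows because $\sigma$ commutes with each $\Theta_k$ (since $\Theta_k$ acts by $\theta_k/2$ diagonally on both spinor components while $\sigma$ only flips the sign of the lower one), $\sigma$ is an automorphism of the associative product on $S^3\mathbf{H}$, and the trace map $\mathrm{tr}\,\phi=u+\bar u$ is $\sigma$-invariant, so the integrand defining $c_k$ is $\sigma$-invariant pointwise.

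I expect the main obstacle to be the bookkeeping in step (ii) of the Jacobi calculation: one must carefully match the three products of matrix triples against their cyclic images under the trace, and then ensure that the two families of terms (those built from the ordered product $X_1X_2X_3$ and those built from $X_2X_1X_3$) separate cleanly so that each family collapses via (\ref{cycle}) without mixing. Everything else is essentially bilinearity and the symmetry properties of the individual ingredients.
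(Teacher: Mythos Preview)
Your proposal is correct and follows the same approach that the paper invokes: it treats the theorem as the standard central-extension construction, with the 2-cocycle property of $\tilde{c}_k$ (reduced via the Killing form to Proposition~\ref{2cocycle}) supplying the Jacobi identity, and the base quaternion Lie algebra $S^3\mathfrak{gl}(n,\mathbf{H})$ handling the non-central part. The paper itself gives no explicit proof beyond ``Summing-up the above we arrive at the following theorem,'' so your write-up in fact supplies details the paper omits---in particular the verification that $c_k(\sigma\phi_1,\sigma\phi_2)=c_k(\phi_1,\phi_2)$, which is needed for axiom~(d) of Definition~\ref{ql} and which the paper passes over in silence.
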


\medskip

As a Lie subalgebra of \(S^3\mathfrak{gl}(n,\mathbf{H})\) we have the Lie algebra of \(\mathfrak{gl}(n,\mathbf{C})\)-valued Laurent polynomial type spinors: \(\,\mathbf{C}[\phi^{\pm}]\otimes_{\mathbf{C}}  \mathfrak{gl}(n,\mathbf{C})\).   The basis of  \(\mathbf
{C}\)-vector space \(\mathbf{C}[\phi^{\pm}]\otimes_{\mathbf{C}}  \mathfrak{gl}(n,\mathbf{C})\) consists of 
\begin{equation}
\phi^{\pm(m,l,k)}\otimes E_{ij}\,.
\end{equation}

\begin{definition}\label{currentgl}~~
\begin{enumerate}
\item We define a \(\mathbf{C}\)-vector space  
\begin{equation}
\widehat{\mathfrak{gl}(n,\mathbf{H})}\,=\,\mathbf{C}[\phi^{\pm}]\otimes_{\mathbf{C}}  \mathfrak{gl}(n,\mathbf{C})\,.
\end{equation}
\(\widehat{\mathfrak{gl}(n,\mathbf{H})}\) is endowed with the Lie bracket (\ref{rLbracket}).
\item
We denote by \(\,\widehat{\mathfrak{gl}(n,\mathbf{H})}(a)\,\) the extension of the Lie algebra \(\,\widehat{\mathfrak{gl}(n,\mathbf{H})}\,\) by the three 1-dimensional centers \(\mathbf{C}a_k\) associated to the cocycle \(c_k\,\):
\begin{equation}\label{extension}
\widehat{\mathfrak{gl}(n,\mathbf{H})}(a)=\mathbf{C}[\phi^{\pm}]\otimes_{\mathbf{C}}  \mathfrak{gl}(n,\mathbf{C})\,\oplus (\oplus_{k=0,1,2}\mathbf{C}a_k).
\end{equation}
This is a direct sum of \(\mathbf{C}\)-vector spaces.
 \end{enumerate}
\end{definition} 

\begin{proposition}
\(\widehat{\mathfrak{gl}(n,\mathbf{H})}\) and \(\,\widehat{\mathfrak{gl}(n,\mathbf{H})}(a)\,\) are quaternion Lie algebras.
\end{proposition}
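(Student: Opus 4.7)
The plan is to exhibit $\widehat{\mathfrak{gl}(n,\mathbf{H})}$ as a subobject of the already-established quaternion Lie algebra $S^3\mathfrak{gl}(n,\mathbf{H})$, and then to handle the triple central extension separately.

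First I would verify that $\widehat{\mathfrak{gl}(n,\mathbf{H})}=\mathbf{C}[\phi^{\pm}]\otimes_{\mathbf{C}}\mathfrak{gl}(n,\mathbf{C})$ is closed under the bracket (\ref{glbracet}). The bracket on a pair $\phi\otimes E_{ij},\psi\otimes E_{kl}$ only involves the pointwise products $\phi\cdot\psi$ and $\psi\cdot\phi$, and by the Proposition of Section 1.4 the restriction of $\mathbf{C}[\phi^{\pm}]$ to $S^3$ is an associative subalgebra of $S^3\mathbf{H}$; hence closure is immediate. Since $\mathbf{R}$-bilinearity, antisymmetry, and the Jacobi identity are inherited from $S^3\mathfrak{gl}(n,\mathbf{H})$, what remains for the quaternion axiom is $\sigma$-invariance. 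By the Corollary in Section 1.4, $\mathbf{C}[\phi^{\pm}]|_{S^3}$ is $\sigma$-invariant, hence $\widehat{\mathfrak{gl}(n,\mathbf{H})}=\bigl(\mathbf{C}[\phi^{\pm}]\cap V_0\bigr)\otimes\mathfrak{gl}(n,\mathbf{C})+\bigl(\mathbf{C}[\phi^{\pm}]\cap JV_0\bigr)\otimes\mathfrak{gl}(n,\mathbf{C})$ is a semi-submodule of $S^3\mathfrak{gl}(n,\mathbf{H})$. The identity $\sigma[X,Y]=[\sigma X,\sigma Y]$ then descends from $S^3\mathfrak{gl}(n,\mathbf{H})$.

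For $\widehat{\mathfrak{gl}(n,\mathbf{H})}(a)$ I would proceed in three steps. First, $\tilde{c}_k$ is $\mathbf{R}$-bilinear and antisymmetric by Proposition 1.2.9 combined with the symmetry of the Killing form. Second, to verify the Jacobi identity for the extended bracket it suffices to verify it for the central correction, since the unextended bracket already satisfies Jacobi; the terms involving $a_k$ reduce to the cyclic sum $(X_1|[X_2,X_3])\,c_k(\phi_1,\phi_2\cdot\phi_3)+\text{cyclic}$, which by invariance of the Killing form becomes a multiple of $c_k(\phi_1\cdot\phi_2,\phi_3)+c_k(\phi_2\cdot\phi_3,\phi_1)+c_k(\phi_3\cdot\phi_1,\phi_2)$, and this vanishes by (\ref{cycle}). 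Third, since $a_k$ is central and satisfies $\sigma a_k=a_k$, the identity $\sigma[X,Y]^{\widehat{\,}}=[\sigma X,\sigma Y]^{\widehat{\,}}$ reduces to two things: $\sigma$ preserves the bracket on $\widehat{\mathfrak{gl}(n,\mathbf{H})}$ (already shown) and $c_k(\sigma\phi_1,\sigma\phi_2)=c_k(\phi_1,\phi_2)$. The latter follows at once from $\Theta_k(\sigma\phi)=\sigma(\Theta_k\phi)$ (because $\sigma$ and $\Theta_k$ both act diagonally on the two components), from $\sigma(\phi\cdot\psi)=\sigma\phi\cdot\sigma\psi$, and from $\mathrm{tr}\,\sigma\chi=\mathrm{tr}\,\chi$.

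I expect the main technical point to be the third step in the central-extension argument: keeping track of the signs and of the mixing of cocycles when Jacobi is written out for three elements $\phi_i\otimes X_i$. All the ingredients, however, are in place: the cyclic identity (\ref{cycle}) for $c_k$ and the invariance $([X_1,X_2]|X_3)=(X_1|[X_2,X_3])$ of the Killing form between them force the central contribution to the Jacobiator to collapse, and $\sigma$-equivariance then follows mechanically from the previously checked compatibilities. No further calculation with the explicit basis $\{\phi^{\pm(m,l,k)}\}$ is required.
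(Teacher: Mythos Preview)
Your proposal is correct and follows the route the paper implicitly intends: the paper states this proposition without proof, immediately after establishing the quaternion Lie algebra structure on $S^3\mathfrak{gl}(n,\mathbf{H})(a)$ in Theorem~2.1, so the intended argument is precisely to inherit the structure using that $\mathbf{C}[\phi^{\pm}]\vert_{S^3}$ is a $\sigma$-invariant associative subalgebra of $S^3\mathbf{H}$. One small caution on your Jacobi sketch: the central contribution of $[\phi_1\otimes X_1,[\phi_2\otimes X_2,\phi_3\otimes X_3]]$ is not literally $(X_1\vert[X_2,X_3])\,c_k(\phi_1,\phi_2\phi_3)$ but rather $\mathrm{Tr}(X_1X_2X_3)\,c_k(\phi_1,\phi_2\phi_3)-\mathrm{Tr}(X_1X_3X_2)\,c_k(\phi_1,\phi_3\phi_2)$; after cycling, the two traces separately multiply two cyclic sums of the form~(\ref{cycle}), each of which vanishes, so your conclusion and the ingredients you cite are exactly right.
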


\subsection{The outer derivation of \(\widehat{\mathfrak{gl}(n,\mathbf{H})}(a)\)}

We introduced the radial derivative \(\mathbf{n}_0\) acting on \(S^3\mathbf{H}\) in (\ref{radialder}).   \(\mathbf{n}_0\) preserves the space of spinors of Laurent polynomial type \(\mathbf{C}[\phi^{\pm}]\), Proposition\ref{typepreserve}.   
The derivation \(\mathbf{n}_0\) on \(\mathbf{C}[\phi^{\pm}]\) is extended to an outer derivation of the Lie algebra \(\,\widehat{\mathfrak{gl}(n,\mathbf{H})}\,=\mathbf{C}[\phi^{\pm}]\otimes_{\mathbf{C}} \mathfrak{gl}(n,\mathbf{C})\,\):
\begin{equation}
   \,\mathbf{n}_0\,(\phi \otimes X\,)\,=\,(\mathbf{n}_0\phi \,)\otimes X,\qquad \,\phi\in \mathbf{C}[\phi^{\pm}],\,X\in \mathfrak{gl}(n,\mathbf{C})\,.
\end{equation}
In fact we have 
\begin{eqnarray*}
&&\,[\,\mathbf{n}_0(\phi_1\otimes X_1)\,,\,\phi_2\otimes X_2\,]^{\,\widehat{\,}}\,+\,
[\,\phi_1\otimes X_1\,,\,\mathbf{n}_0(\phi_2\otimes X_2)\,]^{\,\widehat{\,}}
  \\[0.2cm]
 && \,=\,(\mathbf{n}_0\phi_1 \cdot\phi_2)\otimes(X_1X_2)\,-\,(\phi_2\cdot \mathbf{n}_0\phi_1)\otimes (X_2X_1)
\,
 +\,(\phi_1 \cdot \mathbf{n}_0\phi_2)\otimes (X_1X_2)\,\\[0.2cm]
&&
 \qquad -\,(\mathbf{n}_0\phi_2\cdot \phi_1)\otimes (X_2X_1)\,   +\,(X_1\vert X_2)\sum_k\left(\,\tilde{c}_k(\mathbf{n}_0\phi_1,\,\phi_2)\,+\,\tilde{c}_k(\phi_1,\,\mathbf{n}_0\phi_2)\right)a_k\,.
\end{eqnarray*}  
 Since \(c_k(\mathbf{n}_0\phi_1,\,\phi_2)\,+\,c_k(\phi_1,\,\mathbf{n}_0\phi_2)=0\) from Proposition \ref{deriv}, the right-hand side  is equal to
\begin{equation*}
  \mathbf{n}_0\left(\,[\,\phi_1\otimes X_1\,,\,\phi_2\otimes X_2\,]^{\,\widehat{\,}}\,\,\right)
  \end{equation*}
by virtue of (\ref{leibnitz1}).    Hence  
 \begin{equation*}
\mathbf{n}_0\left(\,[\,\phi_1\otimes X_1\,,\,\phi_2\otimes X_2\,]^{\,\widehat{\,}}\,\right)
=
 [\,\mathbf{n}_0(\phi_1\otimes X_1)\,,\,\phi_2\otimes X_2\,]^{\,\widehat{\,}}\,+ 
[\,\phi_1\otimes X_1\,,\,\mathbf{n}_0(\phi_2\otimes X_2)\,]^{\,\widehat{\,}}\,.
\end{equation*}
Therefore \(\mathbf{n}_0\) is a derivation that acts on the Lie algebra \(\mathbf{C}[\phi^{\pm}]\otimes_{\mathbf{C}} \mathfrak{gl}(n,\mathbf{C})\,\).    \\

  We denote by \(\widehat{\mathfrak{gl}}\,\) the Lie algebra that is obtained by adjoining the outer  derivation \(\mathbf{n}_0\) to  \(\widehat{\mathfrak{gl}(n,\mathbf{H})}(a)\).   We let \(\mathbf{n}_0\) kill 
  \(a_k\), \(k=0,1,2\).
   More precisely we have the following
\begin{theorem}~~~
 Let \(a_k\), \(k=0,1,2\), and  \(\,\mathbf{n}\) be indefinite elements. We consider the \(\mathbf{C}\) vector space:
\begin{equation}
\widehat{\mathfrak{gl}}\,=\, \left(\mathbf{C}[\phi^{\pm}]\otimes_{\mathbf{C}} \mathfrak{gl}(n,\mathbf{C})\right) \oplus( \oplus_{k=0}^2\,\mathbf{C}\, a_k )\oplus (\mathbf{C}\mathbf{n})\,,
\end{equation}
 and define the following \(\mathbf{R}\)-linear bracket on $\widehat{\mathfrak{gl}}\):   
  For \(X,Y\in \mathfrak{gl}(n,\mathbf{C})\) and \(\phi,\,\psi\,\in \,\mathbf{C}[\phi^{\pm}]\, \), 
 \begin{eqnarray}
&& [\,\phi \otimes X\, , \,\psi  \otimes Y\,]_{\widehat{\mathfrak{gl}}} \,=\,
  [\,\phi \otimes X\, , \,\psi  \otimes Y\,]^{\,\widehat{\,}}    \nonumber\\[0.2cm]
  &&\quad =  (\phi\cdot\psi)\otimes\,(XY) - (\psi \cdot \phi) \otimes (YX)+ (X|Y)\, \sum_{k=0}^2\,\tilde{c}_k(\phi , \psi )\,a_k \, , 
\\[0,2cm] 
&&\, [\,a_k\,, \,\phi\otimes X\,] _{\widehat{\mathfrak{gl}}}\,=0\,,\\[0.2cm]
&& [\,\mathbf{n}, \phi \otimes X\,] _{\widehat{\mathfrak{gl}}}=\,\mathbf{n}_0\phi \otimes X\,, \, \, 
   [\,\mathbf{n}\,,\,a_k\,]_{\widehat{\mathfrak{gl}}}\,=0,\quad k=0,1,2\,.
  \end{eqnarray}
We introduce the involution on  \( \widehat{\mathfrak{gl}}\,\) by 
\begin{equation}
\sigma(\,\phi\otimes X)=\sigma\phi\otimes X\,,\quad \sigma a_k=0\,, \quad \sigma \mathbf{n}=\mathbf{n}\,.
\end{equation}
      Then \( \left(\, \widehat{\mathfrak{gl}} \, , \, [\,\cdot,\cdot\,]_{ \widehat{\mathfrak{gl} } } \,\right) \) becomes a quaternion Lie algebra.
\end{theorem}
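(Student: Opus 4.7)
The plan is to verify directly the four axioms listed in Definition \ref{ql}: $\mathbf{R}$-bilinearity, antisymmetry, the Jacobi identity, and compatibility with the involution $\sigma$. Since the quaternion Lie algebra structure on the subspace $\widehat{\mathfrak{gl}(n,\mathbf{H})}(a)\subset\widehat{\mathfrak{gl}}$ has already been established in the preceding theorem, the new content reduces to checking the axioms for the brackets that involve the newly adjoined symbol $\mathbf{n}$.

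I would first dispose of $\mathbf{R}$-bilinearity, which is immediate from the definitions, and of antisymmetry, which is already known for brackets internal to $\widehat{\mathfrak{gl}(n,\mathbf{H})}(a)$; for the new brackets one simply reads off antisymmetry from the prescription $[\mathbf{n},\phi\otimes X]_{\widehat{\mathfrak{gl}}}=\mathbf{n}_0\phi\otimes X$ (with the implicit $[\phi\otimes X,\mathbf{n}]_{\widehat{\mathfrak{gl}}}=-\mathbf{n}_0\phi\otimes X$) and $[\mathbf{n},a_k]_{\widehat{\mathfrak{gl}}}=0$.

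The main substance is the Jacobi identity, which I would handle by case analysis on the triple of arguments. The case of three elements inside $\widehat{\mathfrak{gl}(n,\mathbf{H})}(a)$ is given. Any triple containing some $a_k$ has vanishing Jacobi sum, because $a_k$ is central and $[\mathbf{n},a_k]_{\widehat{\mathfrak{gl}}}=0$. A triple containing two copies of $\mathbf{n}$ vanishes by antisymmetry. The essential and only nontrivial new case is $(\mathbf{n},\,\phi_1\otimes X_1,\,\phi_2\otimes X_2)$, where Jacobi amounts to the assertion that $\mathbf{n}_0$ is a derivation of $[\,\cdot\,,\,\cdot\,]^{\,\widehat{\,}}$. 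This is the heart of the theorem, but it is exactly the computation carried out in the paragraph immediately preceding the theorem statement: it combines the Leibniz rule \eqref{leibnitz1} for $\mathbf{n}_0$ on the spinor product with the vanishing $c_k(\mathbf{n}_0\phi_1,\phi_2)+c_k(\phi_1,\mathbf{n}_0\phi_2)=0$ supplied by Proposition \ref{deriv}. Since this is already in hand, no further obstacle remains in this step.

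Finally, for $\sigma$-equivariance $\sigma[X,Y]_{\widehat{\mathfrak{gl}}}=[\sigma X,\sigma Y]_{\widehat{\mathfrak{gl}}}$ I would proceed case by case. For $[\phi\otimes X,\psi\otimes Y]_{\widehat{\mathfrak{gl}}}$ it reduces to two facts: that $\sigma$ commutes with the algebra product on $S^3\mathbf{H}$ (a one-line verification from the explicit formula for $\phi_1\cdot\phi_2$), and that the cocycles satisfy $c_k(\sigma\phi_1,\sigma\phi_2)=c_k(\phi_1,\phi_2)$. The latter follows from the identity $\sigma\circ\Theta_k=\Theta_k\circ\sigma$ (immediate from the definition of $\Theta_k$) together with $\mathrm{tr}\,\sigma\phi=\mathrm{tr}\,\phi$ and the prescribed action on $a_k$. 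The brackets involving $\mathbf{n}$ require the commutation $\sigma\circ\mathbf{n}_0=\mathbf{n}_0\circ\sigma$ on $\mathbf{C}[\phi^{\pm}]$, which is clear because $\mathbf{n}_0$ acts on each component of a spinor by the same scalar differential operator. Putting these pieces together, all four axioms hold and $\widehat{\mathfrak{gl}}$ is a quaternion Lie algebra.
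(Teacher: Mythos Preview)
Your proposal is correct and follows essentially the same approach as the paper: both reduce the new content to the Jacobi identity for the triple $(\mathbf{n},\,\phi_1\otimes X_1,\,\phi_2\otimes X_2)$, established via the Leibniz rule \eqref{leibnitz1} together with Proposition~\ref{deriv}. Your write-up is in fact more thorough than the paper's, which treats only this one Jacobi case explicitly and does not spell out the $\sigma$-equivariance check you include.
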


\begin{proof}~~~ It is enough to prove the following Jacobi identity:
 \begin{equation*}
 [\,[\,\mathbf{n}\,, \,\phi_1   \otimes X_1 \,]_{\widehat{\mathfrak{g}}}\,,\, \phi_2 \otimes X_2\,]_{\widehat{\mathfrak{g}}}
+[\,[\phi_1  \otimes X_1 , \phi_2 \otimes X_2\,]_{\widehat{\mathfrak{g}}}\,,\,\mathbf{n}\,]_{\widehat{\mathfrak{g}}}
\,+\,[\,[\phi_2 \otimes X_2 , \,\mathbf{n}\,]_{\widehat{\mathfrak{g}}} ,\, \phi _1   \otimes X_1\,]_{\widehat{\mathfrak{g}}}=0.
\end{equation*}
In the following we shall abbreviate the bracket \([\,,\,]_{\widehat{\mathfrak{gl}}}\,\) simply to \([\,\,,\,\,]\).    
  We have
\begin{align*}
[\,[\,\mathbf{n}\,, \,\phi_1  \otimes X_1 \,]\,, \phi_2 \otimes X_2\,] =&
[\,\mathbf{n}_0\phi_1\otimes X_1,\,\phi_2 \otimes X_2\,] \\[0.2cm]
=& \,( \,\mathbf{n}_0\phi_1\cdot \phi_2 )\otimes\,\,(X_1X_2 )
-\,(\,\phi_2\cdot \mathbf{n}_0\phi_1\,)\otimes (X_2X_1)\\[0.2cm]
&+
(X_1\vert X_2)\sum \tilde{c}_k(\,\mathbf{n}_0\phi_1\,,\,\phi_2)\, a_k\,.
\end{align*}
Similarly
\begin{align*}
[\,[ \,\phi_2  \otimes X_2, \,\mathbf{n} \,] , \phi_1 \otimes X_1\,]=&
(\phi_1\cdot \mathbf{n}_0\phi_2) \otimes (X_1X_2 ) 
-\,(\,\mathbf{n}_0\phi_2 \cdot\phi_1)\otimes (X_2X_1)\\[0.2cm]
&+
(X_1\vert X_2)\,\sum_k \tilde{c}_k(\phi_1,\,\mathbf{n}_0\phi_2\,)\, a_k\,.  \\[0.2cm]
[\,[\phi_1  \otimes X_1 , \phi_2  \otimes X_2\,]\,,\,\mathbf{n}\,]=&
-\bigl[\,\mathbf{n}\,,\,
(\phi_1\cdot \phi_2 )\otimes (X_1X_2 )-(\phi_2\cdot\phi_1)\otimes (X_2X_1)\\[0.2cm]
&\qquad +(X_1\vert X_2)\sum_k \tilde{c}_k (\phi_1\,, \phi_2)\,a_k\,\bigr]\\[0.2cm]
=&-\,\mathbf{n}_0(\phi_1\cdot\phi_2) \otimes\,(X_1X_2 )
+ \mathbf{n}_0(\phi_2\cdot\phi_1)\otimes (X_2X_1) \,.
\end{align*}
  The sum of three equations  vanishes by virtue of (\ref{leibnitz1}) and Proposition \ref{deriv}.   
 \end{proof}
 \par\medskip
 
 \begin{proposition}
  The centralizer of \(\,\mathbf{n}\) in \(\,\widehat{\mathfrak{gl}}\,\) is given by
\begin{equation}
(\,\mathbf{C}[\phi^{\pm};\,0]\,\otimes_{\mathbf{C}} \mathfrak{gl}(n.\mathbf{C})\,)\,\oplus \,( \oplus_k\mathbf{C}a_k\,)\oplus\mathbf{C}\mathbf{n}\,.
 \end{equation}
 \end{proposition}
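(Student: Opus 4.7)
The plan is to characterize the centralizer directly by exploiting the bracket formulas that involve $\mathbf{n}$ together with the homogeneous eigenspace decomposition
\[
\mathbf{C}[\phi^{\pm}]\,=\,\bigoplus_{N\in\mathbf{Z}}\mathbf{C}[\phi^{\pm};\,N],
\]
on which $\mathbf{n}_0$ acts as the scalar $N/2$ (by Proposition~\ref{typepreserve}). The key point is that the only nontrivial ``$\phi\otimes X$'' contributions to $[\mathbf{n},\,\cdot\,]$ come through the action of $\mathbf{n}_0$ on the spinor factor.

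First I would prove the easy inclusion ($\supset$). Elements of $\mathbf{C}[\phi^{\pm};0]$ are by definition killed by $\mathbf{n}_0$, so for any $\phi\in \mathbf{C}[\phi^{\pm};0]$ and $X\in\mathfrak{gl}(n,\mathbf{C})$ one has $[\mathbf{n},\phi\otimes X]_{\widehat{\mathfrak{gl}}}=\mathbf{n}_0\phi\otimes X=0$; and $[\mathbf{n},a_k]_{\widehat{\mathfrak{gl}}}=0$, $[\mathbf{n},\mathbf{n}]_{\widehat{\mathfrak{gl}}}=0$ hold by definition. By $\mathbf{R}$-linearity of the bracket in its second argument, every element of the right-hand subspace centralizes $\mathbf{n}$.

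For the reverse inclusion ($\subset$), let $\{X_i\}$ be a basis of $\mathfrak{gl}(n,\mathbf{C})$. Any $x\in\widehat{\mathfrak{gl}}$ can be written uniquely in the form
\[
x\,=\,\sum_{N\in\mathbf{Z}}\sum_{i}\,\phi_{N,i}\otimes X_i\,+\,\sum_{k=0}^{2}\lambda_k a_k\,+\,\mu\mathbf{n},
\]
with $\phi_{N,i}\in \mathbf{C}[\phi^{\pm};N]$, only finitely many nonzero, and $\lambda_k,\mu\in\mathbf{C}$. Applying the bracket relations defining $\widehat{\mathfrak{gl}}$, together with $\mathbf{n}_0\phi_{N,i}=\tfrac{N}{2}\phi_{N,i}$, gives
\[
[\mathbf{n},x]_{\widehat{\mathfrak{gl}}}\,=\,\sum_{N\in\mathbf{Z}}\sum_{i}\,\frac{N}{2}\,\phi_{N,i}\otimes X_i.
\]
Assuming $[\mathbf{n},x]_{\widehat{\mathfrak{gl}}}=0$, the linear independence of $\{X_i\}$ yields $\sum_{N}\tfrac{N}{2}\phi_{N,i}=0$ in $\mathbf{C}[\phi^{\pm}]$ for each $i$; by the directness of the homogeneous decomposition, each term $\tfrac{N}{2}\phi_{N,i}$ must vanish separately, forcing $\phi_{N,i}=0$ for all $N\neq 0$. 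Hence $x\in (\mathbf{C}[\phi^{\pm};0]\otimes_{\mathbf{C}}\mathfrak{gl}(n,\mathbf{C}))\oplus(\oplus_k\mathbf{C}a_k)\oplus\mathbf{C}\mathbf{n}$, as claimed.

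There is essentially no serious obstacle here: the statement reduces to an eigenspace computation for $\mathbf{n}_0$ once one unpacks the definition of the bracket with $\mathbf{n}$. The only point requiring a little care is to invoke the direct-sum property of the decomposition $\mathbf{C}[\phi^{\pm}]=\bigoplus_N\mathbf{C}[\phi^{\pm};N]$ together with the linear independence of the matrix basis $\{X_i\}$ in order to separate components before concluding $\phi_{N,i}=0$.
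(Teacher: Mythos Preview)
Your argument is correct. The paper states this proposition without proof, simply following it with a reminder of what the subspace $\mathbf{C}[\phi^{\pm};0]$ is; the result is treated as an immediate consequence of the bracket relations $[\mathbf{n},\phi\otimes X]_{\widehat{\mathfrak{gl}}}=\mathbf{n}_0\phi\otimes X$, $[\mathbf{n},a_k]_{\widehat{\mathfrak{gl}}}=0$ together with the eigenspace decomposition $\mathbf{C}[\phi^{\pm}]=\bigoplus_{N\in\mathbf{Z}}\mathbf{C}[\phi^{\pm};N]$ under $\mathbf{n}_0$. Your write-up spells out exactly this computation, and the care you take in separating components via the direct sum and the linear independence of the $X_i$ is appropriate.
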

 We remember that \(\,\mathbf{C}[\phi^{\pm};\,0]\) is the subspace in \(\mathbf{C}[\phi^{\pm}]\) generated by \(\phi_1\cdots\phi_n\) with \(\phi_i\) being \( \phi_i=\phi^{\pm(m_i,l_i,k_i)} \) such that 
 \[\sum_{i;\,\phi_i=\phi^{+(m_i,l_i,k_i)} }\,m_i-\sum_{i;\,\phi_i=\phi^{-(m_i,l_i,k_i)}}\,m_i=0\,.\]
 
\section{\(\mathfrak{sl}(n,\mathbf{H})\)-current algebras on \(S^3\)}

\subsection{Preliminaries on \(\mathfrak{sl}(n,\mathbf{H})\) }

Let \(\mathfrak{sl}(n, \mathbf{H})\) denote the quaternion special linear algebra:
\begin{equation}
\mathfrak{sl}(n, \mathbf{H})\,=\,\left\{\,Z\in\,\mathfrak{gl}(n, \mathbf{H})\,;\quad tr\,Z=0\,\right\}.
\end{equation}
(\ref{Cliso}) implies that 
   \(\mathfrak{sl}(n, \mathbf{H})\) is \(\mathbf{C}\)-isomorphic to 
\begin{equation}\label{slH}
 \left\{\left(\begin{array}{cc}A&-\overline B\\[0.2cm]
B&\overline A\end{array}\right)\in MJ(2n,\mathbf{C}):\quad tr\,A\in\sqrt{-1}\mathbf{R}\,\right\}\,.
\end{equation}
If we put 
\begin{equation}
\mathfrak{sk}(n,\mathbf{C})=\{A\in \mathfrak{gl}(n,\mathbf{C})\,;\quad tr\,A\in \sqrt{-1}\mathbf{R}\,\},
\end{equation}
then  \( \mathfrak{sl}(n,\mathbf{H})\,=\, \mathfrak{sk}(n,\mathbf{C})+\,J \mathfrak{gl}(n,\mathbf{C})\) 
as a complex \(\mathbf{C}\)-module, and  
we have the \(\mathbf{C}\)-linear isomorphism:
\begin{equation}
\mathfrak{sl}(n,\mathbf{H})\ni A+JB\,\longrightarrow\,\left(\begin{array}{cc}A&-\overline B\\[0.2cm]
B&\overline A\end{array}\right)\in MJ(2n,\mathbf{C}),\quad A\in \mathfrak{sk}(n,\mathbf{C}).
\end{equation}
Let  \(\mathfrak{sl}(n, \mathbf{C})\) be the complex special linear algebra.
We have the following relations of \(\mathbf{C}\)-submodules:
\begin{equation}
\mathbf{H}\otimes_{\mathbf{C}} \mathfrak{sl}(n,\mathbf{C})\,
\stackrel{\flat}{= } \,\mathfrak{sl}(n,\mathbf{C})+\,J\mathfrak{sl}(n,\mathbf{C})\,\subset \mathfrak{sk}(n,\mathbf{C})+\,J\mathfrak{gl}(n,\mathbf{C})=
\mathfrak{sl}(n,\mathbf{H})\,,
\end{equation}
where \(\stackrel{\flat}{= }\) is given by the correspondence (\ref{Cliso}).
\(\mathbf{H}\otimes_{\mathbf{C}} \mathfrak{sl}(n,\mathbf{C})\) is not a Lie algebra but a \(\mathbf{H}\)-submodule of \(\mathfrak{gl}(n,\mathbf{H})\).     
\(\mathfrak{sl}(n,\mathbf{H})=\mathfrak{sk}(n,\mathbf{C})\,+\,J \mathfrak{gl}(n,\mathbf{C})\) is 
a semi-submodule of \(\mathfrak{gl}(n,\mathbf{H})\), and  \(\mathfrak{sl}(n,\mathbf{H})\) is the quaternification of \(\mathfrak{sk}(n,\mathbf{C})\). As for a {\it semi-submodule } and a {\it quaternification} see the discussion above Definition\ref{ql}.     Actually we show in the next Proposition that \(\mathfrak{sl}(n,\mathbf{H})\) is generated, as a Lie subalgebra of \(\mathfrak{gl}(n,\mathbf{H})\),  by \(\mathfrak{sl}(n,\mathbf{C})\) and \(J\mathfrak{sl}(n,\mathbf{C})\).   So \(\mathfrak{sl}(n,\mathbf{H})\) is also adequate to be called the quaternification of \(\mathfrak{sl}(n,\mathbf{C})\), 

Let  \(\mathfrak{h}\) be the diagonal matrices of \(\mathfrak{sl}(n,\mathbf{C})\).    
\(\mathfrak{h}\) is a Cartan subalgebra of  \(\mathfrak{sl}(n,\mathbf{C})\), and  \(ad(H)E_{ij}=(\lambda_i-\lambda_j)E_{ij}\) for \(H=\sum^n\lambda_iE_{ii}\in\mathfrak{h}\) such that \(\,\sum^n\lambda_j=0\).   So the set of roots of \(\mathfrak{sl}(n,\mathbf{C})\) with respect to \(\mathfrak{h}\) consist of the \((n-1)n+1\) elements:
\(\,0\,,\,\Delta=\{\alpha_{ij}\,\}\), 
where \(\alpha_{ij}(H)=\lambda_i-\lambda_j\,\) for any \(H=\sum^n\lambda_iE_{ii}\in\mathfrak{h}\).   
Let  \(\mathfrak{sl}(n,\mathbf{C})= \mathfrak{h}\oplus \sum_{i\neq j}\,\mathfrak{g}_{\alpha_{ij}}\) be the root space decomposition with  \(\,\mathfrak{g}_{\alpha_{ij}}=\mathbf{C}E_{ij}\,\),\(\,i\neq j\)\,.  
     Let \(\Pi=\{\alpha_i=\lambda_i-\lambda_{i+1};\,i=1,\cdots,r=n-1\,\}\subset \mathfrak{h}^{\ast}\) be the set of simple roots
and let  \(\{\alpha_i^{\vee}\,;\,i=1,\cdots,r\,\}\subset \mathfrak{h}\) be the set of simple coroots.   
It holds that
\[ \alpha_{ij}=\alpha_i+\cdots\,+\alpha_{j-1}\,, \mbox{ for } i<j\,,\quad \alpha_{ij}=-(\alpha_j+\cdots\,+\alpha_{i-1})\,, \mbox{ for } j<i\,,\]
The Cartan matrix \(A=(\,a_{ij}\,)_{i,j=1,\cdots,r}\) is given by \(a_{ij}=\left\langle \alpha_i^{\vee},\,\alpha_j \right\rangle\).      Fix a standard set of generators of \(\mathfrak{sl}(n,\mathbf{C})\):
      \[h_i=\alpha_i^{\vee}\,,\,x_i \in \mathfrak{g}_{\alpha_i}\,, \,y_i \in \mathfrak{g}_{-\alpha_i}\,,\quad 1\leq i\leq n-1\,,\]
 so that 
 \begin{equation}\label{slrelation}
 [\,x_i,\,y_j \,]=h_j\delta_{ij}\,, \,[\,h_i,\,x_j\,]=a_{ji}x_j\,, \,[\,h_i,\,y_j\,]=-a_{ji}y_j\,.
 \end{equation}
If \(a_{ij}=0\),  then \( [\,x_i,\,x_j \,]=0\) and  \([\,y_i,\,y_j \,]=0\).     We see that \( \{E_{ij}\,;\,1\leq i\neq \, j\leq n\} \) are  generated as follows;
\begin{eqnarray}\label{compose}
E_{ij}&=&\left [\,x_i\,,\,[x_{i+1},\,\cdots\, [x_{j-2},\,x_{j-1}]\,]\cdots\,\right ],\,\mbox{ for } i<j\,,\nonumber\\[0.2cm]
E_{ij}&=&\,\left [\,y_j\,,\,[y_{j+1},\,\cdots\, [y_{i-2},\,y_{i-1}]\,]\cdots\,\right ],\,\mbox{ for } i>j\,.
\end{eqnarray}
\begin{lemma}
The following relations hold in \(\mathfrak{gl}(n,\mathbf{H})\):
\begin{eqnarray*}
&& [\,x_i,\,Jy_j \,]=Jh_j\delta_{ij}\,, \,[\,h_i,\,Jx_j\,]=a_{ji}Jx_j\,, \,[\,h_i,\,Jy_j\,]=-a_{ji}Jy_j\,.\\[0.2cm]
 && [\,Jx_i,\,y_j \,]=-Jh_j\delta_{ij}\,, \,[\,Jh_i,\,x_j\,]=a_{ji}Jx_j\,, \,[\,Jh_i,\,y_j\,]=-a_{ji}Jy_j\,.\\[0.2cm]
 &&  [\,Jx_i,\,Jy_j \,]= - h_j\delta_{ij}\,, \,[\,Jh_i,\,Jx_j\,]=-a_{ji}x_j\,, \,[\,Jh_i,\,Jy_j\,]=-a_{ji}Jy_j\,.
 \end{eqnarray*}
\begin{eqnarray*}
[\,Jh_i\,,\,E_{jk}\,]\,&=&\,(a_{ij}+\cdots\,a_{i\,k-1})JE_{jk},\quad j<k \\[0.2cm]
[\,Jh_i\,,\,E_{jk}\,]\,&=&\,-(a_{ij}+\cdots\,a_{i\,k-1})JE_{jk},\quad k<j.
\end{eqnarray*}
\end{lemma}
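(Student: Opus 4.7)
The plan is to reduce every identity in the lemma to a one-line application of the bracket formula (\ref{rLbracket}) combined with the quaternion multiplication rules. Under the identification $\mathfrak{gl}(n,\mathbf{H})=\mathbf{H}\otimes_{\mathbf{C}}\mathfrak{gl}(n,\mathbf{C})$, I would write every ordinary generator as $X=1\otimes X$ and every $J$-twisted generator as $JX=j\otimes X$. For $z_1,z_2\in\{1,j\}$ the elements commute in $\mathbf{H}$, so
\[
[\,z_1\otimes X_1,\,z_2\otimes X_2\,]=(z_1z_2)\otimes X_1X_2-(z_2z_1)\otimes X_2X_1=(z_1z_2)\otimes [X_1,X_2].
\]
Reading off the factor $z_1z_2\in\{1,\,j,\,-1\}$ according to whether zero, one, or two of the entries carry a $J$, the nine identities of the first three blocks follow immediately from the Chevalley relations (\ref{slrelation}). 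For instance,
\[
[x_i,Jy_j]=j\otimes [x_i,y_j]=\delta_{ij}Jh_j,\qquad
[Jx_i,Jy_j]=(jj)\otimes [x_i,y_j]=-\delta_{ij}h_j,
\]
which is exactly why the $J$ disappears in the last block. All the other $h$–$x$ and $h$–$y$ relations are handled identically.

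For the final two identities giving $[Jh_i,E_{jk}]$, I would invoke (\ref{compose}) to observe that $E_{jk}$ is obtained as an iterated bracket of the $x_\ell$ (if $j<k$) or the $y_\ell$ (if $k<j$), hence $E_{jk}\in\mathfrak{sl}(n,\mathbf{C})$ corresponds to $1\otimes E_{jk}$. The same templated computation then gives
\[
[Jh_i,E_{jk}]=[j\otimes h_i,\,1\otimes E_{jk}]=j\otimes [h_i,E_{jk}]=\alpha_{jk}(h_i)\,JE_{jk}.
\]
It remains only to evaluate $\alpha_{jk}(h_i)=\langle\alpha_i^\vee,\alpha_{jk}\rangle$ using $\alpha_{jk}=\alpha_j+\cdots+\alpha_{k-1}$ for $j<k$ and $\alpha_{jk}=-(\alpha_k+\cdots+\alpha_{j-1})$ for $k<j$, together with $\langle\alpha_i^\vee,\alpha_\ell\rangle=a_{i\ell}$; this yields $a_{ij}+\cdots+a_{i\,k-1}$ in the first case and its negative in the second, matching the stated formulas.

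There is no real obstacle; the only work is bookkeeping to distribute the factor of $j$ correctly and to align the Cartan-matrix entries with the chosen index order. The point of the lemma, rather than its difficulty, is to record the precise action of the $J$-twisted Cartan elements on the root vectors, which will be needed in the next subsection to identify Chevalley-type generators of $\widehat{\mathfrak{sl}(n,\mathbf{H})}$ and to compute the root-space decomposition of $\widehat{\mathfrak{sl}}$.
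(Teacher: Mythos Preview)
Your approach is correct and is exactly the natural one: the paper states this lemma without proof, and the direct computation you outline via the bracket formula (\ref{rLbracket}) together with the Chevalley relations (\ref{slrelation}) is precisely how one verifies each identity. Your observation that $1$ and $j$ commute in $\mathbf{H}$, so that $[z_1\otimes X_1,z_2\otimes X_2]=(z_1z_2)\otimes[X_1,X_2]$ for $z_1,z_2\in\{1,j\}$, is the whole content of the argument; note this relies on the generators $x_i,y_i,h_i,E_{jk}$ being real matrices, which is implicit in the paper's use of (\ref{rLbracket}) ``for any bases $X,Y$''.

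One remark: applying your template uniformly actually reveals a couple of sign slips in the printed statement. For instance your method gives $[Jx_i,y_j]=j\otimes[x_i,y_j]=Jh_j\delta_{ij}$ rather than $-Jh_j\delta_{ij}$, and $[Jh_i,Jy_j]=(-1)\otimes[h_i,y_j]=a_{ji}y_j$ rather than $-a_{ji}Jy_j$. These are evidently typographical errors in the lemma as stated; your derivation is the correct one and is what is needed for Proposition \ref{generators}.
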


\begin{proposition}\label{generators}
The \(\mathbf{C}\)-module  \(\mathbf{H}\otimes_{\mathbf{C}} \mathfrak{sl}(n,\mathbf{C})=\mathfrak{sl}(n,\mathbf{C})+J\mathfrak{sl}(n,\mathbf{C})\) generates  the Lie algebra \( \mathfrak{sl}(n,\mathbf{H})\) over \(\mathbf{C}\).        The generators are given by 
\begin{equation}
h_i\,,\,x_i\,,\,y_i\,,\, Jh_i\,,\,Jx_i\,,\,Jy_i\,; \quad ( \,i =1,\cdots,n-1\,)\,.
\end{equation}
\end{proposition}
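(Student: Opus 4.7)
The plan is to produce each element of $\mathfrak{sl}(n,\mathbf{H})=\mathfrak{sk}(n,\mathbf{C})+J\mathfrak{gl}(n,\mathbf{C})$ as an iterated Lie bracket of the listed $6(n-1)$ generators, allowing $\mathbf{C}$-linear combinations inside each of the complex vector spaces $\mathfrak{sl}(n,\mathbf{C})$ and $J\mathfrak{sl}(n,\mathbf{C})$. Denote the resulting Lie subalgebra of $\mathfrak{gl}(n,\mathbf{H})$ by $\mathcal{L}$. The obvious iterated brackets only fill up $\mathfrak{sl}(n,\mathbf{C})+J\mathfrak{sl}(n,\mathbf{C})$, so the essential task is to generate the three missing real dimensions: the extra scalar $\sqrt{-1}\,I_n\in\mathfrak{sk}(n,\mathbf{C})$, and the two-real-dimensional complement of $J\mathfrak{sl}(n,\mathbf{C})$ inside $J\mathfrak{gl}(n,\mathbf{C})$, i.e.\ each $JE_{ii}$ up to the $Jh_i$ already in hand.

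\emph{Steps 1 and 2.} The inclusion $\mathfrak{sl}(n,\mathbf{C})\subset\mathcal{L}$ is the classical Serre-type fact recalled in (3.10): every $E_{jk}$ with $j\neq k$ arises as an iterated bracket of the $x_i$ or the $y_i$, and $\mathfrak{h}$ is spanned by $h_1,\dots,h_{n-1}$. Because $x_i,y_i,h_i$ have real entries, the bracket formula (2.22) specialises to $[JA,B]=J[A,B]$ for real $A,B$; thus repeating the iterated-bracket construction with one factor replaced by its $J$-image produces $JE_{jk}$ for every $j\neq k$. Together with the given $Jh_i$ this shows $J\mathfrak{sl}(n,\mathbf{C})\subset\mathcal{L}$.

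\emph{Step 3, the crux.} Using the $\mathbf{C}$-linear structure of each summand of $\mathfrak{sl}(n,\mathbf{C})+J\mathfrak{sl}(n,\mathbf{C})$, both $\sqrt{-1}\,x_i$ and $\sqrt{-1}\,Jx_i$ lie in $\mathcal{L}$. Substituting into (2.22) and using $\overline{x_i}=x_i$, $\overline{y_i}=y_i$ yields the two decisive computations
\begin{align*}
[\,\sqrt{-1}\,Jx_i\,,\,Jy_i\,]&= \sqrt{-1}\bigl(\overline{x_i}\,y_i+\overline{y_i}\,x_i\bigr)=\sqrt{-1}\bigl(E_{ii}+E_{i+1,i+1}\bigr),\\
[\,\sqrt{-1}\,x_i\,,\,Jy_i\,]&=J\bigl(-\sqrt{-1}(x_iy_i+y_ix_i)\bigr)=J\bigl(-\sqrt{-1}(E_{ii}+E_{i+1,i+1})\bigr).
\end{align*}
The first element lies in $\mathfrak{sk}(n,\mathbf{C})\setminus\mathfrak{sl}(n,\mathbf{C})$; combined with $\sqrt{-1}\,h_i=\sqrt{-1}(E_{ii}-E_{i+1,i+1})$ it yields $\sqrt{-1}\,E_{ii}$ for every $i$, hence in particular the central direction $\sqrt{-1}\,I_n$. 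The second lies in $J\mathfrak{gl}(n,\mathbf{C})\setminus J\mathfrak{sl}(n,\mathbf{C})$; a $\mathbf{C}$-linear combination with $Jh_i=J(E_{ii}-E_{i+1,i+1})$ produces $JE_{ii}$ for every $i$, hence all of $J\mathfrak{gl}(n,\mathbf{C})$.

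Putting the three steps together gives $\mathcal{L}\supset \mathfrak{sk}(n,\mathbf{C})+J\mathfrak{gl}(n,\mathbf{C})=\mathfrak{sl}(n,\mathbf{H})$; the reverse inclusion is immediate since $\mathfrak{sl}(n,\mathbf{H})$ is itself a quaternion Lie subalgebra of $\mathfrak{gl}(n,\mathbf{H})$ containing all the generators. The main obstacle is Step~3: every bracket of two elements of the trace-free subspace $\mathfrak{sl}(n,\mathbf{C})+J\mathfrak{sl}(n,\mathbf{C})$ is again trace-free in both slots, so at first sight one cannot reach the central direction $\sqrt{-1}\,I_n$ at all. The resolution is that the conjugate-linear action of $J$ in (2.22) converts $[\sqrt{-1}\,JX,JY]$ into a \emph{symmetric} expression in $X,Y$ rather than an antisymmetric one, producing the anticommutator $x_iy_i+y_ix_i=E_{ii}+E_{i+1,i+1}$, which carries nonzero trace. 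This symmetrisation effect, unavailable in either the purely real or purely complex setting, is the quaternionic phenomenon that makes the proposition work.
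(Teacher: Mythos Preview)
Your strategy coincides with the paper's: observe that $\mathfrak{sl}(n,\mathbf{C})+J\mathfrak{sl}(n,\mathbf{C})$ already sits inside $\mathcal{L}$, count the few missing real dimensions, and manufacture them by brackets that exploit the conjugate-linearity hidden in the bracket formula for $\mathfrak{gl}(n,\mathbf{H})$. The paper uses $[\sqrt{-1}Jh_1,Jh_2]\in\mathfrak{sk}(n,\mathbf{C})\setminus\mathfrak{sl}(n,\mathbf{C})$ and a bracket of type $[J(cE_{ni}),E_{in}]$ for the $J\mathfrak{gl}(n,\mathbf{C})$-part; you use $[\sqrt{-1}Jx_i,Jy_i]$ and $[\sqrt{-1}x_i,Jy_i]$ instead, which is the same mechanism with a different choice of test vectors.

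There is, however, a gap in the last sentence of your Step~3. Having produced $J\bigl(-\sqrt{-1}(E_{ii}+E_{i+1,i+1})\bigr)\in\mathcal{L}$, you assert that ``a $\mathbf{C}$-linear combination with $Jh_i$ produces $JE_{ii}$''. But $\mathcal{L}$ is only an $\mathbf{R}$-Lie subalgebra---the bracket is merely $\mathbf{R}$-bilinear---and the element you would need to multiply by $\sqrt{-1}$ lies \emph{outside} the starting $\mathbf{C}$-module $J\mathfrak{sl}(n,\mathbf{C})$; indeed the target $\mathfrak{sl}(n,\mathbf{H})$ is itself not stable under the ambient $\mathbf{C}$-action (e.g.\ $\sqrt{-1}I_n\in\mathfrak{sk}(n,\mathbf{C})$ but $-I_n\notin\mathfrak{sk}(n,\mathbf{C})$), so $\mathcal{L}$ cannot be $\mathbf{C}$-stable either. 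With only $\mathbf{R}$-linear operations, your two brackets together with $\sqrt{-1}h_i$ and $J\sqrt{-1}h_i$ yield $\sqrt{-1}E_{ii}$ and $J\sqrt{-1}E_{ii}$ for all $i$, but not $JE_{ii}$: the single real direction $JI_n$ is still missing. The repair is immediate---one further bracket of elements already in the generating set,
\[
\bigl[\,\sqrt{-1}\,h_i\,,\,J\sqrt{-1}\,h_i\,\bigr]
\;=\;J\bigl(2h_i^{\,2}\bigr)\;=\;2J\bigl(E_{ii}+E_{i+1,i+1}\bigr),
\]
supplies a \emph{real} multiple of $J(E_{ii}+E_{i+1,i+1})$, and now an $\mathbf{R}$-linear combination with $Jh_i$ gives $JE_{ii}$, completing your argument.
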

\begin{proof}~~~
Take the basis  \(B=\{\,h_i\,,\,1\leq i\leq n-1\,;\quad E_{j\,k}\,,\, 1\leq j\neq \,k\leq n,\,\}\) of  \(\,\mathfrak{sl}(n,\mathbf{C})\).   The subset \(\{B,\,JB\}\) of \(\,\mathfrak{sl}(n,\mathbf{C})+\,J\mathfrak{sl}(n,\mathbf{C})\)
augmented by the two elements \(\sqrt{-1}\,E_{2\,2}\) and \(J( E_{ii}+E_{nn})\) present a basis of  \(\mathfrak{sl}(n,\mathbf{H})=\mathfrak{sk}(n,\mathbf{C})+J\mathfrak{gl}(n,\mathbf{C})\).   So we show that these two elements are generated by \(B\).
In fact, we have  \(\,[\sqrt{-1}Jh_{1}\,,\,Jh_{2}\,]\,=\, 2\sqrt{-1}\,E_{2\,2}\in\,\mathfrak{sk}(n,\mathbf{C})\setminus \mathfrak{sl}(n,\mathbf{C})\).   
And, for any \(c\in\mathbf{C}\),  \([\,J(cE_{ni}),\,E_{in}\,]=\,cJ( E_{ii}+E_{nn})\).
\end{proof} 
 
\subsection{Quaternion  Lie algebra \(\widehat{ \mathfrak{sl}(n,\mathbf{H})}\) and its central extensions}

In the above we saw that \( \mathfrak{sl}(n,\mathbf{H})\) is generated as a Lie algebra by \(\mathbf{H}\otimes_{\mathbf{C}} \mathfrak{sl}(n,\mathbf{C})\).    
So that \(S^3 \mathfrak{sl}(n,\mathbf{H})\) is generated as a Lie algebra by \(S^3\mathbf{H}\otimes_{\mathbf{C}} \mathfrak{sl}(n,\mathbf{C})\).  \(S^3 \mathfrak{sl}(n,\mathbf{H})\) is a quaternion Lie algebra.
\begin{definition}~~~
The quaternion Lie subalgebra of \(\widehat{\mathfrak{gl}(n,\mathbf{H})}\) generated by \(\mathbf{C}[\phi^{\pm}]\otimes_{\mathbf{C}} \mathfrak{sl}(n,\mathbf{C})\) is called \(\mathfrak{sl}(n,\mathbf{H})\)-{\it current algebra } and is denoted by \(\widehat{\mathfrak{sl}(n,\mathbf{H})}\). 
\end{definition}

By the 2-cocycle \(\tilde{c}_k\), \(k=0,1,2\), of (\ref{trcocycle}) we have the central extension of \(\,\widehat{\mathfrak{sl}(n,\mathbf{H})}\):
 \begin{equation}
 \,\widehat{\mathfrak{sl}(n,\mathbf{H})}(a)=\widehat{\mathfrak{sl}(n,\mathbf{H})}\oplus
  (\oplus_k \mathbf{C}a_k)\,.\
 \end{equation}
 Further  we have the extension of \(\, \widehat{\mathfrak{sl}(n,\mathbf{H})}(a)\) by the derivation \(\mathbf{n}_0\) of  (\ref{radialder}): 
\begin{equation}
\widehat{\mathfrak{sl}}\,=\, \widehat{\mathfrak{sl}(n,\mathbf{H})}\oplus 
  (\oplus_k \mathbf{C}a_k)\oplus (\mathbf{C}\mathbf{n})\,.
\end{equation}
  \(\widehat{\mathfrak{sl}}\) is a quaternion Lie subalgebra of  \(\,\widehat{\mathfrak{gl}}=\widehat{\mathfrak{gl}(n,\mathbf{H})} \oplus   (\oplus_k \mathbf{C}a_k)\oplus  ( \mathbf{C}\mathbf{n})\,\). 

As a Lie subalgebras of  \(\,\widehat{\mathfrak{gl}}\,\), the Lie bracket of \(\widehat{\mathfrak{sl}}\) is given as follows:
 \begin{eqnarray*}
  [\,\phi \otimes X\, , \,\psi  \otimes Y\,]_{\widehat{\mathfrak{sl}}} &=& (\phi\,\psi)\otimes (XY)-(\psi\phi)\otimes (YX) +\, (X|Y)\, \sum_k \tilde{c}_k(\phi , \psi )\,a_k\,,  
\\[0,3cm] 
 [\,a_k\,, \phi\otimes X\,]_{\widehat{\mathfrak{sl}}}  =0\,, &\quad &\, [\,a_k,\,\mathbf{n}\,]_{\widehat{\mathfrak{sl}}} =0\,, \\[0.2cm]
  [\,\mathbf{n}\,, \phi \otimes X\,]_{\widehat{\mathfrak{sl}}}  &=&\mathbf{n}_0 \phi \otimes X\, ,
  \end{eqnarray*}
 for any bases  \(\,X,Y\in \mathfrak{sl}(n,\mathbf{C})\) and \(\phi,\,\psi\in \mathbf{C}[\phi^{\pm}]\).     
 Since 
\(
\phi ^{ +(0,0,1)} = \begin{pmatrix} 1 \\ 0 \end{pmatrix}
\) we identify a \(X\in  \mathfrak{sl}(n,\mathbf{C})\) with $\phi ^{+ (0,0,1)} \otimes X\in  \widehat{\mathfrak{sl}(n,\mathbf{H})}$.    Thus we look  
$ \mathfrak{sl}(n,\mathbf{C})$  as a Lie subalgebra of $\,\widehat{\mathfrak{sl}(n,\mathbf{H})}\,$: 
\begin{equation}
\left[\phi^{+(0,0,1)}\otimes X,\,\phi^{+(0,0,1)}\otimes Y\right]_{\widehat{\mathfrak{sl}(n,\mathbf{H})}} =\left[X,Y\right]_{\mathfrak{sl}(n,\mathbf{C})} \,,
\end{equation}
and we shall write \(\phi ^{+ (0,0,1)} \otimes X\) simply as \(X\).

\subsection{ Root space decomposition of \(\,\widehat{\mathfrak{sl}}\,\)}

  Let 
\begin{equation}
\widehat{\mathfrak{h}}\,=\,
 (\,(\, \mathbf{C}\,\phi ^{+(0, 0,1)}\,) \otimes_{\mathbf{C}}\mathfrak{h} )\,\oplus   (\oplus_k \mathbf{C}a_k)\oplus (\mathbf{C}\,\mathbf{n} )\,
 =
 \mathfrak{h}\oplus   (\oplus_k \mathbf{C}a_k)\oplus (\mathbf{C} \mathbf{n})\,,
\end{equation}
since \(\phi ^{+(0, 0,1)}=\left(\begin{array}{c}1\\0\end{array}\right)\).   
 We write \(\hat h=h+\sum s_ka_k+t \mathbf{n}\in \widehat{\mathfrak{h}}\) with \(h\in \mathfrak{h}\) and \(s_k,\,t\in\mathbf{C}\).     
 Then the adjoint actions of \(\hat h= h+\sum s_ka_k+t \mathbf{n} \in\widehat{\mathfrak{h}}\,\) on \(\,\xi=\phi\otimes X+\sum p_ja_j+q \mathbf{n}\in\widehat{\mathfrak{sl}}\) is written as follows:  
\begin{equation*}
ad(\hat h)(\xi)=\,ad(\hat h)\,(\phi\otimes X+\sum p_ja_j+q \mathbf{n})=\phi\otimes ( hX-Xh) +  t 
\mathbf{n}_0\phi\otimes X\,.
\end{equation*}

An element $\lambda$ of the dual space $\mathfrak{h}^* $ of $\mathfrak{h} $ is regarded as a  element
of $\,\widehat{\mathfrak{h}}^{\,\ast}$ by putting
\begin{align}
\left\langle \,\lambda , a_k\, \right\rangle= 
\left\langle \,\lambda , \mathbf{n} \,\right\rangle = 0,\quad k=0,1,2\,.
\end{align}
So  $\Delta \subset \mathfrak{h}^*$ is seen to be a subset of $\,\widehat{\mathfrak{h}}^{\,*}$.    
We define  $\nu\,,\,\Lambda_k\, \in \widehat{\mathfrak{h}}^{\,*}$, \( k=0,1,2\),  by
\begin{align}
\left\langle\nu , \alpha _i ^{\vee} \,\right\rangle &= \left\langle\,\Lambda_k , \alpha _i ^{\vee} \,\right\rangle = 0,   \nonumber\\[0.2cm] 
 \left\langle\,\nu , a_k\,\right\rangle &= 0\,,  \qquad \left\langle\,\nu , \mathbf{n}\,\right\rangle = 1\,,\\[0.2cm]
 \left\langle\,\Lambda_k , a_k\,\right\rangle &= 1\,,  \qquad \left\langle\,\Lambda_k , \mathbf{n}\,\right\rangle = 0\,, \quad
 1 \leqq i \leqq  r,\, \,k=0,1,2\,.\nonumber
\end{align}  
Then  the set  \(\{\,\alpha _1,\cdots,\alpha_r , \,\Lambda_0,\Lambda_1,\Lambda_2 ,\,\nu \,\} \)  forms a basis of  $\widehat{\mathfrak{h}}^{\,*}$.    

  Since \(\widehat{\mathfrak{h}}\) is a commutative subalgebra of \(\widehat{\mathfrak{sl}}\,\), 
 \(\,\widehat{\mathfrak{sl}}\) is decomposed into a direct sum of the simultaneous eigenspaces of \(ad\,(\hat h)\), \(\,\hat h\in \widehat{\mathfrak{h}}\,\).      
 
 For \(\lambda=\alpha+k_0\nu\in \widehat{\mathfrak{h}} ^{\,\ast}\), \(\alpha=\sum_{i=1}^r\,k_i\alpha_i\in\Delta\),  \(k_i\in \mathbf{Z},\,i=0,1,\cdots,r\), we put,
\begin{equation}
\widehat{\mathfrak{g}}_{\lambda}=\left\{\xi\in \widehat{\mathfrak{sl}}\,;\quad \, [\,\hat h,\,\xi\,]_{\widehat{\mathfrak{sl}}}\,=\,\langle \lambda, \hat h\rangle\,\xi\quad\mbox{ for }\, \forall\hat h\in \widehat{\mathfrak{h}}\,\right\}.\end{equation}
\(\lambda\)  is a root of \(\,\widehat{\mathfrak{sl}}\,\)  if \(\, \widehat{\mathfrak{g}}_{\lambda}\neq 0\).     \(\,\widehat{\mathfrak{g}}_{\lambda}\) is called the root space of  \(\lambda\,\).   

Let  \(\widehat{\Delta}\) denote the set of roots of  the representation \(\left(\widehat{\mathfrak{sl}}\,,ad(\widehat{\mathfrak{h}})\right)\).  

\begin{theorem}   
\begin{enumerate}
\item
\begin{eqnarray}
\widehat{\Delta} &=& \left\{ \frac{m}{2} \,\nu + \alpha\,;\quad \alpha \in \Delta\,,\,m\in\mathbf{Z}\,\right\}\nonumber \\[0.2cm]
&& \bigcup \left\{ \frac{m}{2} \,\nu\, ;\quad  m\in \mathbf{Z} \, \right\}  \,.
\end{eqnarray}
\item
For \(\alpha\in \Delta\), \(\alpha\neq 0\) and \(m\in \mathbf{Z}\), we have 
 \begin{equation}
\widehat{ \mathfrak{g}}_{\frac{m}{2}\nu+ \alpha}\,=\mathbf{C}[\phi ^{\pm};\,m\,] \otimes_{\mathbf{C}} \mathfrak{g} _{ \alpha}\,.
 \end{equation}
 \item
 \begin{eqnarray}  
  \widehat{ \mathfrak{g}}_{0\nu}&=&\,\widehat{\mathfrak{h}}\,, 
\\[0.2cm]
 \widehat{ \mathfrak{g}}_{\frac{m}{2}\nu}&=&  \,\mathbf{C}[\phi^{\pm};\,m\,]  \otimes_{\mathbf{C}} \widehat{\mathfrak{h}}\,, \,, \quad\mbox{for  \(0\neq  m\in\mathbf{Z} \) . }\,
 \end{eqnarray}
  \item
 \(\widehat{ \mathfrak{sl}}\) has the following decomposition:
\begin{equation}
\widehat{ \mathfrak{sl}}\,=\, \bigoplus_{m\in\mathbf{Z} }\, \widehat{ \mathfrak{g}}_{\frac{m}{2}\nu}\,\bigoplus\,\,\bigoplus_{\alpha\in \Delta,\, m\in\mathbf{Z} }\, 
\widehat{ \mathfrak{g}}_{\frac{m}{2}\nu+\alpha}\,
\end{equation}
\end{enumerate}
\end{theorem}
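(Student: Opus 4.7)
The plan is to exhibit each basis vector of $\widehat{\mathfrak{sl}}$ as a simultaneous eigenvector of $ad(\widehat{\mathfrak{h}})$ and read off the decomposition. Write a generic Cartan element as $\hat h = h + \sum_k s_k a_k + t\mathbf{n}$ with $h \in \mathfrak{h}$. Using the bracket relations of $\widehat{\mathfrak{sl}}$---in particular $[a_k,\cdot]=0$, $[\mathbf{n},\phi\otimes X] = (\mathbf{n}_0\phi)\otimes X$, and $[\mathbf{n},a_k]=0$---only $h$ and $t\mathbf{n}$ act nontrivially. Since $\phi^{+(0,0,1)}$ corresponds to the constant quaternion $1$, the identities $\phi^{+(0,0,1)}\cdot\phi = \phi\cdot\phi^{+(0,0,1)} = \phi$ hold, and $\Theta_k\phi^{+(0,0,1)}=0$ (as $\theta_k$ annihilates constants) gives $\tilde c_k(\phi^{+(0,0,1)},\phi)=0$. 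Combining these,
\begin{equation*}
ad(\hat h)(\phi\otimes X) = \phi\otimes[h,X] + t(\mathbf{n}_0\phi)\otimes X.
\end{equation*}

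Now take $X\in\mathfrak{g}_\alpha$ and $\phi\in\mathbf{C}[\phi^{\pm};m]$. Then $[h,X]=\alpha(h)X$, and by Proposition \ref{typepreserve} one has $\mathbf{n}_0\phi = \tfrac{m}{2}\phi$, so
\begin{equation*}
ad(\hat h)(\phi\otimes X) = \bigl(\alpha(h) + \tfrac{m}{2}t\bigr)\phi\otimes X = \bigl\langle\tfrac{m}{2}\nu+\alpha,\hat h\bigr\rangle\,\phi\otimes X,
\end{equation*}
using the pairings $\langle\nu,\mathfrak{h}\rangle=\langle\nu,a_k\rangle=\langle\alpha,a_k\rangle=\langle\alpha,\mathbf{n}\rangle=0$ and $\langle\nu,\mathbf{n}\rangle=1$. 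Likewise $a_k$ and $\mathbf{n}$ lie in $\ker\,ad(\widehat{\mathfrak{h}})\subset\widehat{\mathfrak{g}}_{0\nu}$. This gives the inclusions $\mathbf{C}[\phi^{\pm};m]\otimes\mathfrak{g}_\alpha\subset\widehat{\mathfrak{g}}_{\frac{m}{2}\nu+\alpha}$ for $\alpha\neq 0$, $\mathbf{C}[\phi^{\pm};m]\otimes\mathfrak{h}\subset\widehat{\mathfrak{g}}_{\frac{m}{2}\nu}$ for $m\neq 0$, and $\widehat{\mathfrak{h}}\subset\widehat{\mathfrak{g}}_{0\nu}$.

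For the reverse inclusions and the direct-sum assembly I would combine the $\mathbf{n}_0$-eigendecomposition $\mathbf{C}[\phi^{\pm}] = \bigoplus_{m\in\mathbf{Z}}\mathbf{C}[\phi^{\pm};m]$ of Section 2.7 with the standard root space decomposition $\mathfrak{sl}(n,\mathbf{C}) = \mathfrak{h}\oplus\bigoplus_{\alpha\in\Delta}\mathfrak{g}_\alpha$. Tensoring yields a weight decomposition of $\mathbf{C}[\phi^{\pm}]\otimes\mathfrak{sl}(n,\mathbf{C})$, and appending $\oplus_k\mathbf{C}a_k\oplus\mathbf{C}\mathbf{n}$ to the weight-zero piece completes the decomposition of $\widehat{\mathfrak{sl}}$. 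Since eigenvectors for distinct eigenvalues of the commuting family $ad(\widehat{\mathfrak{h}})$ are linearly independent, the sum is direct, the weight-space identities of (ii)--(iii) follow, and (i) is just the list of weights that actually occur.

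The main obstacle will be controlling the closure of $\widehat{\mathfrak{sl}(n,\mathbf{H})}$: since it is defined as the Lie subalgebra generated by $\mathbf{C}[\phi^{\pm}]\otimes_{\mathbf{C}}\mathfrak{sl}(n,\mathbf{C})$, one must check that iterated brackets do not produce components outside $\mathbf{C}[\phi^{\pm}]\otimes\mathfrak{sl}(n,\mathbf{C})$ modulo the centre. Decomposing the bracket as $\tfrac12(\phi\psi+\psi\phi)\otimes[X,Y] + \tfrac12(\phi\psi-\psi\phi)\otimes(XY+YX)$ isolates the dangerous anticommutator piece containing $XY+YX\notin\mathfrak{sl}(n,\mathbf{C})$; one must show that this contribution is reabsorbed into $\mathbf{C}[\phi^{\pm}]\otimes\mathfrak{sl}(n,\mathbf{C})$ either by a direct calculation exploiting the structure of spinor multiplication on $S^3\mathbf{H}$, or by invoking Proposition \ref{generators} to realise all the $\mathfrak{sk}(n,\mathbf{C})$- and $J\mathfrak{gl}(n,\mathbf{C})$-valued elements arising from Lie-algebraic combinations of generators in $\mathfrak{sl}(n,\mathbf{C})+J\mathfrak{sl}(n,\mathbf{C})$. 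Once that closure is in place, the weight computation above delivers the theorem.
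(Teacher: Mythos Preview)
Your weight computation for $ad(\hat h)(\phi\otimes X)$ is exactly what the paper does, and your forward inclusions match line for line. For the reverse inclusions the paper argues element-wise: it writes a hypothetical weight vector as $\xi=\phi\otimes X+\sum p_ka_k+q\mathbf{n}$, decomposes $\phi=\sum_n\phi_n$ by homogeneous degree, and compares the two expressions for $[\hat h,\xi]$ to force $p_k=q=0$, $\phi_n=0$ for $n\neq m$, and $X\in\mathfrak{g}_\alpha$. Your route via the tensor product of the two eigenspace decompositions is equivalent and arguably cleaner.

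Where you and the paper genuinely diverge is on the closure issue you raise at the end. You are right that the bracket $(\phi\psi)\otimes XY-(\psi\phi)\otimes YX$ contains the anticommutator piece $\tfrac12(\phi\psi-\psi\phi)\otimes(XY+YX)$, and that $XY+YX$ need not lie in $\mathfrak{sl}(n,\mathbf{C})$; so the subalgebra \emph{generated} by $\mathbf{C}[\phi^\pm]\otimes\mathfrak{sl}(n,\mathbf{C})$ is not obviously equal to $\mathbf{C}[\phi^\pm]\otimes\mathfrak{sl}(n,\mathbf{C})$ as a vector space. The paper's proof does not address this: in the converse step it simply writes an arbitrary element of $\widehat{\mathfrak{sl}}$ in the form $\phi\otimes X+\sum p_ka_k+q\mathbf{n}$ with $X\in\mathfrak{sl}(n,\mathbf{C})$, tacitly assuming the closure you are worried about. (Note also the discrepancy between the introduction, which gives $\widehat{\mathfrak{g}}_{0\nu}=(\mathbf{C}[\phi^\pm;0]\otimes\mathfrak{h})\oplus(\oplus_k\mathbf{C}a_k)\oplus\mathbf{C}\mathbf{n}\supset\widehat{\mathfrak{h}}$, and the theorem, which asserts $\widehat{\mathfrak{g}}_{0\nu}=\widehat{\mathfrak{h}}$.) So your caution is well placed: the obstacle you identify is real, and the paper's proof offers no resolution of it either; your suggestion to lean on Proposition~\ref{generators} is the natural move, though it would still need to be carried out.
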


\begin{proof} ~~~ First we prove the second assertion.      
Let \(X\in\mathfrak{g}_{\alpha}\) for a \(\alpha\in \Delta\), \(\alpha\neq 0\), and let  \(\varphi\in \mathbf{C}[\phi^{\pm};\,m]\)  for a \(m\in \mathbf{Z}\).     We have, for any \(h\in\mathfrak{h}\), 
\begin{eqnarray*}
[\,\phi ^{+ (0,0,1)} \otimes h , \,\varphi \otimes X\,] _{ \widehat{\mathfrak{sl}}}&=&
 \varphi \otimes (hX-Xh) \,
= \left\langle \alpha , h\right\rangle \varphi \otimes X,
\\[0.2cm]
[\,\mathbf{n}, \,\varphi \otimes X\,]_{ \widehat{\mathfrak{sl}}}&=& \frac{m}{2} \varphi\otimes X,
\end{eqnarray*}
that is, for every \(\hat{h} \in \widehat{\mathfrak{h}}\), we have 
\begin{equation*}
[\,\hat h\,, \varphi \otimes X]_{ \widehat{\mathfrak{sl}}} = \left\langle \frac{m}{2} \delta + \alpha \,, \,\hat h\,\right\rangle (\varphi\otimes X)\,.
\end{equation*}
Therefore \(\varphi\otimes X\in \widehat{\mathfrak{g}}_{\frac{m}{2}\nu+ \alpha}\). 

Conversely,  for a given \(m\in\mathbf{Z}\)  and  a \(\xi\in \widehat{\mathfrak{g}}_{\frac{m}{2}\nu+ \alpha}\), we shall show that \(\xi\)  has the form  \(\,\phi\otimes X\,\) with \(\phi\in \mathbf{C}[\phi^{\pm};m]\,\) and \(X\in \mathfrak{g}_ \alpha\,\) .   Let 
\(\xi=\phi\otimes X+\sum_{k\in \mathbf{Z}}p_k a_k+ q\mathbf{n}\,\) for \(\phi\in \mathbf{C}[\phi^{\pm}]\), \(\,X\in \mathfrak{sl}(n,\mathbf{C})\) and \(p_k,\,q\in\mathbf{C}\).     \(\phi\) is  decomposed to the sum  
\[\phi=\sum_{n\in \mathbf{Z}}\,\phi_n\]
by the homogeneous degree; \(\phi_n\in \mathbf{C}[\phi^{\pm};n]\).
 We have 
\begin{eqnarray*}
&&[\hat h,\xi]_{ \widehat{\mathfrak{sl}}} =[\,\phi^{+(0,0,1)}\otimes h+ \sum s_ka_k+t\mathbf{n}\,, \,\phi \otimes X\,+\sum\, p_k a_k+q \mathbf{n}\,]_{ \widehat{\mathfrak{sl}}} 
 =\,\phi\otimes [\,h\,,\,X\,]\\[0.2cm]
 &&\qquad + \,t (\,\sum_{n\in \mathbf{Z}} \,\frac{n}{2} \phi_n  \,\otimes X\,)
\end{eqnarray*}
for any \(\hat h=\phi^{+(0,0,1)}\otimes h+\sum s_ka_k+t\mathbf{n}\in \widehat{\mathfrak{h}}\,\).
From the assumption we have 
\begin{eqnarray*}
 [\,\hat h,\xi\,]_{ \widehat{\mathfrak{sl}}} \,&=&\,\langle\, \frac{m}{2}\delta+ \alpha\,,\,\hat h\,\rangle\, \xi\,\\[0.2cm]
&=& < \alpha,\,h>\phi\otimes X\, +(\frac{m}{2}t+< \alpha,\,h>)(\sum p_k a_k+q\mathbf{n})\,\\[0.2cm]
&& \quad+\,
\frac{m}{2}t\,
  (\sum_{n} \,\phi_{n} )\otimes X .
\end{eqnarray*}
Comparing the above two equations we have \(p_k=q=0\), and $\phi_n = 0$ for all \(n\) 
except for $n = m$.      Therefore    $ \phi \in\mathbf{C}[\phi^{\pm};m]$.   We also have  $[\hat h , \xi]_{ \widehat{\mathfrak{sl}}}  =\phi\otimes [h,X]= \langle \alpha ,\,h\rangle \,\phi \otimes X$ for all \(\hat h=\phi^{+(0,0,1)}\otimes h+\sum s_ka_k+td \in\widehat{\mathfrak{h}}\).     Hence  $X \in \mathfrak{g}_{ \alpha}$ and 
  \(\xi= \phi_m\otimes X \in \widehat{\mathfrak{g}}_{\frac{m}{2}\nu+ \alpha}\,\).   We have proved
  \begin{equation*}
  \widehat{\mathfrak{g}}_{\frac{m}{2}\nu+\alpha}=\mathbf{C}[\phi^{\pm};m] \otimes_{\mathbf{C}} \mathfrak{g}_{ \alpha}\,.
 \end{equation*}
 The proof of the third assertion is also carried out by the same argument as above if  we revise it for the case \(\alpha=0\) .      The above discussion yields the first and the fourth assertions.
  \end{proof}

\begin{proposition}~~~
We have the following relations: 
\begin{enumerate}
\item
\begin{equation}
\left[\, \widehat{\mathfrak{g}}_{\frac{m}{2}\nu+\alpha}\,,\, \widehat{\mathfrak{g}}_{\frac{n}{2}\nu+\beta}\,\right ]_{ \widehat{\mathfrak{sl}}}\,\subset \,
 \widehat{\mathfrak{g}}_{\frac{m+n}{2}\nu+\alpha+\beta}\,\,,
 \end{equation}
 for \(\alpha,\,\beta \in \widehat{\Delta}\) and for  \(m,n\in\mathbf{Z}\).
\item
\begin{equation}
\left[\, \widehat{\mathfrak{g}}_{\frac{m}{2}\nu}\,,\, \widehat{\mathfrak{g}}_{\frac{n}{2}\nu}\,\right ]_{\widehat{\mathfrak{sl}}}\,\subset \,
 \widehat{\mathfrak{g}}_{\frac{m+n}{2}\nu}\,, 
 \end{equation}
  for  \(m,n\in\mathbf{Z}\).
\end{enumerate}
\end{proposition}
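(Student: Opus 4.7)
Both containments are instances of a single general fact: if $\mathfrak{a}$ is a commutative subalgebra of a Lie algebra $\mathfrak{g}$ and $\mathfrak{g}_\lambda$ denotes the simultaneous eigenspace of $\mathrm{ad}(\mathfrak{a})$ with eigenvalue $\lambda\in\mathfrak{a}^\ast$, then $[\mathfrak{g}_\lambda,\mathfrak{g}_\mu]\subset \mathfrak{g}_{\lambda+\mu}$. My plan is to deduce this directly from the Jacobi identity, which has already been verified for the bracket $[\,\cdot\,,\,\cdot\,]_{\widehat{\mathfrak{gl}}}$ in the preceding theorem and therefore restricts to the Lie subalgebra $\widehat{\mathfrak{sl}}$. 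The commutativity of $\widehat{\mathfrak{h}}$ has also been noted, so all the ingredients are in place.

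Concretely, for part (1) I would fix $\xi_1\in \widehat{\mathfrak{g}}_{\frac{m}{2}\nu+\alpha}$ and $\xi_2\in \widehat{\mathfrak{g}}_{\frac{n}{2}\nu+\beta}$ and, for an arbitrary $\hat h\in\widehat{\mathfrak{h}}$, use the defining eigenspace relations
\begin{equation*}
[\hat h,\xi_1]_{\widehat{\mathfrak{sl}}}=\langle\tfrac{m}{2}\nu+\alpha,\,\hat h\rangle\,\xi_1,\qquad
[\hat h,\xi_2]_{\widehat{\mathfrak{sl}}}=\langle\tfrac{n}{2}\nu+\beta,\,\hat h\rangle\,\xi_2
\end{equation*}
provided by the previous theorem. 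Applying the Jacobi identity to $\hat h,\xi_1,\xi_2$ in $\widehat{\mathfrak{sl}}$ then gives
\begin{equation*}
[\hat h,[\xi_1,\xi_2]]_{\widehat{\mathfrak{sl}}}=[[\hat h,\xi_1],\xi_2]_{\widehat{\mathfrak{sl}}}+[\xi_1,[\hat h,\xi_2]]_{\widehat{\mathfrak{sl}}}=\bigl\langle\tfrac{m+n}{2}\nu+\alpha+\beta,\,\hat h\bigr\rangle\,[\xi_1,\xi_2]_{\widehat{\mathfrak{sl}}}.
\end{equation*}
Since this holds for every $\hat h\in\widehat{\mathfrak{h}}$, I can conclude $[\xi_1,\xi_2]\in\widehat{\mathfrak{g}}_{\frac{m+n}{2}\nu+\alpha+\beta}$. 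Part (2) follows by the same argument with $\alpha=\beta=0$; the case $m+n=0$ is consistent because $\widehat{\mathfrak{g}}_{0\nu}=\widehat{\mathfrak{h}}$ comfortably accommodates the central and derivation contributions $(X\vert Y)\sum_k\tilde c_k(\phi,\psi)\,a_k$ produced by the explicit bracket formula.

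There is no serious obstacle to this plan; the Jacobi identity was secured in the construction of $\widehat{\mathfrak{gl}}$ and descends to the Lie subalgebra $\widehat{\mathfrak{sl}}$, while the weight description of each eigenspace is precisely what the preceding theorem supplies. The only minor point worth double-checking is that the naive expansion $(\phi\cdot\psi)\otimes(XY)-(\psi\cdot\phi)\otimes(YX)+(X\vert Y)\sum_k\tilde c_k(\phi,\psi)a_k$ genuinely stays inside $\widehat{\mathfrak{sl}}$ even when the matrix products $XY,\,YX$ leave $\mathfrak{sl}(n,\mathbf{C})$; this is automatic because $\widehat{\mathfrak{sl}}$ was defined as a Lie subalgebra (so it is closed under brackets by construction), and the eigenspace computation above then pins down the exact weight component.
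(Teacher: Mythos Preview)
Your proposal is correct and is exactly the standard argument the paper has in mind: the paper's own proof consists of the single sentence ``The assertions are proved by a standard argument using the properties of Lie bracket,'' and your Jacobi-identity computation is precisely that argument spelled out.
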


The assertions are proved by a standard argument using the properties of Lie bracket.

 \subsection{ Chevalley generators of \(\widehat{\mathfrak{sl}}\)}

By the natural embedding of \(\mathfrak{sl}(n,\mathbf{C})\) in \(\widehat{\mathfrak{sl}}\) we have the vectors 
\begin{eqnarray}
\widehat h_i&=&\phi^{+(0,0,1)}\otimes h_i\,\in \widehat{\mathfrak{h}},\,\nonumber\\[0.2cm]
\widehat x_i&=&\phi^{+(0,0,1)}\otimes x_i\,\in \widehat{\mathfrak{g}}_{0\nu+\alpha_i},\quad \widehat y_i=\phi^{+(0,0,1)}\otimes y_i\,\in \widehat{\mathfrak{g}}_{0\nu-\alpha_i},\qquad i=1,\cdots,r=n-1\,.
\nonumber\end{eqnarray}
Then 
\begin{eqnarray}
\left[\widehat x_i\,,\widehat y_j\,\right]_{ \widehat{\mathfrak{sl}}} &=&\,\delta_{ij}\,\widehat h_i\,,\nonumber\\[0.2cm]
\left[\widehat h_i\,,\widehat x_j\,\right ]_{ \widehat{\mathfrak{sl}}}&=&\,a_{ij}\,\widehat x_j,\quad 
\left[\widehat h_i\,,\widehat y_j\,\right]_{ \widehat{\mathfrak{sl}}} =\,- a_{ij}\,\widehat y_j\,,\quad 1\leq i,j\leq r .
\end{eqnarray}
We have obtained a part of generators of \(\widehat{\mathfrak{sl}}\) that come naturally from \(\mathfrak{sl}(n,\mathbf{C})\).   We want to augment these generators to the Chevalley generators of  \(\widehat{\mathfrak{sl}}\).    
We take the following set of generators of  the algebra \(\mathbf{C}[\phi^{\pm}]\vert\,S^3\):
\begin{eqnarray}
I&=\phi^{ +(0,0,1)}=\left(\begin{array}{c}1\\0\end{array}\right),\,\qquad 
J&=\phi^{ +(0,0,0)}=\left(\begin{array}{c}0\\-1\end{array}\right)\,,\nonumber
\\[0.2cm]
  \kappa&=\phi^{+(1,0,1)}\,=\,\left(\begin{array}{c} z_2\\-\overline{z}_1\end{array}\right),\qquad
 \lambda&= \,\phi^{ -(0,0,0)}=\left(\begin{array}{c} z_2\\ \overline{z}_1\end{array}\right).
\end{eqnarray}

  We put
\begin{eqnarray*}
\kappa_{\ast}&=&\,\frac{-\sqrt{-1}}{\sqrt{2}}\phi^{+(1,1,2)}+\frac{\sqrt{-1}}{2}(\phi^{-(0,0,0)}-\phi^{+(1,0,1)})
\,=\,\sqrt{-1}\left(\begin{array}{c}  \overline z_2 \\ \overline z_1\end{array}\right)\, \\[0.2cm]
\lambda_{\ast}&=&\,\frac{-\sqrt{-1}}{\sqrt{2}}\phi^{+(1,1,2)}-\,\frac{\sqrt{-1}}{2}(\phi^{-(0,0,0)}-\phi^{+(1,0,1)})
\,=\,\sqrt{-1}\left(\begin{array}{c}  \overline z_2 \\ -\overline z_1\end{array}\right)\, 
\end{eqnarray*}

\begin{lemma}~~
\begin{enumerate}
\item
\begin{equation}\kappa\,\in \mathbf{C}[\phi^{\pm};1]\,,\qquad\,\lambda\,\in \mathbf{C}[\phi^{\pm};-3]\,.
\end{equation}
\item
\begin{eqnarray}
\,c_0(\kappa,\kappa_{\ast})\,&=-1\,,\quad c_1(\kappa,\kappa_{\ast})=c_2(\kappa,\kappa_{\ast})=0,\\[0.2cm]
c_0(\lambda,\lambda_{\ast})&=-1\,,\quad \,c_1(\lambda,\lambda_{\ast})=c_2(\lambda,\lambda_{\ast})=0\,.
\end{eqnarray}
\end{enumerate}
\end{lemma}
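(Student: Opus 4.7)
The plan is to reduce this to explicit, finite computations using the formulas already available in the paper; there is no conceptual obstacle, only the bookkeeping of conjugates.

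For part (1), I would invoke Proposition \ref{typepreserve}, which records that $\mathbf{n}_0\phi^{+(m,l,k)}=\frac{m}{2}\phi^{+(m,l,k)}$ and $\mathbf{n}_0\phi^{-(m,l,k)}=-\frac{m+3}{2}\phi^{-(m,l,k)}$. Applied to $\kappa=\phi^{+(1,0,1)}$ this gives eigenvalue $1/2$, placing $\kappa\in\mathbf{C}[\phi^{\pm};1]$; applied to $\lambda=\phi^{-(0,0,0)}$ it gives eigenvalue $-3/2$, placing $\lambda\in\mathbf{C}[\phi^{\pm};-3]$. One line each.

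For part (2), I would compute each integrand pointwise on $S^3$ and then integrate. The ingredients are: the explicit action of the basic vector fields $\theta_0=\sqrt{-1}\theta$, $\theta_1=e_++e_-$, $\theta_2=\sqrt{-1}(e_+-e_-)$ on the monomials $z_1,z_2,\bar z_1,\bar z_2$; the definition $\Theta_k\phi=\frac{1}{2}(\theta_ku,\theta_kv)^{\mathrm T}$; and the spinor product $\phi_1\cdot\phi_2=(u_1u_2-\bar v_1 v_2,\,v_1u_2+\bar u_1v_2)^{\mathrm T}$. For instance, $\theta_0 z_2=\sqrt{-1}z_2$ and $\theta_0\bar z_1=-\sqrt{-1}\bar z_1$ give $\Theta_0\kappa=\tfrac12(\sqrt{-1}z_2,\sqrt{-1}\bar z_1)^{\mathrm T}$; multiplying by $\kappa_\ast=\sqrt{-1}(\bar z_2,\bar z_1)^{\mathrm T}$ the first component collapses to $-\tfrac12(|z_1|^2+|z_2|^2)=-\tfrac12$ on $S^3$ while the second vanishes identically, giving $\Theta_0\kappa\cdot\kappa_\ast=-\tfrac12\phi^{+(0,0,1)}$, whose trace is $-1$, so $c_0(\kappa,\kappa_\ast)=\frac{1}{2\pi^2}\int_{S^3}(-1)\,d\sigma=-1$.

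I would then repeat this template for the remaining five cases. For $c_1$ and $c_2$ evaluated on $(\kappa,\kappa_\ast)$: applying $\theta_1$ and $\theta_2$ to the entries of $\kappa$ and computing $\Theta_k\kappa\cdot\kappa_\ast$ yields, after reducing to $|z_1|^2+|z_2|^2=1$, products of the form $(0,\,*)^{\mathrm T}$ (the first component cancels identically by the antisymmetric combinations $\bar z_1\bar z_2-\bar z_2\bar z_1$), so the trace is zero and the integrals vanish. The three cases for $\lambda$ and $\lambda_\ast$ are handled identically, using $\theta_0\bar z_1=-\sqrt{-1}\bar z_1$ and $\lambda_\ast=\sqrt{-1}(\bar z_2,-\bar z_1)^{\mathrm T}$, giving again $\Theta_0\lambda\cdot\lambda_\ast=-\tfrac12\phi^{+(0,0,1)}$ on $S^3$ and the vanishing of the $k=1,2$ integrands.

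The only care needed, and the sole source of potential sign errors, is conjugating $v_1$ and $u_1$ correctly inside the multiplication rule; since every $\Theta_k\kappa$ and $\Theta_k\lambda$ has components that are simple monomials of degree one in $z$ or $\bar z$, each pointwise product is a constant combination of $|z_1|^2$, $|z_2|^2$, $\bar z_1\bar z_2$, and $\bar z_2\bar z_1$, which on $S^3$ simplify to $-1/2$, $0$, or $\pm\sqrt{-1}/2$, collapsing each integrand to a constant scalar multiple of $\phi^{+(0,0,1)}$ or to a purely imaginary traceless vector. No $S^3$-integration beyond $\int_{S^3}d\sigma=2\pi^2$ is required, so the lemma follows immediately from the six tabulated calculations.
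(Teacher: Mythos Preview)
Your proposal is correct and follows essentially the same approach the paper takes: the paper states this lemma without a separate proof, but the computation $(\Theta_0\kappa)\cdot\kappa_\ast=-\tfrac12\phi^{+(0,0,1)}$ and the values $c_k(\kappa,\kappa_\ast)$ are already carried out explicitly in Example~2 following Proposition~\ref{2cocycle}, by exactly the pointwise method you describe. Your write-up simply completes the remaining tabulations for $\theta_1,\theta_2$ and for the pair $(\lambda,\lambda_\ast)$, and invokes Proposition~\ref{typepreserve} for part~(1), which is precisely what is intended.
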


Let \(\theta=\alpha_1+\cdots+\alpha_{l-1}\) be the highest root of \(\mathfrak{sl}(n,\mathbf{C})\) and suppose that \(x_\theta\in\mathfrak{g}_{\theta}\) and \(y_\theta\in\mathfrak{g}_{-\theta}\) satisfy the relations \([x_\theta\,,\,y_{\theta}]\,=\,h_\theta\) and \((x_\theta\vert y_\theta)=1\).    We introduce the following vectors of \(\widehat{\mathfrak{sl}}\);
\begin{align}
 \widehat y_{J}&=J\otimes y_{\theta}\,\in \widehat{\mathfrak{g}}_{0\nu-\theta}\,,\quad &
  \widehat x_{J}&=(-J)\otimes x_{\theta} \,\in \widehat{\mathfrak{g}}_{0\nu+\theta}\,,\\[0.2cm]
   \widehat y_{\kappa}&=\kappa\otimes y_{\theta}\,\in \widehat{\mathfrak{g}}_{\frac12\nu-\theta}\,,\quad &
  \widehat x_{\kappa}&=\kappa_{\ast} \otimes x_{\theta} \,\in \widehat{\mathfrak{g}}_{-\frac32\nu+\theta}\oplus \widehat{\mathfrak{g}}_{\frac12\nu+\theta}\,,\\[0.2cm]
  \widehat y_{\lambda}&=\lambda \otimes y_{\theta}\,\in \widehat{\mathfrak{g}}_{-\frac32\nu-\theta}\,,\quad &
  \widehat x_{\lambda}&=\lambda_{\ast}\otimes x_{\theta} \,\in \widehat{\mathfrak{g}}_{-\frac{3}{2}\nu+\theta}\oplus \widehat{\mathfrak{g}}_{\frac12\nu+\theta}\,.
\end{align}
Then we have the generators of \(\widehat{\mathfrak{sl}(n,\mathbf{H})}(a)\)  that are given by the following three triples:
\begin{eqnarray}
&& \left(\,\widehat{x}_i,\widehat{y}_i,\widehat{h}_i \right) \quad  i=1,2,\cdots,r,\nonumber\\[0.2cm]
&&\left( \widehat{x}_{\lambda}, \widehat{y}_{\lambda},\widehat h_{\theta}\right),\quad 
\left( \widehat{x}_{\kappa}, \widehat{y}_{\kappa},\widehat h_{\theta}\,\right),\quad
 \,\left( \widehat{x}_{J}, \widehat{y}_{J},\widehat h_{\theta}\right)\, \,.
\end{eqnarray}
These three triples satisfy the following relations.
\begin{proposition}~~~
\begin{enumerate}
\item
\begin{equation}
\left[\,\widehat x_{\pi}\,,\,\widehat y_i\,\right]_{\widehat{\mathfrak{sl}}}=\,
\left[\,\widehat y_{\pi}\,,\,\widehat x_i\,\right]_{\widehat{\mathfrak{sl}}} =0\,,\quad\mbox{for } \,1\leq i\leq r ,\,\mbox{ and }\, \pi=J,\,\kappa,\,\lambda\,.\label{i}
\end{equation}
\item
\begin{equation}
\left[\,\widehat x_{J}\,,\,\widehat y_J\,\right]_{\widehat{\mathfrak{sl}}}=\,\widehat  h_\theta\,,
\end{equation}
\item
\begin{equation}\quad
\left[\,\widehat x_{\lambda}\,,\,\widehat y_{\lambda}\,\right]_{\widehat{\mathfrak{sl}}}=\sqrt{-1}\,\widehat  h_\theta-a_0 ,\quad
\left[\,\widehat x_{\kappa}\,,\,\widehat y_{\kappa}\,\right]_{\widehat{\mathfrak{sl}}}=\sqrt{-1}\,\widehat  h_\theta\,-a_0\,.
\end{equation}
\end{enumerate}
\end{proposition}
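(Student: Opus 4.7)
The plan is to verify all three identities by direct computation in $\widehat{\mathfrak{sl}}$, using the bracket formula
\[
[\phi \otimes X,\, \psi \otimes Y]_{\widehat{\mathfrak{sl}}} = (\phi \cdot \psi) \otimes XY - (\psi \cdot \phi) \otimes YX + (X \vert Y)\sum_{k=0}^{2} \tilde c_k(\phi, \psi)\, a_k,
\]
the explicit spinor-product rule $\phi_1 \cdot \phi_2 = (u_1 u_2 - \overline v_1 v_2,\; v_1 u_2 + \overline u_1 v_2)^T$, the cocycle values supplied in the preceding lemma, and the $\mathfrak{sl}_n$-relations $[x_\theta, y_\theta] = h_\theta$ and $(x_\theta \vert y_\theta) = 1$. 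The first step is to tabulate the elementary spinor products on $S^3$ that recur throughout: namely $J \cdot (-J)$, $(-J) \cdot J$, and for each $\pi \in \{J, \kappa, \lambda\}$, the products $\pi \cdot I$, $I \cdot \pi$, $\pi_* \cdot I$, $I \cdot \pi_*$, $\pi \cdot \pi_*$, $\pi_* \cdot \pi$. Each is pointwise polynomial, and several collapse after imposing $|z_1|^2 + |z_2|^2 = 1$; one expects, for example, $(-J) \cdot J = I$ and $\kappa \cdot \kappa_* = \sqrt{-1}\, I$.

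For claim (1), substitute $\widehat x_\pi = \pi_* \otimes x_\theta$ and $\widehat y_i = I \otimes y_i$ into the bracket formula. Using the table, the noncentral part reduces to an expression built from the finite-algebra bracket $[x_\theta, y_i]$, which is controlled by the highest-root property of $\theta$ in $\mathfrak{sl}_n$; the Killing-form orthogonality $(x_\theta \vert y_i) = 0$ (valid for $n \geq 3$) together with $\Theta_k I = 0$ (because $I$ is constant, so $\theta_k I = 0$) kills the candidate central contribution. The dual identity $[\widehat y_\pi, \widehat x_i] = 0$ is handled by the symmetric calculation. For claim (2), using $(-J) \cdot J = J \cdot (-J) = I$ and the antisymmetry $c_k(-J, J) = -c_k(J, J) = 0$, the bracket collapses to $I \otimes [x_\theta, y_\theta] = \widehat h_\theta$.

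For claim (3) the same scheme applies. The noncentral part of $[\widehat x_\kappa, \widehat y_\kappa] = [\kappa_* \otimes x_\theta,\, \kappa \otimes y_\theta]$ simplifies, after applying the tabulated products together with $x_\theta y_\theta - y_\theta x_\theta = h_\theta$, to $\sqrt{-1}\, I \otimes h_\theta = \sqrt{-1}\, \widehat h_\theta$, while the central contribution evaluates to $(x_\theta \vert y_\theta) \sum_k c_k(\kappa_*, \kappa)\, a_k = -a_0$ via the tabulated values $c_0(\kappa, \kappa_*) = -1$, $c_1(\kappa, \kappa_*) = c_2(\kappa, \kappa_*) = 0$. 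The $\lambda$-case is structurally identical. I expect the main obstacle to be the combinatorial bookkeeping in the preliminary table: since $\kappa_*$ and $\lambda_*$ are inhomogeneous in the $\mathbf{n}_0$-degree and carry $\sqrt{-1}$-factors, each product must be expanded term by term and then reduced on $S^3$, with careful sign tracking. Once that table is correct, the three identities follow by direct assembly.
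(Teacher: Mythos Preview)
Your overall strategy (direct computation with the bracket formula) is the natural one and matches what the paper implicitly relies on, but two of your key computational claims are false, and they are not just bookkeeping slips---they are exactly the places where the argument has to do real work.

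\textbf{Part (1).} You assert that the noncentral contribution is governed by $[x_\theta,y_i]$ and that this vanishes by the ``highest-root property.'' The highest-root property tells you that $\theta+\alpha_i\notin\Delta$, hence $[x_\theta,x_i]=0$ and dually $[y_\theta,y_i]=0$. It says nothing about $[x_\theta,y_i]\in\mathfrak{g}_{\theta-\alpha_i}$: for $\mathfrak{sl}(n,\mathbf C)$ with $x_\theta=E_{1n}$ and $y_i=E_{i+1,i}$ one gets $[x_\theta,y_i]=\delta_{i,n-1}E_{1,n-1}-\delta_{i,1}E_{2,n}$, which is nonzero for $i=1$ and $i=n-1$. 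So the reduction you describe does not produce zero, and your argument for (1) collapses at precisely those indices. (The Killing-form argument $(x_\theta\vert y_i)=0$ and $\Theta_k I=0$ is fine for killing the central term, but that is not where the difficulty lies.)

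\textbf{Part (3).} You need both $\kappa_*\cdot\kappa$ and $\kappa\cdot\kappa_*$ to collapse to $\sqrt{-1}\,I$ in order to factor $x_\theta y_\theta - y_\theta x_\theta = h_\theta$; otherwise, since $x_\theta y_\theta=E_{11}$ and $y_\theta x_\theta=E_{nn}$ are linearly independent, the noncentral part cannot be rewritten as $(\text{spinor})\otimes h_\theta$. But while $\kappa\cdot\kappa_*=\sqrt{-1}\,I$ on $S^3$, a direct computation gives
\[
\kappa_*\cdot\kappa=\sqrt{-1}\bigl(|z_2|^2-|z_1|^2\bigr)\;+\;j\bigl(2\sqrt{-1}\,\bar z_1 z_2\bigr),
\]
which is \emph{not} constant on $S^3$. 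So the simplification you claim does not occur, and the same obstruction arises for $\lambda_*\cdot\lambda$. There is also a sign to watch in the central term: antisymmetry gives $c_0(\kappa_*,\kappa)=-c_0(\kappa,\kappa_*)=+1$, so plugging the lemma's value in without the flip yields the wrong sign. In short, the ``preliminary table'' you intend to build will not have the clean entries you are anticipating, and the assembly step cannot proceed as outlined.
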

Adding \(\mathbf{n}\) to these generators of \(\widehat{\mathfrak{sl}(n,\mathbf{H})}(a)\) we obtain the Chevalley generators of 
\(\widehat{\mathfrak{sl}}\).

\end{document}